\newcommand{\suchthat}{\;\ifnum\currentgrouptype=16 \middle\fi|\;}
\numberwithin{equation}{section}
\newtheorem{thm}{Theorem}
\numberwithin{thm}{section}
\newaliascnt{lemma}{thm}
\newtheorem{lem}[lemma]{Lemma}
\newaliascnt{proposition}{thm}
\newtheorem{prop}[proposition]{Proposition}
\newaliascnt{corollary}{thm}
\newtheorem{corollary}[corollary]{Corollary}
\newaliascnt{definition}{thm}
\newtheorem{mydef}[definition]{Definition}
\newaliascnt{remark}{thm}
\newtheorem{remark}[remark]{Remark}
\begin{document}
\title{Local existence of Strong solutions for a fluid-structure interaction model}

\date{\today}

\author{Sourav Mitra}
\thanks{{Acknowledgments}: The author wishes to thank the ANR project ANR-15-CE40-0010 IFSMACS as well as the Indo-French Centre for Applied Mathematics (IFCAM) for the funding provided during this work.}
\address{Sourav Mitra, Institut de Mathématiques de Toulouse; UMR5219; Université de Toulouse; CNRS; UPS IMT, F-31062 Toulouse Cedex 9, France.}
\email{Sourav.Mitra@math.univ-toulouse.fr}

	\begin{abstract}
		We are interested in studying a system coupling the compressible Navier-Stokes equations with an elastic structure located at the boundary of the fluid domain. Initially the fluid domain is rectangular and the beam is located on the upper side of the rectangle. The elastic structure is modeled by an Euler-Bernoulli damped beam equation. We prove the local in time existence of strong solutions for that coupled system. 
		\end{abstract}
\maketitle	
		{\bf Key words:} Fluid-structure interaction model, Compressible fluid, Euler-Bernoulli damped beam, local existence.
	\smallskip\\
	\noindent{\bf{AMS subject classifications}.} 35R37, 35D35, 35B65, 74D99, 74F10, 76N10, 93C20.
\section{Introduction}
\subsection{Statement of the problem}
 Our objective is to study a fluid structure interaction problem in a 2d channel. The fluid flow here is modeled by the compressible Navier-Stokes equations. Concerning the structure we will consider an Euler-Bernoulli damped beam located on a portion of the boundary. As remarked in \cite{gavalos1}, such dynamical models arise in the study of many engineering systems (e.g., aircraft, bridges etc). In the present article we establish a result on the local in time existence of strong solutions of such a fluid structure interaction problem. To the best of our knowledge, this is the first article dealing with the existence of local in time strong solutions for the complete non-linear model considered here.\\
  We consider data and solutions which are periodic in the \textquoteleft channel direction\textquoteright\, (with period $L,$ where $L>0$ is a constant). Here $L$-periodicity of a function $f$ (defined on $\mathbb{R}$) means that $f(x+L)=f(x)$ for all $x\in \mathbb{R}.$\\ 
We now define a few notations. Let $\Omega$ be the domain $\mathbb{T}_{L}\times (0,1)\subset {\mathbb{R}}^{2},$ where $\mathbb{T}_{L}$ is the one dimensional torus identified with $(0,L)$ with periodic conditions. The boundary of $\Omega$ is denoted by $\Gamma$. We set 
       \begin{equation}\nonumber
       \begin{array}{l}
       \Gamma_{s}=\mathbb{T}_{L}\times \{1\},
       \quad
       \Gamma_{\ell}=\mathbb{T}_{L}\times\{0\},\quad\Gamma=\Gamma_{s}\cup\Gamma_{\ell}.
       \end{array}
       \end{equation}     
Now for a given function $$\eta : \Gamma_{s}\times (0,\infty)\rightarrow (-1,\infty),$$ which will correspond to the displacement of the one dimensional beam, let us denote by $\Omega_{t}$ and $\Gamma_{s,t}$ the following sets
\begin{equation}\nonumber
\begin{array}{ll}
\Omega_{t}=\{(x,y) \suchthat x\in (0,L),\quad 0<y<1+\eta(x,t)\}&=\,\mbox{domain of the fluid at time $t$},
\vspace{1.mm}\\
\Gamma_{s,t}=\{(x,y)\suchthat x\in (0,L),\quad y=1+\eta(x,t)\}&=\,\mbox{the beam at time $t$}.
\end{array}
\end{equation}    
 The reference configuration of the beam is $\Gamma_{s},$ and we set
\begin{equation}\label{setnot}
	\begin{array}{ll}
	\Sigma_{T}=\Gamma\times (0,T),&\quad\Sigma^{s}_{T}=\Gamma_{s}\times (0,T),\\ \widetilde{\Sigma^{s}_{T}}=\cup_{t\in (0,T)}\Gamma_{s,t}\times \{t\},&\quad\Sigma^{\ell}_{T}=\Gamma_{\ell}\times (0,T),\\  Q_{T}=\Omega\times (0,T),&\quad \widetilde{Q_{T}}=\cup_{t\in(0,T)} \Omega_{t}\times \{t\}.
	\end{array}
    \end{equation}  
     \begin{figure}[h!]
     	\centering
     	\begin{tikzpicture}[scale=0.75]
     	\coordinate (O) at (0,5);
     	\coordinate (A) at (10,5);
     	\coordinate (B) at (6.5,5);
     	\draw (5,0)node [below] {$\Gamma_{\ell}$};
     	\draw (0,0)node [below] {$0$};
     	\draw (10,0)node [below right] {$L$};
     	\draw (0,5)node [above] {$1$};
     	\draw (3.2,5.2) node [above right] {$\eta(x,t)$};
     	\draw (5,5) node [below] {$\Gamma_{s}$};
     	\draw (0,0) -- (10,0);
     	\draw (10,0) -- (10,5);
     	\draw[dashed] (10,5) -- (0,5);
     	\draw (0,5) -- (0,0);
     	\draw[color=blue, ultra thick] (O) to [bend left=30] (B);
     	\draw[color=blue, ultra thick] (B)  to [bend right=30] (A);
     	\draw[<->, color=red, thick] (3.2,5) -- (3.2,6);
     	\end{tikzpicture}
     	\caption{Domain $\Omega_{t}$.}
     \end{figure} \\ 
   We consider a fluid with density $\rho$ and velocity ${\bf u}.$ The fluid structure interaction system coupling the compressible Navier-Stokes and the Euler-Bernoulli damped beam equation is modeled by
   \begin{equation}\label{1.1}
   \left\{
   \begin{array}{ll}
   \rho_{t}+\mbox{div}(\rho {\bf u})=0\quad &\mbox{in} \quad \widetilde{Q_{T}},
   \vspace{1.mm}\\
   (\rho {\bf u}_{t}+\rho({\bf u}.\nabla){\bf u})-(2\mu \mbox{div} (D({\bf u}))+\mu{'}\nabla\mbox{div}{\bf u}) +\nabla p(\rho) =0\quad &\mbox{in} \quad \widetilde{Q_T},
   \vspace{1.mm}\\
   {\bf u}(\cdot,t)=(0,\eta_{t})\quad & \mbox{on}\quad \widetilde{\Sigma^{s}_{T}},
   \vspace{1.mm}\\
   {\bf u}(\cdot,t)=(0,0)\quad &\mbox{on}\quad \Sigma^{\ell}_{T},
   \vspace{1.mm}\\
   {\bf u}(\cdot,0)={\bf u}_{0}\quad& \mbox{in} \quad \Omega_{\eta(0)}=\Omega,
   \vspace{1.mm}\\
   \rho(\cdot,0)=\rho_{0}\quad &\mbox{in}\quad \Omega_{\eta(0)}=\Omega,
   \vspace{1.mm}\\
   \eta_{tt}-\beta \eta_{xx}-  \delta\eta_{txx}+\alpha\eta_{xxxx}=(T_{f})_{2} \quad& \mbox{on}\quad \Sigma^{s}_{T},
   \vspace{1.mm}\\
   \eta(\cdot,0)=0\quad \mbox{and}\quad \eta_{t}(\cdot,0)=\eta_{1}\quad &\mbox{in}\quad \Gamma_{s}.
   \end{array} \right.
   \end{equation} 
 The initial condition for the density is assumed to be positive and bounded. We fix the positive constants $m$ and $M$ such that 
 \begin{equation}\label{cor0}
 \begin{array}{l}
 0<m=\underset{\overline{\Omega}}{\min} \rho_{0}(x,y),\quad M=\underset{\overline{\Omega}}{\max} \rho_{0}(x,y).
 \end{array}
 \end{equation}
 In our model the fluid adheres to the plate and is viscous. This implies that the velocities corresponding to the fluid and the structure coincide at the interface and hence the condition \eqref{1.1}$_{3}$ holds.
    In the system \eqref{1.1}, $D({\bf u})=\frac{1}{2}(\nabla {\bf u}+\nabla^{T} {\bf u})$ is the symmetric part of the gradient and the real constants $\mu,$ $\mu{'}$ are the Lam\'{e} coefficients which are supposed to satisfy
    $$\mu>0,\quad {\mu}{'}\geqslant 0.$$ 
    In our case the fluid is isentropic i.e. the pressure $p(\rho)$ is only a function of the fluid density $\rho$ and is given by
    $$p(\rho)=a\rho^{\gamma},$$
    where $a>0$ and $\gamma>1$ are positive constants.\\ 
    We assume that there exists a constant external force ${p_{ext}}>0$ which acts on the beam. The external force ${p_{ext}}$ can be written as follows
    $${p_{ext}}=a\overline{\rho}^{\gamma},$$
    for some positive constant $\overline{\rho}.$\\
    To incorporate this external forcing term ${p_{ext}}$ into the system of equations \eqref{1.1}, we introduce the following 
    \begin{equation}\label{1.2}
    P(\rho)=p(\rho)-{p_{ext}}=a\rho^{\gamma}-a\overline{\rho}^{\gamma}.
    \end{equation}
    Since $\nabla p(\rho)=\nabla P(\rho),$ from now onwards we will use $\nabla P(\rho)$ instead of $\nabla p(\rho)$ in the equation \eqref{1.1}$_{2}.$\\
    In the beam equation the constants, $\alpha>0,$ $\beta\geqslant0$ and $\delta>0$ are respectively the adimensional rigidity, stretching and friction coefficients of the beam. The non-homogeneous source term of the beam equation $(T_{f})_{2}$ is the net surface force on the structure which is the resultant of force exerted by the fluid on the structure and the external force ${p_{ext}}$ and it is assumed to be of the following form
\begin{equation}\label{1.3}
(T_{f})_{2}=([-2\mu D({\bf u})-\mu{'}(\mbox{div}{\bf u}){\bf I}_{d}]\cdot {{\bf n}_{t}}+P{{\bf n}_{t}})\mid_{\Gamma_{s,t}}\sqrt{1+\eta^{2}_{x}}\cdot \vec{e}_{2}\quad\mbox{on}\quad \Sigma^{s}_{T},
\end{equation}
where ${\bf I}_{d}$ is the identity matrix, ${\bf n}_{t}$ is the outward unit normal to $\Gamma_{s,t}$ given by
$${\bf n}_{t}=-\frac{\eta_{x}}{\sqrt{1+\eta^{2}_{x}}}\vec{e}_{1}+\frac{1}{\sqrt{1+\eta^{2}_{x}}}\vec{e}_{2}$$
($\vec{e}_{1}=(1,0)$ and $\vec{e}_{2}=(0,1)$).\\
Observe that $(\rho,{\bf u},\eta)=(\overline{\rho},0,0)$ is a stationary solution to \eqref{1.1}-\eqref{1.2}-\eqref{1.3}.
\begin{remark}
	Now we can formally derive a priori estimates for the system \eqref{1.1}-\eqref{1.2}-\eqref{1.3} and show the following energy equality 
	\begin{equation}\label{1.15l}
	\begin{split}
	&\frac{1}{2}\frac{d}{dt}\left(\int\limits_{\Omega_{t}}\rho|{\bf u}|^{2}\,dx\right)+\frac{d}{dt}\left(\int\limits_{\Omega_{t}}\frac{a}{(\gamma-1)}\rho^{\gamma}\,dx \right) +\frac{1}{2}\frac{d}{dt}\left(\int\limits_{0}^{L}|\eta_{t}|^{2}\,dx\right)
	+\frac{\beta}{2}\frac{d}{dt}\left(\int\limits_{0}^{L}|\eta_{x}|^{2}\,dx\right)\\[1.mm]
	& +\frac{\alpha}{2}\frac{d}{dt}\left(\int\limits_{0}^{L}|\eta_{xx}|^{2}\,dx\right)
	+2\mu\int\limits_{\Omega_{t}}| D{\bf u}|^{2}\,dx+\mu'\int\limits_{\Omega_{t}}|\mathrm{div}{\bf u}|^{2}\,dx+\delta\int\limits_{0}^{L}|\eta_{tx}|^{2}\,dx =-{p_{ext}}\int\limits_{\Gamma_{s}}\eta_{t}.
	\end{split}
	\end{equation}
The equality \eqref{1.15l} underlines the physical interpretation of each coefficient and in particular of the viscosity coefficients, $\mu,$ $\mu'$ and $\delta$.
\end{remark}
   \begin{remark}\label{eta0}
   	Observe that in \eqref{1.1} we have considered the initial displacement $\eta(0)$ of the beam to be zero. This is because we prove the local existence of strong solution of the system \eqref{1.1} with the beam displacement $\eta$ close to the steady state zero. There are several examples in the literature where the authors consider the initial displacement of the structure (in a fluid-structure interaction problem) to be equal to zero. For instance the readers can look into the articles \cite{kukavica} and \cite{boukir}. We also refer to the article \cite{veiga} where the initial displacement of the structure is non zero but is considered to be suitably small. The issues involving the existence of strong solution for the model \eqref{1.1} but with a non zero initial displacement $\eta(0)$ of the beam is open. The case of a system coupling the incompressible Navier-Stokes equations and an Euler-Bernoulli damped beam with a non zero initial beam displacement is addressed in \cite{casanova}.  
   \end{remark}
 Our interest is to prove the local in time existence of a strong solution to system \eqref{1.1}-\eqref{1.2}-\eqref{1.3} i.e we prove that given a prescribed initial datum $(\rho_{0},{\bf u}_{0},\eta_{1}),$ there exists a solution of system \eqref{1.1}-\eqref{1.2}-\eqref{1.3} with a certain Sobolev regularity in some time interval $(0,T),$ provided that the time $T$ is small enough.\\ 
 We study the system \eqref{1.1}-\eqref{1.2}-\eqref{1.3} by transforming it into the reference cylindrical domain $Q_{T}.$ This is done by defining a diffeomorphism from $\Omega_{t}$ onto $\Omega.$ We adapt the diffeomorphism used in \cite{veiga} in the study of an incompressible fluid-structure interaction model. The reader can also look at \cite{raymondbeam}, \cite{grand} where the authors use a similar map in the context of a coupled fluid-structure model comprising an incompressible fluid. 
 \subsection{Transformation of the problem to a fixed domain}\label{transfixdm}
 To transform the system \eqref{1.1}-\eqref{1.2}-\eqref{1.3} in the reference configuration, for $\eta$ satisfying
 $
 1+\eta(x,t)>0$ for all $(x,t)\in\Sigma^{s}_{T},
$
  we introduce the following change of variables
 \begin{equation}\label{1.14}
 \begin{array}{l}
 \displaystyle
 {\Phi}_{\eta(t)}:\Omega_{t}\longrightarrow \Omega\quad\mbox{defined by}\quad {\Phi}_{\eta(t)}(x,y)=(x,z)=\left(x,\frac{y}{1+\eta(x,t)}\right),\\
 \displaystyle
 {\Phi}_{\eta}:\widetilde{{Q}_{T}}\longrightarrow Q_{T}\quad\mbox{defined by}\quad {\Phi}_{\eta}(x,y,t)=(x,z,t)=\left(x,\frac{y}{1+\eta(x,t)},t\right).
 \end{array} 
 \end{equation}
 \begin{remark}
 	It is easy to prove that for each $t\in[0,T),$ the map ${\Phi}_{\eta(t)}$ is a $C^{1}-$ diffeomophism from $\Omega_{t}$ onto $\Omega$ provided that $(1+\eta(x,t))>0$ for all $x\in \mathbb{T}_L$ and that $\eta(\cdot,t)\in C^{1}(\Gamma_{s}).$
 \end{remark}
 Notice that since $\eta(\cdot,0)=0,$ ${\Phi}_{\eta(0)}$ is just the identity map. We set the following notations
 \begin{equation}\label{1.15}
 \begin{array}{l}
 \widehat{\rho}(x,z,t)=\rho(\Phi^{-1}_{\eta}(x,z,t)),\,\,\widehat{{\bf u}}(x,z,t)=(\widehat{u}_{1},\widehat{u}_{2})={\bf u}(\Phi^{-1}_{\eta}(x,z,t)).\\
 \end{array}
 \end{equation}
 After transformation and using the fact that $\widehat{{ u}}_{1,x}=0$ on $\Sigma^{s}_{T}$ (since $\widehat {\bf u}=\eta_{t}\vec{e_{2}}$ on $\Sigma^{s}_{T}$) the nonlinear system  \eqref{1.1}-\eqref{1.2}-\eqref{1.3} is rewritten in the following form
 \begin{equation}\label{1.16}
 \left\{
 \begin{array}{ll}
 \widehat{\rho_{t}}+\begin{bmatrix}
 \widehat {u}_{1}\\
 \frac{1}{(1+\eta)}(\widehat {u}_{2}-\eta_{t}z-\widehat{u}_{1}z\eta_{x})
 \end{bmatrix}
 \cdot \nabla\widehat{\rho}+\widehat{\rho}\mbox{div}{\widehat{\bf u}}={F}_{1}(\widehat{\rho},\widehat{\bf u},\eta)\quad& \mbox{in}\quad Q_{T},
 \vspace{1.mm}\\
 \widehat\rho \widehat {\bf u}_{t}-\mu\Delta \widehat {\bf u}-(\mu'+\mu)\nabla(\mbox{div}\widehat {\bf u})+ \nabla  P(\widehat{\rho}) ={F}_{2}(\widehat \rho,\widehat {\bf u},\eta)\quad &\mbox{in} \quad {Q}_{T},
 \vspace{1.mm}\\
 \widehat{\bf u}=\eta_{t}\vec{e_{2}}\quad& \mbox{on}\quad \Sigma^{s}_{T},\\[1.mm]
 \widehat {\bf u}(\cdot,t)=0\quad& \mbox{on}\quad \Sigma^{\ell}_{T},
 \vspace{1.mm}\\
 \widehat {\bf u}(\cdot,0)={\bf u}_{0}\quad& \mbox{in} \quad \Omega,
 \vspace{1.mm}\\
 \widehat{\rho}(\cdot,0)={\rho_{0}}\quad& \mbox{in}\quad \Omega,
 \vspace{1.mm}\\
 \eta_{tt}-\beta \eta_{xx}-  \delta\eta_{txx}+\alpha\eta_{xxxx}=F_{3}(\widehat{\rho},\widehat{\bf u},{\eta}) \quad& \mbox{on}\quad \Sigma^{s}_{T},
 \vspace{1.mm}\\
 \eta(0)=0\quad \mbox{and}\quad \eta_{t}(0)=\eta_{1}\quad &\mbox{in}\quad \Gamma_{s},
 \end{array} \right.
 \end{equation}
 where 
 \begin{equation}\label{F123}
 \begin{aligned}
 \displaystyle
 {F}_{1}(\widehat {\rho},\widehat {\bf u},\eta)=  &  \frac{1}{(1+\eta)}(\widehat { u}_{1,z}z\eta_{x}\widehat\rho+\eta\widehat{\rho}\widehat{u}_{2,z}),\\
 {F}_{2}(\widehat \rho,\widehat {\bf u},\eta)=  & -\eta\widehat{\rho}\widehat {\bf u}_{t}+z\widehat\rho\widehat {\bf u}_{z}\eta_{t}-\eta\widehat \rho\widehat {u}_{1}\widehat {\bf u}_{x}+\widehat { u}_{1}\widehat {\bf u}_{z}\eta_{x}\widehat \rho z+\mu \big(\eta\widehat {\bf u}_{xx}-\frac{\eta \widehat {\bf u}_{zz}}{(1+\eta)}-2\eta_{x}z\widehat {\bf u}_{zx}+\frac{\widehat {\bf u}_{zz}z^{2}\eta^{2}_{x}}{(1+\eta)}\\
 &  -\widehat {\bf u}_{z}\big( \frac{(1+\eta)z\eta_{xx}-2\eta_{x}^{2}z}{(1+\eta)}\big)\big)-\widehat\rho(\widehat {\bf u}.\nabla)\widehat {\bf u}+(\mu+\mu')\\
 & \displaystyle \cdot\begin{bmatrix}
 \eta\widehat {u}_{1,xx}-\widehat {u}_{1,xz}z\eta_{x}-\eta_{x}z\big(\widehat { u}_{1,zx}-\frac{\widehat {u}_{1,zz}z\eta_{x}}{(1+\eta)}\big)+\widehat { u}_{1,z}\big(\frac{(1+\eta)z\eta_{xx}-2\eta^{2}_{x}z}{(1+\eta)}\big) -\frac{\eta_{x}\widehat {u}_{2,z}}{(1+\eta)}-\frac{\eta_{x}z\widehat { u}_{2,zz}}{(1+\eta)}\\[2.mm]
 -\frac{\eta_{x}\widehat {u}_{1,z}}{(1+\eta)}-\frac{\eta_{x}z\widehat { u}_{1,zz}}{(1+\eta)}-\frac{\eta\widehat {u}_{2,zz}}{(1+\eta)}
 \end{bmatrix}\\
 &  -(\eta P_{x}(\widehat{\rho})-P_{z}(\widehat{\rho})z\eta_{x})\vec{e_{1}},\\
 \pagebreak
 F_{3}(\widehat{\rho},\widehat {\bf u},\eta)=  &  -\mu\big(-{\widehat{ u}_{2,z}}+\eta_{x}\widehat {u}_{2,x}+\frac{\widehat { u}_{2,z}}{(1+\eta)}\eta^{2}_{x}z-\frac{2\eta\widehat { u}_{2,z}}{(1+\eta)}-\frac{\eta_{x}\widehat {u}_{1,z}}{(1+\eta)}\big)
 -\mu'\big(-2\widehat{u}_{2,z}+\frac{\widehat { u}_{1,z}}{(1+\eta)}\eta_{x}z\\
 &-\frac{\eta\widehat {u}_{2,z}}{(1+\eta)}\big)+P(\widehat{\rho}).&
 \end{aligned}
 \end{equation}
 The transport equation for density \eqref{1.16}$_{1}$-\eqref{1.16}$_{6}$ is of the form
 \begin{equation}\label{1.17}
 \left\{
 \begin{array}{ll}
 \widehat{\rho_{t}}+\begin{bmatrix}
 \widehat {u}_{1}\\
 \frac{1}{(1+\eta)}(\widehat {u}_{2}-\eta_{t}z-\widehat{u}_{1}z\eta_{x})
 \end{bmatrix}
 \cdot \nabla\widehat{\rho}+\widehat{\rho}\mbox{div}{\widehat{\bf u}}={F}_{1}\quad& \mbox{in}\quad Q_{T},
 \vspace{1.mm}\\
 \widehat{\rho}(\cdot,0)={\rho_{0}}\quad& \mbox{in}\quad \Omega.
 \end{array} \right.
 \end{equation}
 Due to the interface condition, $\widehat {\bf u}=\eta_{t}\vec{e_{2}}$ on $\Sigma^{s}_{T},$ we get that the velocity field $(
 \widehat {u}_{1},
 \frac{1}{(1+\eta)}(\widehat {u}_{2}-\eta_{t}z-\widehat{u}_{1}z\eta_{x})
 )$ satisfies
 $$\begin{bmatrix}
 \widehat {u}_{1}\\
 \frac{1}{(1+\eta)}(\widehat {u}_{2}-\eta_{t}z-\widehat{u}_{1}z\eta_{x})
 \end{bmatrix}\cdot {\bf n}=0\quad \mbox{on}\quad \Sigma^{s}_{T},$$
 where ${\bf n}$ is the unit outward normal to $\Omega.$ 
 Hence we shall not prescribe any boundary condition on the density for the system \eqref{1.17} to be well posed.\\
To avoid working in domains which deform when time evolves, the meaning of solutions for \eqref{1.1}-\eqref{1.2}-\eqref{1.3} will be understood as follows: The triplet $(\rho,{\bf u},\eta)$ solves \eqref{1.1}-\eqref{1.2}-\eqref{1.3} if and only if $(\widehat{\rho},\widehat{\bf u},\eta)$ solves \eqref{1.16}. This notion will be detailed in the next section.   
\subsection{Functional settings and the main result}
In the fixed domain $\Omega$ we have the following spaces of functions with values in $\mathbb{R}^{2},$
$${\bf H}^{s}(\Omega)=H^{s}(\Omega;\mathbb{R}^{2})\quad\mbox{for all}\quad s\geqslant 0.$$
We also introduce the following spaces of vector valued functions 
\begin{equation}\label{fntlsp}
\begin{array}{l}
{\bf H}^{1}_{0}(\Omega)=\{{\bf z}\in{\bf H}^{1}(\Omega)\suchthat {\bf z}=0\,\,\mbox{on}\,\, \Gamma \},\\[1.mm]
{\bf H}^{2,1}(Q_{T})=L^{2}(0,T;{\bf H}^{2}(\Omega))\cap H^{1}(0,T;{\bf L}^{2}(\Omega)),\\[1.mm]
{\bf H}^{2,1}_{\Sigma_{T}}(Q_{T})=\{{\bf z}\in {\bf H}^{2,1}(Q_{T})\suchthat {\bf z}=0\,\,\mbox{on}\,\,\Sigma_{T} \}.
\end{array}
\end{equation}
Similarly for $s\geqslant 0,$ we can define $H^{s}(\Omega),$ the Sobolev space for the scalar valued functions defined on $\Omega.$ Now for $\theta,\tau\geqslant 0,$ we introduce the following spaces which we use to analyze the beam equation
\begin{equation}\nonumber
\begin{array}{l}
 H^{\theta,\tau}(\Sigma^{s}_{T})=L^{2}(0,T;H^{\theta}(\Gamma_{s}))\cap H^{\tau}(0,T;L^{2}(\Gamma_{s})).
\end{array}
\end{equation} 
\begin{remark}
	Since $\Omega=\mathbb{T}_{L}\times (0,1)$ and $\Gamma_{s}=\mathbb{T}_{L}\times \{1\},$ the above definitions of the functional spaces implicitly assert that the functions are $L-$ periodic in the $x$ variable. 
\end{remark}
\begin{prop}\label{pr1}
Let $T>0.$ If $\eta$ is regular enough in the space variable, say $\eta(\cdot,t)\in H^{m}(\Gamma_{s})$ for $m\geqslant 2$ and the following holds
\begin{equation}\label{coet}
\begin{array}{l}
1+\eta(x,t)\geqslant\delta_{0}>0\quad\mbox{on}\quad \Sigma^{s}_{T},
\end{array}
\end{equation}
for some constant $\delta_{0},$ the map ${{g}}\mapsto \widehat{{g}}=g(\Phi^{-1}_{\eta(t)}(x,z))$ is a homeomorphism from ${ H}^{s}(\Omega_{t})$ to ${ H}^{s}(\Omega)$ for any $s\leqslant m.$
\end{prop}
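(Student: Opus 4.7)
The map $T_\eta : g\mapsto \widehat g = g\circ\Phi^{-1}_{\eta(t)}$ is linear with inverse $\widehat g\mapsto \widehat g\circ\Phi_{\eta(t)}$, and these two diffeomorphisms are built in essentially symmetric ways out of $\eta$ and $(1+\eta)^{\pm 1}$. Because \eqref{coet} combined with $\eta(\cdot,t)\in H^m(\Gamma_s)$, $m\geqslant 2$, makes $H^m(\mathbb T_L)$ a Banach algebra and forces $(1+\eta)^{-1}$ into the same regularity class as $\eta$, it suffices to prove the forward bound $\|\widehat g\|_{H^s(\Omega)}\leqslant C\|g\|_{H^s(\Omega_t)}$ for all $0\leqslant s\leqslant m$; the reverse inequality follows by the same argument applied to $T_\eta^{-1}$.

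For $s=0$, a direct change of variables with Jacobian $1+\eta\in[\delta_0,1+\|\eta\|_{L^\infty}]$ yields $\|\widehat g\|_{L^2(\Omega)}\simeq \|g\|_{L^2(\Omega_t)}$. For an integer $s=k\in\{1,\dots,m\}$ I would argue inductively on $k$ starting from the chain-rule identities
\begin{equation*}
\partial_z\widehat g=(1+\eta)\,\widehat{\partial_y g},\qquad \partial_x\widehat g=\widehat{\partial_x g}+z\eta_x\,\widehat{\partial_y g}.
\end{equation*}
The inductive step reduces to showing that multiplication by $1+\eta$ and by $z\eta_x$ preserves $H^{k-1}(\Omega)$. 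Regarding $\eta$ as a $z$-independent function on $\Omega$, one has $1+\eta\in H^m(\Omega)$ and $z\eta_x\in H^{m-1}(\Omega)$, and since $\Omega\subset\mathbb{R}^2$ with $m\geqslant 2>1$, the Sobolev multiplier rule gives $H^m(\Omega)\cdot H^{k-1}(\Omega)\subset H^{k-1}(\Omega)$ for $1\leqslant k\leqslant m$, so the $(1+\eta)$-term is harmless. For non-integer $0<s<m$, interpolation between the two surrounding integer cases closes the proof; this is permitted because \eqref{coet} and $\eta\in H^m\hookrightarrow W^{1,\infty}$ make $\Omega_t$ a uniformly Lipschitz domain on which the usual Sobolev scale interpolates.

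The principal obstacle I expect is the $z\eta_x$-multiplication at the borderline $k=m=2$, where the naive rule $H^1\cdot H^1\subset H^1$ fails in dimension two. Here I would exploit that $\eta$ depends only on the tangential variable: the one-dimensional embedding $H^1(\mathbb T_L)\hookrightarrow L^\infty(\mathbb T_L)$ places $\eta_x$ in $L^\infty$, which handles the first-order Leibniz pieces, while the only genuinely problematic second-order piece $\eta_{xx}\,\widehat{\partial_y g}$ is controlled via the mixed-norm embedding $H^1(\Omega)\hookrightarrow L^\infty((0,L);L^2((0,1)))$ (itself a consequence of $W^{1,1}(\mathbb T_L)\hookrightarrow L^\infty(\mathbb T_L)$) through the estimate
\begin{equation*}
\|\eta_{xx}\,v\|_{L^2(\Omega)}\leqslant \|\eta_{xx}\|_{L^2_x}\,\|v\|_{L^\infty_xL^2_z}.
\end{equation*}
This product-structure argument, specific to the one-dimensional character of $\eta$, is the most delicate point of the proof; everything else reduces to chain rule, change of variables, and standard multiplier and interpolation estimates.
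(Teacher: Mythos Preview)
Your proof outline is correct and complete. The paper itself does not give a proof of this proposition but simply refers to \cite[Proposition 2, Section 3]{grand}; your argument (change of variables for $s=0$, chain-rule induction for integer $s$, Sobolev multiplier estimates exploiting the one-dimensional dependence of $\eta$, and interpolation for non-integer $s$) is precisely the standard route and is in the same spirit as the cited reference. Your treatment of the borderline case $k=m=2$ via the anisotropic estimate $\|\eta_{xx}v\|_{L^2(\Omega)}\leqslant\|\eta_{xx}\|_{L^2_x}\|v\|_{L^\infty_xL^2_z}$ is the correct way to resolve the failure of $H^1\cdot H^1\subset H^1$ in two dimensions, and the justification through $W^{1,1}(\mathbb T_L)\hookrightarrow L^\infty(\mathbb T_L)$ is sound.
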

The proposition stated above can be proved in the same spirit of \cite[Proposition 2, Section 3]{grand}.\\
Now in view of Proposition \ref{pr1}, we define the notion of strong solution of the system \eqref{1.1}-\eqref{1.2}-\eqref{1.3} in terms of the strong solution of the system \eqref{1.16}.
\begin{mydef}\label{doss}
	 The triplet $(\rho,{\bf u},\eta)$ is a strong solution of the system \eqref{1.1}-\eqref{1.2}-\eqref{1.3} if
		\begin{equation}\label{conoeta}
		\begin{array}{ll}
		\eta\in  C^{0}\big([0,T]; H^{9/2}(\Gamma_{s})\big),&
		\eta_{t}\in L^{2}\big(0,T;{ H}^{4}(\Gamma_{s})\big)\cap C^{0}\big([0,T];{ H}^{3}(\Gamma_{s}) \big),\\[1.mm]
		\eta_{tt}\in L^{2}\big(0,T;{ H}^{2}(\Gamma_{s})\big)\cap C^{0}\big([0,T];{ H}^{1}(\Gamma_{s}) \big),&
		\eta_{ttt}\in L^{2}\big(0,T;{ L}^{2}(\Gamma_{s})\big),
		\end{array}
		\end{equation}
	   \eqref{coet} holds for every $(x,t)\in \Sigma^{s}_{T}$ and the triplet $(\widehat{\rho},\widehat{\bf u},\eta)=({\rho}\circ\Phi_{\eta}^{-1},{\bf u}\circ\Phi_{\eta}^{-1},\eta )$ solves \eqref{1.16} in the following Sobolev spaces
	\begin{equation}\label{de2}
	\begin{array}{l}
	\widehat\rho\in C^{0}\big([0,T]; H^{2}(\Omega) \big),\,
	\widehat\rho_{t}\in C^{0}\big([0,T]; H^{1}(\Omega)\big),\\[1.mm]
	\widehat{\bf u}\in L^{2}\big(0,T;{\bf H}^{3}(\Omega)\big)\cap C^{0}\big([0,T]; {\bf H}^{5/2}(\Omega) \big),
	\vspace{1.mm}\\
	\widehat{\bf u}_{t}\in L^{2}\big(0,T;{\bf H}^{2}(\Omega)\big)\cap C^{0}\big([0,T];{\bf H}^{1}(\Omega) \big),
	\vspace{1.mm}\\
	\widehat{\bf u}_{tt}\in L^{2}\big(0,T;{\bf L}^{2}(\Omega)\big).
	\end{array}
	\end{equation}
	($\eta$ is in the space mentioned in \eqref{conoeta}). Note that $(\rho,{\bf u})$ can then be obtained from $(\widehat{\rho},\widehat{\bf u})$ by $(\rho,{\bf u})=(\widehat{\rho}\circ\Phi_{\eta},\widehat{\bf u}\circ\Phi_{\eta}).$
\end{mydef}
In relation with Definition \ref{doss}, we introduce the following functional spaces
\begin{equation}\label{dofYi}
\begin{split}
Y_{1}^{T}=&\{{\rho}\in C^{0}([0,T];H^{2}(\Omega))\suchthat \rho_{t}\in C^{0}([0,T]; H^{1}(\Omega))\},\\[1.mm]
Y_{2}^{T}=&\{{\bf u}\in L^{2}(0,T;{\bf H}^{3}(\Omega))\cap C^{0}([0,T];{\bf H}^{5/2}(\Omega))\suchthat {\bf u}_{t}\in L^{2}(0,T;{\bf H}^{2}(\Omega))
\cap C^{0}([0,T];{\bf H}^{1}(\Omega)),\\[1.mm]
&\qquad\qquad\qquad\qquad\qquad\qquad\qquad\qquad\qquad\quad{\bf u}_{tt}\in L^{2}(0,T;{\bf L}^{2}(\Omega)) \},\\[1.mm]
Y_{3}^{T}=&\{\eta\in C^{0}([0,T]; H^{9/2}(\Gamma_{s})),\,\eta(x,0)=0\suchthat \eta_{t}\in L^{2}(0,T;{ H}^{4}(\Gamma_{s}))\cap C^{0}([0,T];{ H}^{3}(\Gamma_{s})), \\[1.mm]
&\qquad\qquad\qquad\qquad\qquad\eta_{tt}\in L^{2}(0,T;{ H}^{2}(\Gamma_{s}))\cap C^{0}([0,T];{ H}^{1}(\Gamma_{s})),
\eta_{ttt}\in L^{2}(0,T;{ L}^{2}(\Gamma_{s}))\}.
\end{split}
\end{equation}
The spaces $Y_{1}^{T},$ $Y_{2}^{T}$ and $Y_{3}^{T}$ correspond to the spaces in which the unknowns $\widehat{\rho},$ $\widehat{\bf u}$ and $\eta$ respectively.\\
 Now we precisely state the main result of the article.
\begin{thm}\label{main}
	Assume that 
	\begin{equation}\label{cit}
	\left\{ \begin{array}{ll}
	(i)\,&(a)\,\,{Regularity\,of\,initial\,conditions:}\,\rho_{0}\in {H}^{2}(\Omega),\,\, \eta_{1}\in {H}^{3}(\Gamma_{s}),\,\, {\bf u}_{0}\in {\bf H}^{3}(\Omega).\\[2.mm]
    &(b)\,\,{Compatibility\,between\,initial\, and\,boundary\,conditions}:\\
	&\quad(b)_{1}\,\,\left( {\bf u}_{0}-\begin{bmatrix}
	0\\
	z\eta_{1}
	\end{bmatrix}\right)=0\,\mbox{on}\,\Gamma,\\[2.mm]
	 &\quad(b)_{2}\,\,-P'(\rho_{0})\nabla\rho_{0}-(\delta\eta_{1,xx}-(\mu+2\mu')({ u}_{0})_{2,z}+P(\rho_{0}))z\rho_{0}\vec{e}_{2}+z\rho_{0}({\bf u}_{0})_{z}\eta_{1}\\
	 &\quad\,\,\quad\quad-\rho_{0}({\bf u}_{0}\cdot\nabla){\bf u}_{0}-(-\mu \Delta -(\mu+\mu{'})\nabla\mathrm{div}){\bf u}_{0}=0\,\mbox{on}\,\Gamma,\\
	(ii)\,&\eqref{cor0}\,\mbox{holds},
	\end{array}\right.
	\end{equation}
	 where we use the notations $P'({\rho}_{0})=\nabla P({\rho}_{0}),$ $P(\rho_{0})=(a\rho_{0}^{\gamma}-a\overline{\rho}^{\gamma})$ and ${\bf u}_{0}=((u_{0})_{1},(u_{0})_{2}).$
	Then there exists $T>0$ such that the system \eqref{1.16} admits a solution $(\widehat{\rho},\widehat{\bf u},\eta)\in Y_{1}^{T}\times Y_{2}^{T}\times Y_{3}^{T}.$ 
	Consequently in the sense of Definition \ref{doss} the system \eqref{1.1}-\eqref{1.2}-\eqref{1.3} admits a  strong solution $({\rho},{\bf u},\eta).$ 
\end{thm}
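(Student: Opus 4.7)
The plan is to set up a Banach fixed-point argument on the fixed-domain system \eqref{1.16}. I freeze the nonlinear terms at a previous iterate $(\overline{\rho},\overline{\bf u},\overline{\eta})$ lying in a closed ball $\mathcal{B}_{R,T}\subset Y_1^T\times Y_2^T\times Y_3^T$ of functions matching the prescribed initial data, and define the next iterate $(\widehat{\rho},\widehat{\bf u},\eta)$ by solving the resulting decoupled linear problems. For $R$ comparable to the norms of the initial data and $T$ sufficiently small, the map should preserve $\mathcal{B}_{R,T}$ and be contractive in a suitable weaker norm.

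\textbf{Step 1: Linear subproblems.} Given $(\overline{\rho},\overline{\bf u},\overline{\eta})\in\mathcal{B}_{R,T}$, I would first solve the transport equation \eqref{1.17} with coefficients and source $F_1$ computed from the barred data via the method of characteristics. Because the transporting velocity vanishes on $\partial\Omega$ no boundary condition is needed for $\widehat{\rho}$, and standard energy estimates on $D^{\alpha}\widehat{\rho}$ for $|\alpha|\le 2$ yield $\widehat{\rho}\in C^0([0,T];H^2(\Omega))$ with $\widehat{\rho}_t\in C^0([0,T];H^1(\Omega))$. The pair $(\widehat{\bf u},\eta)$ is then obtained from the linear coupled parabolic–hyperbolic problem
\begin{equation*}
\begin{cases}
\overline{\rho}\,\widehat{\bf u}_t-\mu\Delta\widehat{\bf u}-(\mu+\mu')\nabla\,\mathrm{div}\,\widehat{\bf u}=G & \text{in } Q_T,\\
\eta_{tt}-\beta\eta_{xx}-\delta\eta_{txx}+\alpha\eta_{xxxx}=H & \text{on } \Sigma^s_T,\\
\widehat{\bf u}=\eta_t\vec{e_2}\text{ on }\Sigma^s_T,\quad \widehat{\bf u}=0\text{ on }\Sigma^\ell_T,
\end{cases}
\end{equation*}
with $G:=F_2(\overline{\rho},\overline{\bf u},\overline{\eta})-\nabla P(\overline{\rho})$, $H:=F_3(\overline{\rho},\overline{\bf u},\overline{\eta})$ and initial data $(\widehat{\bf u}_0,\eta(0),\eta_t(0))=({\bf u}_0,0,\eta_1)$. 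For this subsystem I would invoke maximal-regularity theory in the spirit of the analogous incompressible analyses (Raymond, Grandmont, Boulakia--Guerrero, Casanova), adapted to the Lam\'e operator and to the regularity class $Y_2^T\times Y_3^T$, to obtain well-posedness with constants independent of $T$.

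\textbf{Step 2: Preservation of the ball and contraction.} Each term in $F_1,F_2,F_3$ contains either a factor of $\overline{\eta}$ (small because $\eta(0)=0$ forces $\|\overline{\eta}\|_{L^\infty(0,T;H^{9/2})}\le CT R$), or an integration in time on an interval of length $T$ producing a factor $T^{1/2}$ in $L^2$ norms. Combining the Sobolev product inequalities, the embedding ${\bf H}^{5/2}(\Omega)\hookrightarrow W^{1,\infty}(\Omega)$ and standard trace theory, I would estimate the new iterate in $Y_1^T\times Y_2^T\times Y_3^T$ by the norms of the initial data plus a term of the form $C(T^{1/2}+T)Q(R)$, with $Q$ polynomial. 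Choosing $R$ large and then $T$ small preserves $\mathcal{B}_{R,T}$; the non-degeneracy \eqref{coet} follows from $\|\eta(t)\|_{L^\infty}\le CT^{1/2}\|\eta_t\|_{L^2(0,T;H^3)}$. For contraction I would subtract the linear problems corresponding to two iterates and estimate the difference in a weaker norm (one derivative less) where the right-hand side is genuinely quadratic in the differences, so that smallness of $T$ again forces a Lipschitz constant $<1$. The compatibility conditions $(i)(b)_1$ and $(i)(b)_2$ are precisely what makes the first time-trace of $\widehat{\bf u}$ compatible with the imposed boundary datum $\widehat{\bf u}=\eta_t\vec{e_2}$ and ensures that $\widehat{\bf u}_t(\cdot,0)\in{\bf H}^1(\Omega)$, as required by $Y_2^T$.

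\textbf{Main obstacle.} I expect the hard part to be Step 1: establishing maximal-regularity estimates uniform in $T$ for the linear fluid–beam system above. The Dirichlet coupling $\widehat{\bf u}=\eta_t\vec{e_2}$ mixes parabolic (fluid) and hyperbolic (beam) scales, and the pressure-like source $\nabla P(\overline{\rho})$, whose density coefficient is only in $L^\infty(0,T;H^2)$, has to be absorbed without spoiling the $L^2(0,T;{\bf H}^3)$-regularity of $\widehat{\bf u}$. A secondary difficulty, essentially technical but space-consuming, is verifying all the hidden compatibility conditions at $t=0$ implicit in the temporal traces of $\widehat{\bf u}_t$ and $\eta_{tt}$ required by the spaces $Y_2^T,Y_3^T$; this is where the carefully crafted condition $(i)(b)_2$ of Theorem \ref{main} comes into play.
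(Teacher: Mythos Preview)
Your overall plan is reasonable, but it differs from the paper's proof in two structural ways.

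\textbf{Decoupling versus coupled linear theory.} You keep the Dirichlet coupling $\widehat{\bf u}=\eta_t\vec{e}_2$ in the linear subproblem and propose to invoke a maximal-regularity result for the coupled fluid--beam system. The paper avoids this entirely: before linearising it performs the lift ${\bf w}=\widehat{\bf u}-z\eta_t\vec{e}_2$, which turns the velocity equation into a Lam\'e-type parabolic problem with \emph{homogeneous} Dirichlet data and pushes the coupling into the source terms $G_1,G_2,G_3$. The three linear problems (transport for $\sigma$, parabolic for ${\bf w}$, damped beam for $\eta$) are then completely decoupled and handled by off-the-shelf results (Valli--Zaj\polhk aczkowski for the velocity, analytic-semigroup theory for the beam). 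What you flag as the ``main obstacle'' --- uniform-in-$T$ maximal regularity for the coupled linear system --- therefore never arises in the paper. Your route is not wrong, but it makes Step~1 substantially harder; if you follow it you must actually prove that coupled maximal-regularity result, which is not available in the literature at the regularity level $Y_2^T\times Y_3^T$ for compressible fluids.

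\textbf{Schauder versus Banach.} The paper uses Schauder's fixed point theorem on a carefully designed convex set $\mathscr{C}_T$ (with several independent radii $B_1,\dots,B_4$ tuned in a hierarchical order), proving compactness in a weaker topology and continuity of the solution map. You propose Banach's theorem with a contraction estimate in a weaker norm. Your approach, if it goes through, yields uniqueness for free, which the paper does not obtain; on the other hand the paper does not need any Lipschitz estimate on the map, only the (easier) continuity, and the compactness is cheap once the decoupled linear estimates are in place. Either fixed-point strategy is viable here, but note that your contraction step will again force you to control differences of solutions of the coupled linear problem, so the difficulty from the previous paragraph reappears.

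In short: your proposal is a legitimate alternative, but the paper's key simplification --- the boundary lift that decouples the linear problems --- is exactly what dissolves the obstacle you identify. If you want to carry your plan through, the missing ingredient is a self-contained maximal-regularity theorem for the linear Lam\'e--beam system with variable density coefficient $\overline{\rho}\in L^\infty(0,T;H^2)$ and constants independent of $T$.
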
 
 \begin{remark}
 	Our analysis throughout the article can be suitably adapted to consider any pressure law $p(\cdot)\in C^{2}(\mathbb{R}^{+})$ (in this article we present the proofs with the pressure law given by $p(\rho)=a\rho^{\gamma},$ with $\gamma>1$) such that there exists a positive constant $\overline{\rho}$ satisfying $p(\overline{\rho})={p_{ext}},$ where ${p_{ext}}(>0)$ is the external force acting on the beam. The adaptation is possible since we only consider the case where the fluid density $\rho$ has a positive lower and upper bound. 
 \end{remark} 
Now let us sketch the strategy towards the proof of Theorem \ref{main}.
\subsection{Strategy}\label{Chap3strategy}
(i) $\mathit{Changing\,\,\eqref{1.16}\,\, to\,\, a\,\, homogeneous\,\, boundary\,\, value\,\, problem}$: Recall that (see Remark \ref{eta0}) we will prove the existence of local in time strong solution of the system \eqref{1.1}-\eqref{1.2}-\eqref{1.3} only when the beam displacement $\eta$ is close to zero. Again observe that $(\widehat{\rho}=\overline{\rho},\widehat{\bf u}=0,\widehat{\eta}=0)$ is a steady state solution of the system \eqref{1.1}-\eqref{1.2}-\eqref{1.3} and hence of the system \eqref{1.16}. So to work in a neighborhood of $\eta=0,$ we make the following change of unknowns in \eqref{1.16},
\begin{equation}\label{1.20}
\begin{array}{l}
\sigma=\widehat\rho-\overline{\rho},
\quad
{\bf v}=(v_{1},v_{2})=\widehat {\bf u}-0,
\quad
\eta=\eta-0.
\end{array}
\end{equation} 
In view of the change of unknowns \eqref{1.20} one obtains
\begin{equation}\label{1.21}
\left\{
\begin{array}{ll}
{\sigma_{t}}+\begin{bmatrix}
{v}_{1}\\
\frac{1}{(1+\eta)}({v}_{2}-\eta_{t}z-{v}_{1}z\eta_{x})
\end{bmatrix}
\cdot \nabla\sigma+(\sigma+\overline{\rho})\mbox{div}({\bf v})={F}_{1}(\sigma+\overline{\rho},{\bf v},\eta)\quad &\mbox{in}\quad Q_{T},
\vspace{1.mm}\\
(\sigma+\overline{\rho}){\bf v}_{t}-\mu \Delta {\bf v}-(\mu+\mu{'})\nabla\mathrm{div}{\bf v}=-{P}'(\sigma+\overline{\rho}) \nabla \sigma+{F}_{2}(\sigma+\overline{\rho},{\bf v},\eta)\quad &\mbox{in} \quad {Q}_{T},
\vspace{1.mm}\\
{\bf v}=\eta_{t}\vec{e_{2}}\quad& \mbox{on}\quad \Sigma^{s}_{T},\\[1.mm]
{\bf v}=0\quad &\mbox{on}\quad\Sigma^{\ell}_{T},
\vspace{1.mm}\\
{\bf v}(\cdot,0)={\bf u}_{0}\quad& \mbox{in} \quad \Omega,
\vspace{1.mm}\\
\sigma(\cdot,0)=\sigma_{0}={\rho_{0}}-{\overline\rho}\quad &\mbox{in}\quad \Omega,
\vspace{1.mm}\\
\eta_{tt}-\beta \eta_{xx}-  \delta\eta_{txx}+\alpha\eta_{xxxx}={F}_{3}(\sigma+\overline{\rho},{\bf v},\eta)\quad &\mbox{on}\quad \Sigma^{s}_{T},
\vspace{1.mm}\\
\eta(0)=0\quad \mbox{and}\quad \eta_{t}(0)=\eta_{1}\quad &\mbox{in}\quad \Gamma_{s}.
\end{array} \right.
\end{equation}
We transform the system \eqref{1.21} into a homogeneous Dirichlet boundary value problem by performing further the following change of unknown
\begin{equation}\label{cvhd}
\begin{array}{l}
{\bf w}=(w_{1},w_{2})={\bf v}-z\eta_{t}\vec{e_{2}}.
\end{array}
\end{equation}
Since ${\bf v}$ and ${\eta}_{t}$ both are $L$-periodic in the $x-$direction, the new unknown ${\bf w}$ is also $L$-periodic in the $x-$direction. With the new unknown ${\bf w},$ we write the transformed system in the following form
\begin{equation}\label{chdb}
\left\{
\begin{array}{ll}
{\sigma_{t}}+\begin{bmatrix}
{w}_{1}\\
\frac{1}{(1+\eta)}({w}_{2}-{w}_{1}z\eta_{x})
\end{bmatrix}
\cdot \nabla\sigma=G_{1}(\sigma,{\bf w},\eta)\quad& \mbox{in}\quad Q_{T}, 
\vspace{1.mm}\\
(\sigma+\overline{\rho}){\bf w}_{t}-\mu \Delta {\bf w} -(\mu+\mu{'})\nabla\mathrm{div}{\bf w}=G_{2}(\sigma,{\bf w},\eta)\quad& \mbox{in}\quad Q_{T},
\vspace{1.mm}\\
{\bf w}=0\quad &\mbox{on}\quad \Sigma_{T},
\vspace{1.mm}\\
{\bf w}(\cdot,0)={\bf w}_{0}={\bf u}_{0}-z\eta_{1}\vec{e}_{2}\quad &\mbox{in} \quad \Omega,
\vspace{1.mm}\\
\sigma(\cdot,0)=\sigma_{0}={\rho_{0}}-{\overline\rho}\quad &\mbox{in}\quad \Omega,
\vspace{1.mm}\\
\eta_{tt}-\beta \eta_{xx}-  \delta\eta_{txx}+\alpha\eta_{xxxx}=G_{3}(\sigma,{\bf w},\eta)\quad & \mbox{on}\quad \Sigma^{s}_{T},
\vspace{1.mm}\\
\eta(0)=0\quad \mbox{and}\quad \eta_{t}(0)=\eta_{1}\quad& \mbox{in}\quad \Gamma_{s},
\end{array} \right.
\end{equation}
where
\begin{equation}\label{1.22}
\begin{split}
& G_{1}(\sigma,{\bf w},\eta)=-(\sigma+\overline{\rho})\mbox{div}({\bf w}+z\eta_{t}\vec{e_{2}})+{F}_{1}(\sigma+\overline{\rho},{{\bf w}+z\eta_{t}\vec{e_{2}}},\eta),\\[1.mm]
& G_{2}(\sigma,{\bf w},\eta)=-{P}'(\sigma+\overline{\rho}) \nabla \sigma-	z\eta_{tt}(\sigma+\overline{\rho})\vec{e_{2}}-(-\mu \Delta -(\mu+\mu{'})\nabla\mathrm{div})(z\eta_{t}\vec{e_{2}})\\[1.mm]
& \qquad\qquad\qquad+{F}_{2}(\sigma+\overline{\rho},{\bf w}+z\eta_{t}\vec{e_{2}},\eta),
\vspace{1.mm}\\
& G_{3}(\sigma,{\bf w},\eta)={F}_{3}(\sigma+\overline{\rho},{\bf w}+\eta_{t}\vec{e_{2}},\eta).
\end{split}
\end{equation}
(ii) $\textit{Study\,\,of\,\,some\,\,decoupled\,\,linear\,\,problems}$: Observe that in the new system \eqref{chdb} the coupling between the velocity of the fluid and the elastic structure appears only as source terms. In order to solve the system \eqref{chdb} we first study some linear equations in Section \ref{lineqs}. In order to analyze the local in time existence of strong solution the difficulty is to track the dependence of the constants (appearing in the inequalities) with respect to the time parameter \textquoteleft T\textquoteright.\ \,In this direction we first obtain a priori estimates for the linear density and velocity equations with non homogeneous source terms in the spirit of \cite{vallizak}. Then we prove the existence of strong solutions for a linear beam equation. The proof strongly relies on the analyticity of the corresponding beam semigroup (see \cite{chen} for details). At this point we refer the readers to the articles \cite{denk} (maximal $L^{p}-L^{q}$ regularity of structurally damped beam equation), \cite{fanli} (analyticity and exponential stability of beam semigroup), \cite{raymondbeam} (study of beam equation in the context of an incompressible fluid structure interaction problem) and the references therein for the existence and regularity issues of the damped beam equation. In our case to obtain estimates with the constants independent of \textquoteleft T\textquoteright\ for the beam equation we first fix a constant $\overline{T}>0$ and restrict ourselves to work in the time interval $(0,T)$ where
\begin{equation}\label{Tbar}
\begin{array}{l}
T<\overline{T}.
\end{array}
\end{equation}
This technique is inspired from \cite{rayvan}.\\ 
(iii) $\mathit{Fixed\,\,point\,\,argument}$: In Section \ref{fixdpt} we prove the existence of a strong solution of \eqref{chdb} by using the Schauder's fixed point theorem based on \eqref{chdb}-\eqref{1.22}.
\begin{remark}
	Since $\eta(0)=0$ the regularity \eqref{conoeta} of $\eta$ guarantees that  
	\begin{equation}\label{smalleta}
	\begin{array}{l}
	\|\eta\|_{L^{\infty}(\Sigma^{s}_{T})}\leqslant CT\|\eta_{t}\|_{L^{\infty}(0,T;H^{3}(\Gamma_{s}))},
	\end{array}
	\end{equation}
	for a constant $C$ independent of $T.$ For small enough time $T,$ \eqref{smalleta} furnishes $\eta\approx 0$ and hence during small times, the beam stays close to the steady state zero.
\end{remark}
\subsection{Comments on initial and compatibility conditions}\label{incom}
(i) 
Recall from \eqref{cit}$(i)$$(a)$ that we assume ${\bf u}_{0}\in {\bf H}^{3}(\Omega).$ Also observe that in our solution (see \eqref{de2}) the vector field  $\widehat{\bf u}\in C^{0}([0,T];{\bf H}^{5/2}(\Omega))$ i.e for the velocity field there is a loss of $\frac{1}{2}$ space regularity as the time evolves. One can find such instances of a loss of space regularity in many other articles in the literature, for instance we refer the readers to \cite{bougue3}, \cite{kukavica} (for the coupling of fluid-elastic structure comprising a compressible fluid) and \cite{cout1}, \cite{cout2}, \cite{rayvan} (for incompressible fluid structure interaction models).\\[2.mm] 
(ii) We use \eqref{1.22}$_{3}$ to obtain the following expression of $G_{3}\mid_{t=0}$ (the value of $G_{3}(\sigma,{\bf w},\eta)$ at time $t=0$)
\begin{equation}\label{G30}
\begin{array}{l}
G_{3}\mid_{t=0}=-(\mu+2\mu')({u}_{0})_{2,z}+P(\rho_{0}).
\end{array}
\end{equation} 
Using $\rho_{0}\in H^{2}(\Omega),$ ${\bf u}_{0}\in {\bf H}^{3}(\Omega)$ (see \eqref{cit}$(i)$$(a)$) and standard trace theorems one easily checks that 
\begin{equation}\label{regG3t0}
\begin{array}{l}
G_{3}\mid_{t=0}\in H^{3/2}(\Gamma_{s}).
\end{array}
\end{equation}
We will use the regularity of $G_{3}\mid_{t=0}$ (in fact we will only use $G_{3}\mid_{t=0}\in H^{1}(\Gamma_{s})$) to prove the regularity of $\eta.$ This will be detailed in Theorem \ref{t23}.\\[2.mm]
 (iii) We use \eqref{G30} and the equation \eqref{chdb}$_{6}$ to check that
 $$\eta_{tt}(\cdot,0)=\delta\eta_{1,xx}-(\mu+2\mu')(u_{0})_{2,z}+P(\rho_{0}).$$
  Hence using \eqref{1.22}$_{2}$ one obtains the following expression of $G_{2}\mid_{t=0}$ (the value of $G_{2}(\sigma,{\bf w},\eta)$ at time $t=0$)
\begin{equation}\label{iG0}
\begin{split}
G_{2}\mid_{t=0}&=-P'(\rho_{0})\nabla\rho_{0}-(\delta\eta_{1,xx}-(\mu+2\mu')({ u}_{0})_{2,z}
+P(\rho_{0}))z\rho_{0}\vec{e}_{2}
+z\rho_{0}({\bf u}_{0})_{z}\eta_{1}\\
&-\rho_{0}({\bf u}_{0}\cdot\nabla){\bf u}_{0}-(-\mu\Delta-(\mu+\mu{'})\nabla\mathrm{div})(z\eta_{1}\vec{e_{2}}).
\end{split}
\end{equation}
 This gives
\begin{equation}\label{compat}
\begin{split}
G_{2}\mid_{t=0}&-(-\mu \Delta -(\mu+\mu{'})\nabla\mathrm{div})\left( {\bf u}_{0}-\begin{bmatrix}
0\\
z\eta_{1}
\end{bmatrix}\right)
=-P'(\rho_{0})\nabla\rho_{0}-(\delta\eta_{1,xx}-(\mu+2\mu')({ u}_{0})_{2,z}\\
&+P(\rho_{0}))z\rho_{0}\vec{e}_{2}
+z\rho_{0}({\bf u}_{0})_{z}\eta_{1}-\rho_{0}({\bf u}_{0}\cdot\nabla){\bf u}_{0}-(-\mu \Delta -(\mu+\mu{'})\nabla\mathrm{div}){\bf u}_{0}.
\end{split}
\end{equation}
The regularity assumptions \eqref{cit}$(i)$$(a)$ and \eqref{compat} furnish the following
\begin{equation}\label{regcom}
\begin{array}{l}
G_{2}\mid_{t=0}-(-\mu \Delta -(\mu+\mu{'})\nabla\mathrm{div})\left( {\bf u}_{0}-\begin{bmatrix}
0\\
z\eta_{1}
\end{bmatrix}\right)\in {\bf H}^{1}(\Omega).
\end{array}
\end{equation}
Hence one obtains (recalling that ${\bf w}_{0}={\bf u}_{0}-z\eta_{1}\vec{e}_{2}$)
\begin{equation}\label{asm2}
\begin{array}{ll}
&\mbox{the assumption}\, \eqref{cit}(i)(a)\,\mbox{and}\, \eqref{cit}(i)(b)_{2}\\
&\Longrightarrow G_{2}\mid_{t=0}-(-\mu \Delta{\bf w}_{0} -(\mu+\mu{'})\nabla\mathrm{div}{\bf w}_{0})\in {\bf H}^{1}_{0}(\Omega).
\end{array}
\end{equation}
We need this to prove some regularity of ${\bf w}$ and hence of ${\widehat{\bf u}}.$ This will be detailed in Theorem \ref{t21}.\\[2.mm] 
\subsection{Bibliographical comments}\label{bibcomlclext} Here we mainly focus on the existing literature devoted to the study of fluid structure interaction problems.\\ 
To begin with we quote a few articles dedicated to the mathematical study of compressible Navier-Stokes equations. The existence of local in time classical solutions for the compressible Navier-Stokes equations in a time independent domain was first proved in \cite{nash} and the uniqueness was established in \cite{serrin}. The global existence of strong solutions for a small perturbation of a stable constant state was established in the celebrated work \cite{matnis}. In the article \cite{vallizak} the authors established the local in time existence of strong solutions in the presence of inflow and outflow of the fluid through the boundary. In the same article they also present the proof of global in time existence for small data in the absence of the inflow. P.-L. Lions proved (in \cite{lions}) the global existence of renormalized weak solution with bounded energy for an isentropic fluid (i.e $p(\rho)=\rho^{\gamma}$) with the adiabatic constant $\gamma>3d/(d+2),$ where $d$ is the space dimension. E. Feireisl $\mathit{et\, al.}$ generalized the approach to cover the range $\gamma>3/2$ in dimension $3$ and $\gamma>1,$ in dimension $2$ in \cite{feireisl}. The well-posedness issues of the compressible Navier-Stokes equations for critical regularity data can be found in \cite{danchin}, \cite{danchin1}. For further references and a very detailed development of the mathematical theory of compressible flow we refer the reader into the books \cite{novotny1} and \cite{bresch1}.\\
In the last decades the fluid-structure interaction problems have been an area of active research. There is a rich literature concerning the motion of a structure inside or at the boundary of a domain containing a viscous incompressible Newtonian fluid, whose behavior is described by Navier-Stokes equations. For instance local existence and uniqueness of strong solutions of incompressible fluid-structure models with the structure immersed inside the fluid are studied in \cite{cout1} (the elastic structure is modeled by linear Kirchhoff equations) and \cite{cout2} (the elastic structure is governed by quasilinear elastodynamics). There also exist articles dealing with incompressible fluid-structure interaction problems where the structure appears on the fluid boundary and is modeled by Euler-Bernoulli damped beam equations \eqref{1.1}$_{7}$-\eqref{1.1}$_{8}.$ For example we refer the readers to \cite{veiga} (local in time existence of strong solutions), \cite{esteban} (existence of weak solutions), \cite{raymondbeam} (feedback stabilization), \cite{grand} (global in time existence) and the references therein for a very detailed discussion of such problems.\\
Despite of the growing literature on incompressible fluids the number of articles addressing the compressible fluid-structure interaction problems is relatively limited and the literature has been rather recently developed. One of the fundamental differences between the incompressible and compressible Navier-Stokes equations is that the pressure of the fluid in incompressible Navier-Stokes equations is interpreted as the Lagrange multiplier whereas in the case of compressible Navier-Stokes equations the pressure is given as a function of density with the density modeled by a transport equation of hyperbolic nature. The strong coupling between the parabolic and hyperbolic dynamics is one of the intricacies in dealing with the compressible Navier-Stokes equations and this results in the regularity incompatibilities between the fluid and the solid structure. However in the past few years there have been works exploring the fluid-structure interaction problems comprising the compressible Navier-Stokes equations with an elastic body immersed in the fluid domain. For instance in the article \cite{bougue2} the authors prove the existence and uniqueness of strong solutions of a fluid structure interaction problem for a compressible fluid and a rigid structure immersed in a regular bounded domain in dimension 3. The result is proved in any time interval $(0,T),$ where $T>0$ and for a small perturbation of a stable constant state provided there is no collision between the rigid body and the boundary $\partial\Omega$ of the fluid domain. In \cite{boulakia1} the existence of weak solution is obtained in three dimension for an elastic structure immersed in a compressible fluid. The structure equation considered in \cite{boulakia1} is strongly regularized in order to obtain suitable estimates on the elastic deformations. A result concerning the local in time existence and uniqueness of strong solutions for a problem coupling compressible fluid and an elastic structure (immersed inside the fluid) can be found in \cite{bougue3}. In the article \cite{bougue3} the equation of the structure does not contain any extra regularizing term. The flow corresponding to a Lagrangian velocity is used in \cite{bougue3} in order to transform the fluid structure interaction problem in a reference fluid domain $\Omega_{F}(0),$ whereas in the present article we use the non physical change of variables \eqref{1.14} for the similar purpose of writing the entire system in a reference configuration. A similar Navier-Stokes-Lam\'{e} system as that of \cite{bougue3} is analyzed in \cite{kukavica} to prove the existence of local in time strong solutions but in a different Sobolev regularity framework. In the article \cite{kukavica} the authors deal with less regular initial data.  We also quote a very recent work \cite{boukir} where the authors prove the local in time existence of a unique strong solution of a compressible fluid structure interaction model where the structure immersed inside the fluid is governed by the Saint-Venant Kirchhoff equations.\\
 On the other hand there is a very limited number of works on the compressible fluid-structure interaction problems with the structure appearing on the boundary of the fluid domain. The article \cite{flori2} deals with a 1-D structure governed by plate equations coupled with a bi-dimensional compressible fluid where the structure is located at a part of the boundary. Here the authors consider the velocity field as a potential and in their case the non linearity occurs only in the equation modeling the density. Instead of writing the system in a reference configuration in \cite{flori2} the authors proved the existence and uniqueness of solution in Sobolev-like spaces defined on time dependent domains. The existence of weak solution for a different compressible fluid structure interaction model (with the structure appearing on the boundary) is studied in dimension three by the same authors in \cite{flori1}. In the model considered in \cite{flori1}, the fluid velocity $v$ satisfies $\mbox{curl}v\wedge n=0$ on the entire fluid boundary and the plate is clamped everywhere on the structural boundary. In a recent article \cite{gavalos1} the authors prove the Hadamard well posedness of a linear compressible fluid structure interaction problem (three dimensional compressible fluid interacting with a bi-dimensional elastic structure) defined in a fixed domain  and considering the Navier-slip boundary condition at the interactive boundary. They write the coupled system in the form
 $$\frac{d}{dt}\begin{pmatrix}
 \rho\\
 {\bf u}\\
 \eta\\
 \eta_{t}
 \end{pmatrix}=A\begin{pmatrix}
 \rho\\
 {\bf u}\\
 \eta\\
 \eta_{t}
 \end{pmatrix}\,\,\mbox{in}\,\,(0,T),\quad \mbox{and}\quad\begin{pmatrix}
 \rho(0)\\
 {\bf u}(0)\\
 \eta(0)\\
 \eta_{t}(0)
 \end{pmatrix}=\begin{pmatrix}
 \rho_{0}\\
 {\bf u}_{0}\\
 \eta_{1}\\
 \eta_{2}
 \end{pmatrix},$$
  and prove the existence of mild solution $(\rho,{\bf u},\eta,\eta_{t})$ in the space $C^{0}([0,T];\mathcal{D}(A))$ where $\mathcal{D}(A)$ is the domain of the operator $A.$ Their approach is based on using the Lumer-Phillips theorem to prove that $A$ generates a strongly continuous semigroup. In yet another recent article \cite{breit} the authors consider a three dimensional compressible fluid structure interaction model where the structure located at the boundary is a shell of Koiter-type with some prescribed thickness. In the spirit of \cite{lions} and \cite{feireisl} the authors prove the existence of a weak solution for their model with the adiabatic constant restricted to $\gamma>\frac{12}{7}.$ They show that a weak solution exists until the structure touches the boundary of the fluid domain.\\
 To the best of our knowledge there is no existing work (neither in dimension 2 nor in 3) proving the existence of strong solutions for the non-linear compressible fluid-structure interaction problems (defined in a time dependent domain) considering the structure at the boundary of the fluid domain. In the present article we address this problem in the case of a fluid contained in a 2d channel and interacting with a 1d structure at the boundary. Our approach is different from that of \cite{gavalos1} and \cite{breit}. In \cite{gavalos1}, since the problem itself is linearized in a fixed domain, the authors can directly use a semigroup formulation to study the existence of mild solution, whereas \cite{breit} considers weak solutions and a $4$ level approximation process (using artificial pressure, artificial viscosity, regularization of the boundary and Galerkin approximation for the momentum equation). In the study of weak solutions (in \cite{lions}, \cite{feireisl}, \cite{breit}) one of the major difficulties is to pass to the limit in the non-linear pressure term which is handled by introducing a new unknown called the effective viscous flux. In our case of strong regularity framework we do not need to introduce the effective viscous flux and for small enough time $T,$ the term $\nabla P(\sigma+\overline{\rho})$ can be treated as a non homogeneous source term. Our approach is based on studying the regularity properties of a decoupled parabolic equation, continuity equation and a beam equation. This is done by obtaining some apriori estimates and exploiting the analyticity of the semigroup corresponding to the beam equation. Then the existence result for the non-linear coupled problem is proved by using the Schauder's fixed point argument. We prove the existence of the fixed point in a suitable convex set, which is constructed very carefully based on the estimates of the decoupled problems and the estimates of the non-homogeneous source terms. This led us to choose this convex set as a product of balls (in various functional spaces) of different radius. In the present article we prove a local in time existence result of strong solutions whose incompressible counterpart was proved in \cite{veiga}.\\  
  Let us also mention the very recent article \cite{shibata} where the global existence for the  compressible viscous fluids (without any structure on the boundary) in a bounded domain is proved in the maximal $L^{p}-L^{q}$ regularity class. In this article the authors consider a slip type boundary condition. More precisely the fluid velocity ${\bf u}$ satisfies the following on the boundary
  $$
D({\bf u}){\bf n}-\langle D({\bf u}){\bf n},{\bf n}\rangle {\bf n}=0,\quad\mbox{and}\quad {\bf u}\cdot {\bf n}=0\quad\mbox{on}\quad \partial\Omega\times (0,T).$$
 In a similar note one can consider a fluid structure interaction problem with slip type boundary condition. In that case the velocity field ${\bf u}$ solves the following
 \begin{equation}\label{fslip}
 \begin{array}{l}
 D({\bf u}){\bf n}-\langle D({\bf u}){\bf n},{\bf n}\rangle {\bf n}=0,\quad\mbox{and}\quad {\bf u}\cdot {\bf n}=\eta_{t}\quad\mbox{on}\quad \Gamma_{s}\times (0,T),
 \end{array}
\end{equation}
 where $\eta_{t}$ is the structural velocity at the interactive boundary $\Gamma_{s}\times(0,T).$ To the best of our knowledge for a compressible fluid structure interaction problem the condition \eqref{fslip} is treated only in \cite{gavalos1}, proving the existence of mild solution. Of course the boundary condition \eqref{fslip} is different from the one we consider in the present article since in our case we do not allow the fluid to slip tangentially through the fluid structure interface (i.e recall in our case ${ u}_{1}=0$ on $\Sigma^{s}_{T}$).\\
 A more generalized slip boundary condition is considered in \cite{muhacan} in the context of an incompressible fluid structure interaction problem. In the model examined in \cite{muhacan} the structural displacement has both tangential and normal components with respect to the reference configuration. At the interface the fluid and the structural velocities are coupled via a kinematic coupling condition and a dynamic coupling condition (stating that the structural dynamics is governed by the jump of the normal stress at the interface). The kinematic coupling conditions at the interface treated in \cite{muhacan} consists of continuity of the normal velocities and a second condition stating that the slip between the tangential components of the fluid and structural velocities is proportional to the fluid normal stress. The authors in \cite{muhacan} prove the existence of a weak solution for their model. 
 \subsection{Outline} Section \ref{lineqs} contains results involving the existence and uniqueness of some decoupled linear equations. We state the existence and uniqueness result for a parabolic equation in Section \ref{veleq}, continuity equation in Section \ref{deneq}, linear beam equation in Section \ref{beameq}. In Section \ref{fixdpt} we prove Theorem \ref{main} by using the Schauder fixed point theorem. 
\section{Analysis of some linear equations}\label{lineqs}
We will prove the existence and uniqueness of strong solutions of a parabolic equation, a continuity equation and a damped beam equation with prescribed initial data and source terms in appropriate Sobolev spaces.\\
From now onwards all the constants appearing in the inequalities will be independent of the final time $T,$ unless specified. We also comment that we will denote many of the constants in the inequalities using the same notation although they might vary from line to line. 
\subsection{ Study of a parabolic equation}\label{veleq}
At first we consider the following linear problem
\begin{equation}\label{2.1.1}
\left\{ \begin{array}{ll}
\overline{\sigma}{\bf w}_{t}-\mu \Delta {\bf w} -(\mu+\mu{'})\nabla\mathrm{div}{\bf w}=G_{2}&\quad \mbox{in}\quad Q_{T},
\vspace{1.mm}\\
{\bf w}=0&\quad\mbox{on}\quad \Sigma_{T},\\[1.mm]
{\bf w}(0)={\bf w}_{0}&\quad \mbox{in}\quad \Omega,
\end{array}\right.
\end{equation}
where $\overline{\sigma},$ ${\bf w}_{0}$ and $G_{2}$ are known functions which are $L$-periodic in the $x$ direction.\\
Let $m$ and $M$ be positive constants such that $m<M.$ We are going to study \eqref{2.1.1} where $\overline{\sigma},$ ${\bf w}_{0}$ and $G_{2}$ satisfy the following
\begin{equation}\label{condvel}
\left\{ \begin{array}{l}
\overline{\sigma}\in L^\infty(Q_{T}),\, 0<m/2\leqslant\overline{\sigma}\leqslant 2M\, \mbox{in}\,\, Q_{T},\,
0<{m}\leqslant\overline{\sigma}(\cdot,0)\leqslant M\,\mbox{in}\,\, \Omega,\\ \nabla\overline{\sigma}\in L^{2}(0,T;{\bf L}^{3}(\Omega)),\, \overline{\sigma}_{t}\in L^{2}(0,T;{L}^{3}(\Omega)),\\
\end{array}\right.
\end{equation}
and
\begin{equation}\label{condvel2}
\left\{ \begin{array}{l}
G_{2}\in L^{2}(0,T;{\bf H}^{1}(\Omega)),\,G_{2,{t}}\in L^{2}(0,T;{\bf L}^{2}(\Omega)),\\ {\bf w}_{0}\in  {\bf H}^{1}_{0}(\Omega),\\
(G_{2}\mid_{t=0}-(-\mu \Delta {\bf w}_{0} -(\mu+\mu{'})\nabla\mathrm{div}{\bf w}_{0}))\in {\bf H}^{1}_{0}(\Omega).\\
\end{array}\right.
\end{equation}
 The following theorem corresponds to the existence and the regularity properties of the solution ${\bf w}$ of the system \eqref{2.1.1}.
\begin{thm}\label{t21} Let $m,$ $M$ be positive constants such that $m<M.$ Then for all $\overline{\sigma},$ $G_{2}$ and ${\bf w}_{0}$ satisfying \eqref{condvel} and \eqref{condvel2}, there exists a unique solution ${\bf w}$ of \eqref{2.1.1} which satisfies the following 
	\begin{equation}\label{spacvel}
	\begin{array}{ll}
	 &{\bf w}\in L^{2}(0,T;{\bf H}^{3}(\Omega))\cap C^{0}([0,T];{\bf H}^{5/2}(\Omega)),\,{\bf w}_{t}\in L^{2}(0,T;{\bf H}^{2}(\Omega))\cap C^{0}([0,T];{\bf H}^{1}(\Omega)),\\
	 &{\bf w}_{tt}\in L^{2}(0,T;{\bf L}^{2}(\Omega)).
	 \end{array}
	 \end{equation}
	  Besides, there exists a constant $c_{1}$ (depending on $m$ and $M$ but independent of $T,$ $\overline{\sigma},$ $G_{2}$ and ${\bf w}_{0}$) such that ${\bf w}$ satisfies the following inequality
	\begin{equation}\label{2.1.2}
	\begin{split}
	&\|{\bf w} \|_{L^{\infty}(0,T;{\bf H}^{2}(\Omega))}+\|{\bf w} \|_{L^{2}(0,T;{\bf H}^{3}(\Omega))}+\|{\bf w}_{t} \|_{L^{\infty}(0,T;{\bf H}^{1}(\Omega))}+\|{\bf w}_{t} \|_{L^{2}(0,T;{\bf H}^{2}(\Omega))}\\
	&+\|{\bf w}_{tt}\|_{L^{2}(0,T;{\bf L}^{2}(\Omega))}
	\leqslant c_{1}\{\|G_{2}\|_{L^{2}(0,T;{\bf H}^{1}(\Omega))} +\|G_{2} \|_{L^{\infty}(0,T;{\bf L}^{2}(\Omega))}\\
	&+\left(\|G_{2,t} \|_{L^{2}(0,T;{\bf L}^{2}(\Omega)}
	+\left\|\frac{ G_{2}\mid_{t=0}-(-\mu \Delta {\bf w}_{0} -(\mu+\mu{'})\nabla\mathrm{div}{\bf w}_{0})}{\overline{\sigma}(0)}\right\|_{{\bf H}^{1}(\Omega)}\right)\cdot (1+\|\overline{\sigma}_{t}\|_{L^{2}(0,T;L^{3}(\Omega))}\\
	&+\|\nabla\overline{\sigma}\|_{L^{2}(0,T;{\bf L}^{3}(\Omega))})
	\cdot\mathrm{exp}(c_{1}\|\overline{\sigma}_{t}\|^{2}_{L^{2}(0,T;L^{3}(\Omega))})\}.
	\end{split}
	\end{equation}
\end{thm}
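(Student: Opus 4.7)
Plan: Existence and uniqueness in Theorem \ref{t21} will be obtained by a standard Galerkin or regularization argument, so the heart of the proof is the derivation of \eqref{2.1.2} with a constant $c_1$ depending only on $m$, $M$ and on $\Omega$ but not on $T$, $\overline{\sigma}$ or the source data. I would perform all a priori estimates on a sequence of smooth approximate solutions and then pass to the limit; uniqueness follows at once by applying the same estimates to the difference of two solutions with zero data, since the lower bound $\overline{\sigma}\ge m/2$ gives coercivity of the zeroth-order term.

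The first layer of estimates comes from testing the equation with ${\bf w}_t$. Using $\overline{\sigma}\ge m/2$ and integrating by parts on the Lam\'e operator yields
\[
\tfrac{m}{2}\|{\bf w}_t\|_{L^2}^2 + \tfrac{1}{2}\tfrac{d}{dt}\bigl(\mu\|\nabla {\bf w}\|_{L^2}^2 + (\mu+\mu')\|\mathrm{div}\,{\bf w}\|_{L^2}^2\bigr) \le \|G_2\|_{L^2}\|{\bf w}_t\|_{L^2},
\]
which after Young's inequality and integration in time gives an $L^\infty({\bf H}^1_0)$ bound on ${\bf w}$ and an $L^2({\bf L}^2)$ bound on ${\bf w}_t$. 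Combined with classical Lam\'e elliptic regularity applied to the equation rewritten as $-\mu\Delta {\bf w}-(\mu+\mu')\nabla\mathrm{div}\,{\bf w}=G_2-\overline{\sigma}{\bf w}_t$, this already produces an $L^\infty({\bf H}^2)$ bound on ${\bf w}$ controlled by $\|G_2\|_{L^\infty L^2}$ and by $\|{\bf w}_t\|$ in $L^\infty L^2$.

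The second, more delicate layer comes from differentiating the equation once in time,
\[
\overline{\sigma}{\bf w}_{tt} + \overline{\sigma}_t {\bf w}_t - \mu\Delta {\bf w}_t - (\mu+\mu')\nabla\mathrm{div}\,{\bf w}_t = G_{2,t},
\]
testing with ${\bf w}_{tt}$, and using the initial value ${\bf w}_t(0)=[G_2\mid_{t=0}+\mu\Delta {\bf w}_0+(\mu+\mu')\nabla\mathrm{div}\,{\bf w}_0]/\overline{\sigma}(0)\in {\bf H}^1_0(\Omega)$, which is exactly where the compatibility hypothesis in \eqref{condvel2} enters. The only troublesome term is $\int\overline{\sigma}_t\,{\bf w}_t\cdot {\bf w}_{tt}$; by the H\"older chain $L^3\!\cdot\! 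L^6\!\cdot\! L^2$ and the embedding ${\bf H}^1_0\hookrightarrow {\bf L}^6$ it is bounded by $\|\overline{\sigma}_t\|_{L^3}\|\nabla {\bf w}_t\|_{L^2}\|{\bf w}_{tt}\|_{L^2}$. Absorbing a quarter of $m\|{\bf w}_{tt}\|_{L^2}^2$ into the coercive term $\int\overline{\sigma}|{\bf w}_{tt}|^2$ reduces the identity to a Gronwall inequality in $y(t)=\|\nabla {\bf w}_t\|_{L^2}^2+\|\mathrm{div}\,{\bf w}_t\|_{L^2}^2$ with integrating factor $\exp\bigl(c\int_0^t\|\overline{\sigma}_t\|_{L^3}^2\bigr)$; this produces simultaneously the $L^\infty({\bf H}^1_0)$ bound on ${\bf w}_t$, the $L^2({\bf L}^2)$ bound on ${\bf w}_{tt}$, and the explicit $\exp(c_1\|\overline{\sigma}_t\|_{L^2 L^3}^2)$ prefactor in \eqref{2.1.2}.

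The remaining regularities follow by elliptic bootstrap. Reading the time-differentiated equation as $-\mu\Delta {\bf w}_t -(\mu+\mu')\nabla\mathrm{div}\,{\bf w}_t = G_{2,t} - \overline{\sigma}{\bf w}_{tt} - \overline{\sigma}_t {\bf w}_t$ and using the previous bounds with $\overline{\sigma}_t\in L^2(L^3)$ yields the $L^2({\bf H}^2)$ bound on ${\bf w}_t$; then reading the original equation in ${\bf H}^1$ and estimating $\|\overline{\sigma}{\bf w}_t\|_{H^1}\le \|\overline{\sigma}\|_{L^\infty}\|{\bf w}_t\|_{H^1}+C\|\nabla\overline{\sigma}\|_{L^3}\|{\bf w}_t\|_{L^6}$ yields the $L^2({\bf H}^3)$ bound, which together with the previous estimates explains the prefactor $(1+\|\overline{\sigma}_t\|_{L^2 L^3}+\|\nabla\overline{\sigma}\|_{L^2 L^3})$ in \eqref{2.1.2}. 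The continuities $C^0({\bf H}^{5/2})$ for ${\bf w}$ and $C^0({\bf H}^1)$ for ${\bf w}_t$ then follow from the standard intermediate embeddings $L^2(H^3)\cap H^1(H^2)\hookrightarrow C^0(H^{5/2})$ and $L^2(H^2)\cap H^1(L^2)\hookrightarrow C^0(H^1)$. The main obstacle throughout is the $T$-independence of $c_1$: every Young inequality, every Gronwall step and every elliptic estimate must be arranged so that no positive power of $T$ is hidden inside the constant, which is indispensable for the Schauder fixed-point argument of Section \ref{fixdpt}.
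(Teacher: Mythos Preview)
Your sketch is correct and follows exactly the strategy of Valli--Zaj\c{a}czkowski \cite[Lemma~2.1]{vallizak}, which is precisely what the paper invokes: the paper itself gives no argument for Theorem~\ref{t21} beyond citing that reference and noting that the adaptation to the $L$-periodic setting is routine. Your two-layer energy scheme (test with ${\bf w}_t$, then differentiate in time and test with ${\bf w}_{tt}$, handling the $\overline{\sigma}_t{\bf w}_t\cdot{\bf w}_{tt}$ term via the $L^3\!\cdot\!L^6\!\cdot\!L^2$ H\"older split and Gronwall), followed by Lam\'e elliptic regularity to climb to $L^2({\bf H}^3)$ and $L^2({\bf H}^2)$ for ${\bf w}_t$, is exactly the content of that lemma, and your identification of where the factors $\exp(c_1\|\overline{\sigma}_t\|_{L^2L^3}^2)$ and $(1+\|\overline{\sigma}_t\|_{L^2L^3}+\|\nabla\overline{\sigma}\|_{L^2L^3})$ arise is accurate.
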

\begin{remark}
	Observe from \eqref{spacvel} that ${\bf w}\in C^{0}([0,T];{\bf H}^{5/2}(\Omega))$ but in \eqref{2.1.2} we only include the estimate of $\|{\bf w}\|_{L^{\infty}(0,T;{\bf H}^{2}(\Omega))}$ and not of $\|{\bf w}\|_{L^{\infty}(0,T;{\bf H}^{5/2}(\Omega))}.$ Using interpolation one can recover an estimate of $\|{\bf w}\|_{L^{\infty}(0,T;{\bf H}^{5/2}(\Omega))}$ from the estimates of $\|{\bf w}\|_{L^{2}(0,T;{\bf H}^{3}(\Omega))}$ and $\|{\bf w}_{t}\|_{L^{2}(0,T;{\bf H}^{2}(\Omega))}$ where the constant of interpolation may depend on the final time $T.$ 
\end{remark}
\begin{remark}
	Using \eqref{condvel2} let us observe that $G_{2}\in L^{2}(0,T;{\bf H}^{1}(\Omega))\cap H^{1}(0,T;{\bf L}^{2}(\Omega))$ and hence by interpolation $G_{2}\mid_{t=0}\in {\bf H}^{1/2}(\Omega).$ Now from \eqref{condvel2}$_{3}$ one gets that 
	 $$(-\mu \Delta {\bf w}_{0} -(\mu+\mu{'})\nabla\mathrm{div}{\bf w}_{0})\in {\bf H}^{1/2}(\Omega).$$ 
	 The elliptic regularity result furnishes that ${\bf w}_{0}\in {\bf H}^{5/2}(\Omega).$ Since ${\bf w}\in C^{0}([0,T];{\bf H}^{5/2}(\Omega)),$ for the linear equation \eqref{2.1.1} we do not loose any regularity as time evolves. 
\end{remark}
\begin{proof}[Proof of Theorem \ref{t21}]
	In the context of a smooth domain and with homogeneous Dirichlet boundary condition Theorem \ref{t21} is proved in the article \cite{vallizak}. There is no particular difficulty to adapt the same proof in $\Omega$ with $L$-periodic (in the $x$ direction) boundary condition. Hence we refer the readers to the proofs of \cite[Lemma 2.1]{vallizak}. For a related result we also refer the reader to \cite[Lemma 2.2]{valliperiodic}. 
	\end{proof}
		\subsection{Study of a continuity equation}\label{deneq}
		In this section we consider the following linear problem
		\begin{equation}\label{2.2.1}
		\left\{ \begin{array}{ll}
		\sigma_{t}+\overline {\bf w}\cdot \nabla\sigma=G_{1}\quad& \mbox{in}\quad Q_{T},
		\vspace{1.mm}\\
		\sigma(0)=\sigma_{0}\quad &\mbox{in}\quad \Omega,
		\end{array}\right.
		\end{equation}
		where the functions $\overline{\bf w},$ $G_{1}$ and $\sigma_{0}$ are $L$-periodic (in the $x$ direction) functions. The following theorem asserts the existence and regularity of the solution $\sigma$ of the density equation \eqref{2.2.1}.
		\begin{thm}\label{t22}
			Let $\overline {\bf w}\in L^{1}(0,T;{\bf H}^{3}(\Omega)),\,\overline {\bf w}\cdot {\bf n}= 0\,\mbox{on}\,\Sigma_{T},\,\sigma_{0}\in H^{2}(\Omega)\,\mbox{and}\,\, G_{1}\in L^{1}(0,T;H^{2}(\Omega)).$ Then there exists a unique solution $\sigma$ of \eqref{2.2.1} such that $\sigma\in C^{0}([0,T];H^{2}(\Omega))$ and 
			\begin{equation}\label{2.2.2}
			\begin{array}{l}
			\|\sigma\|_{L^{\infty}(0,T;H^{2}(\Omega))}\leqslant(\|\sigma_{0}\|_{H^{2}(\Omega)}+c_{2}\| G_{1}\|_{L^{1}(0,T;H^{2}(\Omega))})\mathrm{exp}(c_{2}\|\overline {\bf w}\|_{L^{1}(0,T;{\bf H}^{3}(\Omega))}).
			\end{array}
			\end{equation}
			If in addition $G_{1}\in L^{\infty}(0,T; H^{1}(\Omega))$ and $\overline{\bf w}\in L^{\infty}(0,T;{\bf H}^{2}(\Omega))$ then $\sigma_{t}\in L^{\infty}(0,T; H^{1}(\Omega))$ and
			\begin{equation}\label{2.2.3}
			\begin{split}
			\|\sigma_{t}\|_{L^{\infty}(0,T; H^{1}(\Omega))}\leqslant &c_{3}\|\overline {\bf w}\|_{L^{\infty}(0,T;{\bf H}^{2}(\Omega))}(\|\sigma_{0}\|_{H^{2}(\Omega)}+c_{2}\| G_{1}\|_{L^{1}(0,T;H^{2}(\Omega))})\\
			&\qquad\qquad\qquad\qquad\cdot\mathrm{exp}(c_{2}\|\overline {\bf w}\|_{L^{1}(0,T;{\bf H}^{3}(\Omega))})
			+\| G_{1}\|_{L^{\infty}(0,T; H^{1}(\Omega))}.
			\end{split}
			\end{equation}
			The constants $c_{2}$ and $c_{3}$ appearing respectively in \eqref{2.2.2} and \eqref{2.2.3} are independent of $T,$ $\overline{\bf w},$ $\sigma_{0}$ and $G_{1}.$
		\end{thm}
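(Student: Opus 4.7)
The plan is first to derive the a priori estimate \eqref{2.2.2} for smooth solutions; existence and uniqueness will then follow by a standard regularization argument, since uniqueness is an $L^2$ energy estimate for the difference of two solutions, and existence can be obtained by smoothing the data, solving along the characteristics of $\overline{\bf w}$, and passing to the limit using the a priori bound. Because $\overline{\bf w}\cdot{\bf n}=0$ on $\Sigma_T$, the characteristic flow $X(\cdot,t,x)$ stays inside $\Omega$, so the classical representation $\sigma(x,t)=\sigma_0(X(0,t,x))+\int_0^t G_1(X(s,t,x),s)\,ds$ is well-defined at the regularized level and yields the periodicity in $x$ automatically.

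For the $H^2$ estimate, I would apply $\partial^\alpha$ with $|\alpha|\leqslant 2$ to \eqref{2.2.1}, obtaining
$$\partial_t(\partial^\alpha\sigma)+\overline{\bf w}\cdot\nabla(\partial^\alpha\sigma)=\partial^\alpha G_1-[\partial^\alpha,\overline{\bf w}\cdot\nabla]\sigma.$$
Multiplying by $\partial^\alpha\sigma$ and integrating over $\Omega$, the transport term yields
$$\int_\Omega (\overline{\bf w}\cdot\nabla(\partial^\alpha\sigma))\,\partial^\alpha\sigma\,dx=-\frac{1}{2}\int_\Omega(\mathrm{div}\,\overline{\bf w})|\partial^\alpha\sigma|^2\,dx,$$
the boundary contribution vanishing thanks to the no-penetration condition. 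Since $\dim\Omega=2$, the commutator $[\partial^\alpha,\overline{\bf w}\cdot\nabla]\sigma$ is controlled by $C\|\overline{\bf w}\|_{{\bf H}^3}\|\sigma\|_{H^2}$ via Moser-type product inequalities together with the Banach algebra property of $H^2(\Omega)$. Summing over $|\alpha|\leqslant 2$ and integrating in time produces an inequality of the form
$$\|\sigma(t)\|_{H^2}\leqslant \|\sigma_0\|_{H^2}+c\int_0^t\|G_1(s)\|_{H^2}\,ds+c\int_0^t\|\overline{\bf w}(s)\|_{{\bf H}^3}\|\sigma(s)\|_{H^2}\,ds,$$
and Gronwall's lemma in integral form delivers \eqref{2.2.2}. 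Continuity in time, $\sigma\in C^0([0,T];H^2(\Omega))$, is then standard once weak continuity and the energy identity are in hand.

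The bound \eqref{2.2.3} on $\sigma_t$ is immediate from here: reading $\sigma_t=G_1-\overline{\bf w}\cdot\nabla\sigma$ directly from \eqref{2.2.1} and applying the product estimate $\|\overline{\bf w}\cdot\nabla\sigma\|_{H^1}\leqslant C\|\overline{\bf w}\|_{{\bf H}^2}\|\sigma\|_{H^2}$ (valid in two dimensions thanks to $H^2\hookrightarrow L^\infty$ and $H^1\hookrightarrow L^p$ for all finite $p$), then inserting \eqref{2.2.2}, gives \eqref{2.2.3}. The main technical obstacle, as usual for the transport equation at the $H^2$ level, is the commutator estimate: one must spend one full derivative more on the drift than on the solution, which is exactly why the hypothesis $\overline{\bf w}\in L^1(0,T;{\bf H}^3)$ is imposed, and the argument must carefully avoid any absorption that would produce a $T$-dependent constant, which is why the final bound appears in the exponentiated Gronwall form involving $\|\overline{\bf w}\|_{L^1(0,T;{\bf H}^3)}$ rather than $\|\overline{\bf w}\|_{L^\infty(0,T;{\bf H}^3)}$.
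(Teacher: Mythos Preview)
Your proposal is correct and follows essentially the same route as the paper: energy estimates obtained by differentiating the equation, integrating by parts using $\overline{\bf w}\cdot{\bf n}=0$, controlling the commutator by $\|\overline{\bf w}\|_{{\bf H}^3}\|\sigma\|_{H^2}$, and applying Gronwall; then reading $\sigma_t$ directly from the equation. The only cosmetic differences are that the paper writes out the $L^2$, gradient, and second-derivative levels separately (handling the worst cross term $\int|D^2\overline{\bf w}||\nabla\sigma||D^2\sigma|$ by the explicit H\"older split $L^3\times L^6\times L^2$ rather than invoking Moser-type estimates), and that it obtains existence and the $C^0([0,T];H^2)$ regularity directly from the characteristics representation instead of through a regularization argument.
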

		\begin{proof}
			The theorem is proved in \cite[Lemma 2.4]{vallizak} with a particular expression of the function $G_{1}.$ In our case we adapt the same proof with minor changes.\\ 
			The existence of solution of \eqref{2.2.1} follows from the method of characteristics. The representation formula for the solution $\sigma$ is
			\begin{equation}\label{2.2.10}
			\begin{array}{l}
			\displaystyle
			\sigma(x,t)=\sigma_{0}(U(x,0,t))+\int\limits_{0}^{t}G_{1}(U(x,s,t),s)ds,
			\end{array}
			\end{equation}
			where $U(x,t,s)$ solves the following ODE
			\begin{equation}\label{2.2.11}
			\left\{ \begin{array}{ll}
			\displaystyle
			\frac{d}{dt}U(x,t,s)=\overline{\bf w}(U(x,t,s),t)\quad&\mbox{in}\quad Q_{T},
			\vspace{1.mm}\\
			\displaystyle
			U(x,s,s)=x\quad&\mbox{in}\quad \Omega.
			\end{array}\right.
			\end{equation}
			Observe 
			$$U(\cdot,\cdot,\cdot)\in C^{0}([0,T]\times[0,T];{\bf H}^{3}(\Omega))$$
			and consequently 
			$$\sigma(\cdot,\cdot)\in C^{0}([0,T];H^{2}(\Omega)).$$
			Now to prove the estimate \eqref{2.2.2}, we multiply \eqref{2.2.1}$_{1}$ by $\sigma$ and integrate in $\Omega.$ Integrating by parts the term $\displaystyle\int_{\Omega}\overline{\bf w}\cdot\nabla\sigma\sigma$ and using the fact that ${\overline{\bf w}}\cdot n=0$ we obtain
			\begin{equation}\nonumber
			\begin{array}{l}
			\displaystyle
			\frac{1}{2}\frac{d}{dt}\|\sigma\|^{2}_{L^{2}(\Omega)}\leqslant\frac{1}{2}\int_{\Omega}\mbox{div}{\overline{\bf w}}\sigma^{2}+\| G_{1}\|_{L^{2}(\Omega)}\|\sigma\|_{L^{2}(\Omega)}.
			\end{array}
			\end{equation}
			Due to the embedding ${\bf H}^{3}(\Omega)\hookrightarrow C^{1}(\overline{\Omega})$ one has
			\begin{equation}\label{2.2.4}
			\begin{array}{l}
			\displaystyle
			\frac{d}{dt}\|\sigma\|^{2}_{L^{2}(\Omega)}\leqslant c(\|{\overline{\bf w}}\|_{{\bf H}^{3}(\Omega)}\|\sigma\|_{L^{2}(\Omega)}^{2}+\| G_{1}\|_{L^{2}(\Omega)}\|\sigma\|_{L^{2}(\Omega)}).
			\end{array}
			\end{equation}
			Before going into the next estimate let us observe that
			\begin{equation}\label{2.2.5}
			\begin{array}{l}
			\displaystyle
			\int\limits_{\Omega}[(\overline{\bf w}\cdot\nabla)\nabla\sigma]\cdot\nabla\sigma=-\frac{1}{2}\int\limits_{\Omega}(\mbox{div}{\overline{\bf w}})|\nabla\sigma|^{2}.
			\end{array}
			\end{equation}
			Now take the gradient of \eqref{2.2.1}$_{1}$, multiply by $\nabla\sigma$ and integrate in $\Omega.$ Using \eqref{2.2.5} one obtains
			\begin{equation}\label{2.2.6}
			\begin{array}{l}
			\displaystyle
			\frac{d}{dt}\int_{\Omega}|\nabla\sigma|^{2}\leqslant c(\|{\overline{\bf w}}\|_{{\bf H}^{3}(\Omega)}\|\nabla\sigma\|_{{\bf L}^{2}(\Omega)}^{2}+\|\nabla G_{1}\|_{{\bf L}^{2}(\Omega)}\|\nabla\sigma\|_{{\bf L}^{2}(\Omega)}).
			\end{array}
			\end{equation}
			In a similar way for the second derivative we have
			\begin{equation}\label{2.2.7}
			\begin{array}{ll}
			\displaystyle
			\frac{d}{dt}\int\limits_{\Omega}|D^{2}\sigma|^{2}\leqslant c\big(\|{\overline{\bf w}}\|_{{\bf H}^{3}(\Omega)}\|D^{2}\sigma\|^{2}_{{\bf L}^{2}(\Omega)}+\int_{\Omega} | D^{2}{\overline{\bf w}}||\nabla\sigma||D^{2}\sigma|+\| D^{2}G_{1}\|_{{\bf L}^{2}(\Omega)}\|D^{2}\sigma\|_{{\bf L}^{2}(\Omega)}\big).
			\end{array}
			\end{equation}
			One has the following estimate
			\begin{equation}\label{2.2.8}
			\begin{split}
			\int_{\Omega} |D^{2}{\overline{\bf w}}||\nabla\sigma||D^{2}\sigma|&\leqslant \| D^{2}{\overline{\bf w}}\|_{{\bf L}^{3}(\Omega)}  \|\nabla\sigma\|_{{\bf L}^{6}(\Omega)}  \|D^{2}\sigma\|_{{\bf L}^{2}(\Omega)} 
			\vspace{1.mm}\\
			& \leqslant c\| D^{2}{\overline{\bf w}}\|_{{\bf L}^{3}(\Omega)}  \|\nabla\sigma\|_{ {\bf H}^{1}(\Omega)}\|D^{2}\sigma\|_{{\bf L}^{2}(\Omega)}.
			\end{split}
			\end{equation}
			The estimates \eqref{2.2.4} and \eqref{2.2.6}-\eqref{2.2.7}-\eqref{2.2.8} furnish the following
			\begin{equation}\label{2.2.9}
			\begin{array}{l}
			\displaystyle
			\frac{1}{2}\frac{d}{dt}\|\sigma\|^{2}_{H^{2}(\Omega)}\leqslant c(\|\overline{\bf w}\|_{{\bf H}^{3}(\Omega)}\|\sigma\|^{2}_{H^{2}(\Omega)}+\| G_{1}\|_{H^{2}(\Omega)}\|\sigma\|_{H^{2}(\Omega)}).
			\end{array}
			\end{equation}
			Now \eqref{2.2.2} is a consequence of \eqref{2.2.9} and Gronwall lemma.
			Finally the estimate \eqref{2.2.3} is a direct consequence of  \eqref{2.2.1}$_{1}$ and \eqref{2.2.2}.
		\end{proof}
		The following corollary directly follows from \eqref{2.2.1}$_{1}$ and the regularity $\sigma\in C^{0}([0,T];H^{2}(\Omega))$ which we have obtained in Theorem \ref{t22}.
		\begin{corollary}\label{dencor}
			In addition to the assumptions of Theorem \ref{t22} if $G_{1}\in C^{0}([0,T]; H^{1}(\Omega))$ and $\overline{\bf w}\in C^{0}([0,T];{\bf H}^{2}(\Omega))$ then $\sigma_{t}\in C^{0}([0,T]; H^{1}(\Omega)).$
		\end{corollary}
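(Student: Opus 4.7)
The plan is to read the regularity of $\sigma_t$ directly off the transport equation \eqref{2.2.1}$_1$, which gives the pointwise-in-$t$ identity
\[
\sigma_t(\cdot,t)=G_1(\cdot,t)-\overline{\bf w}(\cdot,t)\cdot\nabla\sigma(\cdot,t).
\]
Since $G_1\in C^0([0,T];H^1(\Omega))$ by hypothesis, the whole problem reduces to showing that the nonlinear product $\overline{\bf w}\cdot\nabla\sigma$ belongs to $C^0([0,T];H^1(\Omega))$.

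From Theorem \ref{t22} I already know that $\sigma\in C^0([0,T];H^2(\Omega))$, hence $\nabla\sigma\in C^0([0,T];{\bf H}^1(\Omega))$; combined with the added hypothesis $\overline{\bf w}\in C^0([0,T];{\bf H}^2(\Omega))$, I would invoke the two-dimensional bilinear estimate
\[
\|fg\|_{H^1(\Omega)}\leqslant C\,\|f\|_{H^2(\Omega)}\|g\|_{H^1(\Omega)},
\]
which follows from $H^2(\Omega)\hookrightarrow L^\infty(\Omega)\cap W^{1,p}(\Omega)$ and $H^1(\Omega)\hookrightarrow L^q(\Omega)$ for all finite $p,q$ in dimension two, combined with Hölder's inequality applied componentwise to $\nabla(fg)=f\nabla g+g\nabla f$. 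Applying this with $f$ running over the components of $\overline{\bf w}$ and $g$ over the components of $\nabla\sigma$ yields the pointwise-in-$t$ bound
\[
\|\overline{\bf w}(t)\cdot\nabla\sigma(t)\|_{H^1(\Omega)}\leqslant C\,\|\overline{\bf w}(t)\|_{{\bf H}^2(\Omega)}\|\sigma(t)\|_{H^2(\Omega)}.
\]

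To upgrade the pointwise bound to continuity in $t$, I would split the difference
\[
(\overline{\bf w}\cdot\nabla\sigma)(t)-(\overline{\bf w}\cdot\nabla\sigma)(t_0)=[\overline{\bf w}(t)-\overline{\bf w}(t_0)]\cdot\nabla\sigma(t)+\overline{\bf w}(t_0)\cdot[\nabla\sigma(t)-\nabla\sigma(t_0)]
\]
and apply the same bilinear estimate to each piece. The first term is dominated by $\|\overline{\bf w}(t)-\overline{\bf w}(t_0)\|_{{\bf H}^2(\Omega)}\|\sigma(t)\|_{H^2(\Omega)}$ and the second by $\|\overline{\bf w}(t_0)\|_{{\bf H}^2(\Omega)}\|\sigma(t)-\sigma(t_0)\|_{H^2(\Omega)}$; both tend to zero as $t\to t_0$ by the respective $C^0$-in-time regularities of $\overline{\bf w}$ and $\sigma$. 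Combined with the continuity of $G_1$ in $H^1(\Omega)$, this yields $\sigma_t\in C^0([0,T];H^1(\Omega))$ via the identity displayed at the outset.

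There is no serious obstacle here: the statement is essentially an immediate corollary of the regularity $\sigma\in C^0([0,T];H^2(\Omega))$ already provided by Theorem \ref{t22}. The only point deserving any care is the multiplier estimate $H^2\cdot H^1\hookrightarrow H^1$, which relies crucially on the two-dimensionality of $\Omega$ through the embedding $H^1(\Omega)\hookrightarrow L^q(\Omega)$ for every finite $q$.
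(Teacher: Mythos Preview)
Your argument is correct and matches the paper's approach: the paper simply states that the corollary ``directly follows from \eqref{2.2.1}$_{1}$ and the regularity $\sigma\in C^{0}([0,T];H^{2}(\Omega))$,'' which is precisely the identity $\sigma_t=G_1-\overline{\bf w}\cdot\nabla\sigma$ combined with the product estimate you spell out (the paper's Lemma~\ref{lfpss} with $r=2$, $s=1$). You have merely written out the details the paper leaves implicit.
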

			\subsection{Study of a linear beam equation}\label{beameq}
			The linearized beam equation with a non homogeneous source term is the following
			\begin{equation}\label{2.3.1}
			\left\{ \begin{array}{ll}
			\eta_{tt}-\beta \eta_{xx}-  \delta\eta_{txx}+\alpha\eta_{xxxx}=G_{3} \quad& \mbox{in}\quad\Sigma^{s}_{T},
			\vspace{1.mm}\\
			\eta(0)=0\quad\mbox{and}\quad \eta_{t}(0)=\eta_{1}\quad&\mbox{in}\quad \Gamma_{s},
			\end{array}\right. 
			\end{equation}
			where $G_{2}$ and $\eta_{1}$ are known $L$-periodic (in the $x$ direction) functions.
			Let us denote
			\begin{equation}\label{2.3.2}
			\mathcal{A}=\begin{bmatrix}
			0 & & I\\[1.mm]
			-\alpha\Delta^{2}+\beta\Delta & & \delta\Delta
			\end{bmatrix}.
			\end{equation}
			The unbounded operator $(\mathcal{A},D(\mathcal{A}))$ is defined in
			\begin{equation}\label{doHs}
			\begin{array}{l}
			H_{s}=H^{2}(\Gamma_{s})\times L^{2}(\Gamma_{s}),
			\end{array}
			\end{equation}
			with domain
			$$\quad D(\mathcal{A})=H^{4}(\Gamma_{s})\times H^{2}(\Gamma_{s}).$$
			Hence with the notations
			\begin{equation}\label{vecnot}
			\begin{array}{l}
			$${\bf Y}(t)=\begin{bmatrix}
			\eta(t)\\[1.mm]
			\eta_{t}(t)\\
			\end{bmatrix},\quad {\bf Y}_{0}=\begin{bmatrix}
			0\\[1.mm]
			\eta_{1}\\
			\end{bmatrix}\quad\mbox{and}\quad \widetilde{G}_{3}=\begin{bmatrix}
			0\\[1.mm]
			G_{3}
			\end{bmatrix},
			\end{array}
			\end{equation}
			we can equivalently write \eqref{2.3.1} as
			\begin{equation}\label{2.3.3}
			\left\{ \begin{array}{ll}
			{\bf Y}_{t}(t)=\mathcal{A}{\bf Y}(t)+\widetilde{G}_{3}\,&\mbox{on}\quad (0,T),
			\vspace{1.mm}\\
			{\bf Y}(0)={\bf Y}_{0}.
			\end{array}\right.
			\end{equation}
			\begin{lem}\label{t23p}
				Let 
				\begin{equation}\label{gy0}
				\begin{array}{l}
				\widetilde{G}_{3}\in L^{2}(0,T;{H}^{2}(\Gamma_{s})\times L^{2}(\Gamma_{s}))\quad \mbox{and}\quad {\bf Y}_{0}\in H^{3}(\Gamma_{s})\times H^{1}(\Gamma_{s}).
				\end{array}
				\end{equation}
				 Then the equation \eqref{2.3.3} admits a unique solution ${\bf Y}$ which satisfies
				\begin{equation}\label{regY}
				\begin{array}{l}
				{\bf Y}\in L^{2}(0,T;H^{4}(\Gamma_{s})\times H^{2}(\Gamma_{s}))\cap H^{1}(0,T;H^{2}(\Gamma_{s})\times L^{2}(\Gamma_{s}))\cap C^{0}([0,T];H^{3}(\Gamma_{s})\times H^{1}(\Gamma_{s})).
				\end{array}
				\end{equation}
			In addition if
			\begin{equation}\label{gy01}
		    \begin{array}{l}
			 \widetilde{G}_{3,t}\in L^{2}(0,T;{H}^{2}(\Gamma_{s})\times L^{2}(\Gamma_{s}))\quad \mbox{and}\quad \mathcal{A}{\bf Y}_{0}+\widetilde{G}_{3}\mid_{t=0}\in  H^{3}(\Gamma_{s})\times  H^{1}(\Gamma_{s}),
			 \end{array}
			 \end{equation}
			the solution ${\bf Y}$ of the problem \eqref{2.3.3} has the following additional regularities
			\begin{equation}\label{regY2}
			\begin{array}{l}
			{\bf Y}_{t}\in L^{2}(0,T; H^{4}(\Gamma_{s})\times H^{2}(\Gamma_{s}))\cap C^{0}([0,T];H^{3}(\Gamma_{s})\times  H^{1}(\Gamma_{s})),
			\vspace{1.mm}\\
			{\bf Y}_{tt}\in L^{2}(0,T; H^{2}(\Gamma_{s})\times L^{2}(\Gamma_{s})).
			\end{array}
			\end{equation}
			\end{lem}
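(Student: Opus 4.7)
The plan is to recast \eqref{2.3.1} in the abstract form \eqref{2.3.3} and exploit the fact, recalled in the discussion before the statement and established in \cite{chen,fanli}, that the operator $(\mathcal{A},D(\mathcal{A}))$ defined in \eqref{2.3.2} generates an analytic $C_0$-semigroup $(e^{t\mathcal{A}})_{t\geqslant 0}$ on $H_s=H^{2}(\Gamma_s)\times L^{2}(\Gamma_s)$. Once this is granted, the whole statement reduces to a single abstract tool: the maximal $L^{2}$-regularity theorem for generators of analytic semigroups, which asserts that for every $\mathbf{F}\in L^{2}(0,T;H_s)$ and every $\mathbf{Y}_0$ lying in the real interpolation trace space $[H_s,D(\mathcal{A})]_{1/2,2}$, the Cauchy problem $\mathbf{Y}_t=\mathcal{A}\mathbf{Y}+\mathbf{F}$, $\mathbf{Y}(0)=\mathbf{Y}_0$, admits a unique solution
\[
\mathbf{Y}\in L^{2}(0,T;D(\mathcal{A}))\cap H^{1}(0,T;H_s)\cap C^{0}([0,T];[H_s,D(\mathcal{A})]_{1/2,2}).
\]

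The first step is to identify this trace space. Since $\Gamma_s=\mathbb{T}_L$ is a smooth compact manifold without boundary, the standard real-interpolation identities for $L$-periodic Sobolev spaces give
\[
[H_s,D(\mathcal{A})]_{1/2,2}=[H^{2}(\Gamma_s),H^{4}(\Gamma_s)]_{1/2,2}\times[L^{2}(\Gamma_s),H^{2}(\Gamma_s)]_{1/2,2}=H^{3}(\Gamma_s)\times H^{1}(\Gamma_s),
\]
which can also be checked by Fourier series on $\mathbb{T}_L$. Hypothesis \eqref{gy0} reads exactly $\widetilde{G}_3\in L^{2}(0,T;H_s)$ and $\mathbf{Y}_0\in H^{3}(\Gamma_s)\times H^{1}(\Gamma_s)$, so an immediate application of the maximal regularity theorem with $\mathbf{F}=\widetilde{G}_3$ yields existence, uniqueness and the regularity \eqref{regY} in one shot.

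For \eqref{regY2} I would bootstrap by differentiating \eqref{2.3.3} in time. Setting $\mathbf{Z}:=\mathbf{Y}_t$, the function $\mathbf{Z}$ formally solves
\[
\mathbf{Z}_t=\mathcal{A}\mathbf{Z}+\widetilde{G}_{3,t},\qquad \mathbf{Z}(0)=\mathcal{A}\mathbf{Y}_0+\widetilde{G}_3\big|_{t=0}.
\]
The enhanced assumption \eqref{gy01} says precisely that the right-hand side of the first equation lies in $L^{2}(0,T;H_s)$ and that the new initial datum belongs to $H^{3}(\Gamma_s)\times H^{1}(\Gamma_s)$, i.e.\ once more to the trace space. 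A second application of the same maximal regularity theorem then produces a function $\mathbf{Z}$ in the same regularity class as in \eqref{regY}; identifying $\mathbf{Z}$ with $\mathbf{Y}_t$ (by uniqueness, after observing that $\tilde{\mathbf{Y}}(t):=\mathbf{Y}_0+\int_0^t\mathbf{Z}(s)\,ds$ solves the original problem \eqref{2.3.3} because $\mathcal{A}$ is closed and commutes with integration) gives exactly \eqref{regY2}.

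The main obstacle I anticipate is making the bootstrap step genuinely rigorous. Although $\mathbf{Y}_0\in H^3\times H^1$ is enough for the first application of maximal regularity, it does \emph{not} lie in $D(\mathcal{A})=H^4\times H^2$; individually $\mathcal{A}\mathbf{Y}_0$ is a priori only a distribution and the semigroup $e^{t\mathcal{A}}\mathcal{A}\mathbf{Y}_0$ is not defined on $H_s$. The whole point of the compatibility condition \eqref{gy01} is that the \emph{combination} $\mathcal{A}\mathbf{Y}_0+\widetilde{G}_3|_{t=0}$ lifts back into the trace space $H^3\times H^1\subset H_s$, which is what allows the differentiated problem to be set up in the same abstract framework as the original one. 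Beyond this, one should also verify that the analyticity results of \cite{chen,fanli} transfer to the $L$-periodic-in-$x$ setting $\Gamma_s=\mathbb{T}_L$, but this is routine since $\mathcal{A}$ diagonalises on Fourier modes and the spectral analysis of \cite{chen} applies mode by mode.
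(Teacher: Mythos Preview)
Your proposal is correct and follows essentially the same approach as the paper: invoke the analyticity of $(\mathcal{A},D(\mathcal{A}))$ from \cite{chen}, apply a maximal $L^2$-regularity / isomorphism theorem (the paper cites \cite[Theorem 3.1, p.~143]{ben}) to obtain \eqref{regY}, then differentiate in time, set $\mathbf{Z}=\mathbf{Y}_t$, and reapply the same result to the differentiated problem \eqref{2.3.7} to get \eqref{regY2}. The only cosmetic differences are that the paper identifies the trace space as $D(\mathcal{A}^{1/2})=H^3(\Gamma_s)\times H^1(\Gamma_s)$ rather than via real interpolation, and it does not spell out the justification of $\mathbf{Z}=\mathbf{Y}_t$ or the transfer to the periodic setting as you do; your extra care on those points is welcome but not a different strategy.
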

			\begin{proof}
				To prove this result we will use the maximal parabolic regularity results from \cite{ben}. Recall the definition of $H_{s}$ in \eqref{doHs}. The unbounded operator $(\mathcal{A},D(\mathcal{A}))$ is the infinitesimal generator of an analytic semigroup on $H_{s}$ (for the proof see \cite{chen}).
				    Hence using the isomorphism theorem \cite[Theorem 3.1, p. 143]{ben} 
					and the assumption \eqref{gy0}, which can be read as $\widetilde{G}_{3}\in L^{2}(0,T;H_{s})$ and ${\bf Y}_{0}\in {D}(\mathcal{A}^{1/2}),$ we get that the equation \eqref{2.3.3} admits a unique solution ${\bf Y}$ satisfying the following:
					$${\bf Y}\in L^{2}(0,T;H^{4}(\Gamma_{s})\times H^{2}(\Gamma_{s}))\cap H^{1}(0,T;H^{2}(\Gamma_{s})\times L^{2}(\Gamma_{S})).$$ 
					Using interpolation (see \cite{liomag2}) one also obtains that 
					$${\bf Y}\in C^{0}([0,T];H^{3}(\Gamma_{s})\times H^{1}(\Gamma_{s})).$$
					This proves \eqref{regY}.\\
				    Now we assume that \eqref{gy01} holds. In order to obtain the time regularity of ${\bf Y}$ let us differentiate \eqref{2.3.3} with respect to $t$ and write ${\bf Z}={\bf Y}_{t},$	\begin{equation}\label{2.3.7}
				    \left\{ \begin{array}{l}
				    {\bf Z}_{t}(t)=\mathcal{A}{\bf Z}(t)+\widetilde{G}_{3,t}\quad\mbox{on}\quad (0,T),
				    \vspace{1.mm}\\
				    {\bf Z}(0)={\bf Z}^{0}=\mathcal{A}{\bf Y}_{0}+\widetilde{G}_{3}\mid_{t=0}.
				    \end{array}\right.
				    \end{equation} 
				    Due to the assumptions \eqref{gy01}, $ \widetilde{G}_{3,t}\in L^{2}(0,T;H_{s})$ and $\mathcal{A}{\bf Y}_{0}+\widetilde{G}_{3}\mid_{t=0}\in {D}(\mathcal{A}^{1/2})$ ($=H^{3}(\Gamma_{s})\times H^{1}(\Gamma_{s})$). We can use the isomorphism theorem \cite[Theorem 3.1, p. 143]{ben} again to conclude
				    $${\bf Z}={\bf Y}_{t}\in L^{2}(0,T;H^{4}(\Gamma_{s})\times H^{2}(\Gamma_{s}))\cap H^{1}(0,T;H^{2}(\Gamma_{s})\times L^{2}(\Gamma_{s})).$$
				   Once again using interpolation we verify that
				   $${\bf Y}_{t}\in C^{0}([0,T];H^{3}(\Gamma_{s})\times H^{1}(\Gamma_{s})).$$
				    This completes the proof of Lemma \ref{t23p}.
				\end{proof}
				We are going to use the representation \eqref{2.3.3} of \eqref{2.3.1} to state the existence and regularity result for the problem \eqref{2.3.1}.
			\begin{thm}\label{t23}
				Assume that $T<\overline{T}$ (recall that $\overline{T}$ was fixed in \eqref{Tbar}), ${G}_{3}\in L^{\infty}(0,T;{H}^{1/2}(\Gamma_{s}))$ and ${G}_{3,t}\in L^{2}(0,T;{L}^{2}(\Gamma_{s})).$ Also suppose that
				$\eta_{1}\in H^{3}(\Gamma_{s})$ and
				${G}_{3}\mid_{t=0}\in H^{1}(\Gamma_{s}).$
				Then the equation \eqref{2.3.1} admits a unique solution $\eta$ which satisfies
				\begin{equation}\label{roe}
				\begin{array}{l}
				\eta\in L^{\infty}(0,T;H^{9/2}(\Gamma_{s})),
				\vspace{1.mm}\\
				\eta_{t}\in L^{2}(0,T;H^{4}(\Gamma_{s}))\cap C^{0}([0,T];H^{3}(\Gamma_{s})),
				\vspace{1.mm}\\
				\eta_{tt}\in L^{2}(0,T;H^{2}(\Gamma_{s}))\cap C^{0}([0,T]; H^{1}(\Gamma_{s})),
				\vspace{1.mm}\\
				\eta_{ttt}\in L^{2}(0,T;L^{2}(\Gamma_{s})),
				\end{array}
				\end{equation}
				and for some positive constant $c_{4}$ independent of $T,$ $G_{3}$ and $\eta_{1}$ we have the following estimate
				\begin{equation}\label{2.3.5}
				\begin{split}
				\|\eta\|_ {L^{\infty}(0,T;H^{9/2}(\Gamma_{s}))} &+\|\eta_{t}\|_{L^{2}(0,T;H^{4}(\Gamma_{s}))}+\|\eta_{t}\|_{L^{\infty}(0,T;H^{3}(\Gamma_{s}))}+\|\eta_{tt}\|_{ L^{2}(0,T;H^{2}(\Gamma_{s}))}\\
				&+\|\eta_{tt}\|_{L^{\infty}(0,T; H^{1}(\Gamma_{s}))}
				+\|\eta_{ttt}\|_{L^{2}(0,T;L^{2}(\Gamma_{s}))}\leqslant c_{4}\big(\|\eta_{1}\|_{H^{3}(\Gamma_{s})}\\
				&+\|G_{3}\mid_{t=0}\|_{ H^{1}(\Gamma_{s})}
				+\|G_{3}\|_{L^{\infty}(0,T;H^{1/2}(\Gamma_{s}))}+\|{G}_{3,t}\|_{ L^{2}(0,T;{L}^{2}(\Gamma_{s}))}\big).
				\end{split}
				\end{equation}
			\end{thm}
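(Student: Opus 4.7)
The plan is to reformulate \eqref{2.3.1} as \eqref{2.3.3} and apply Lemma \ref{t23p} twice, and then recover the remaining spatial regularity of $\eta$ from the beam equation viewed as an elliptic identity in $x$. First I would verify the hypotheses of Lemma \ref{t23p} for \eqref{2.3.3}: with ${\bf Y}_{0}=(0,\eta_{1})^{\top}$ and $\widetilde{G}_{3}=(0,G_{3})^{\top}$, the assumption $\eta_{1}\in H^{3}(\Gamma_{s})$ places ${\bf Y}_{0}$ in $H^{3}(\Gamma_{s})\times H^{1}(\Gamma_{s})$, and since $T<\overline{T}$, the trivial embedding $L^{\infty}(0,T;L^{2}(\Gamma_{s}))\hookrightarrow L^{2}(0,T;L^{2}(\Gamma_{s}))$ puts $\widetilde{G}_{3}$ in $L^{2}(0,T;H_{s})$ with a constant depending only on $\overline{T}$. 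This first application yields $\eta\in L^{2}(0,T;H^{4}(\Gamma_{s}))\cap C^{0}([0,T];H^{3}(\Gamma_{s}))$ and $\eta_{t}\in L^{2}(0,T;H^{2}(\Gamma_{s}))\cap C^{0}([0,T];H^{1}(\Gamma_{s}))$.

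Next I would check the hypotheses \eqref{gy01} needed for the second application. The condition $\widetilde{G}_{3,t}\in L^{2}(0,T;H_{s})$ follows directly from the assumption $G_{3,t}\in L^{2}(0,T;L^{2}(\Gamma_{s}))$. Moreover,
\[
\mathcal{A}{\bf Y}_{0}+\widetilde{G}_{3}\mid_{t=0}=\begin{bmatrix}\eta_{1}\\ \delta\eta_{1,xx}+G_{3}\mid_{t=0}\end{bmatrix}
\]
belongs to $H^{3}(\Gamma_{s})\times H^{1}(\Gamma_{s})$, using $\eta_{1}\in H^{3}(\Gamma_{s})$ (so $\eta_{1,xx}\in H^{1}(\Gamma_{s})$) together with the standing hypothesis $G_{3}\mid_{t=0}\in H^{1}(\Gamma_{s})$. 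Lemma \ref{t23p} then upgrades the time regularity to $\eta_{t}\in L^{2}(0,T;H^{4}(\Gamma_{s}))\cap C^{0}([0,T];H^{3}(\Gamma_{s}))$, $\eta_{tt}\in L^{2}(0,T;H^{2}(\Gamma_{s}))\cap C^{0}([0,T];H^{1}(\Gamma_{s}))$, and $\eta_{ttt}\in L^{2}(0,T;L^{2}(\Gamma_{s}))$.

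The remaining regularity $\eta\in L^{\infty}(0,T;H^{9/2}(\Gamma_{s}))$ is not given by the semigroup framework and must be extracted from the beam equation itself. Solving \eqref{2.3.1}$_{1}$ for the top-order spatial derivative,
\[
\alpha\eta_{xxxx}=G_{3}-\eta_{tt}+\beta\eta_{xx}+\delta\eta_{txx},
\]
I would bound the right-hand side in $L^{\infty}(0,T;H^{1/2}(\Gamma_{s}))$: the term $G_{3}$ directly by hypothesis, $\eta_{tt}$ via $\eta_{tt}\in C^{0}([0,T];H^{1}(\Gamma_{s}))\hookrightarrow L^{\infty}(0,T;H^{1/2}(\Gamma_{s}))$, $\eta_{xx}$ via $\eta\in C^{0}([0,T];H^{3}(\Gamma_{s}))$, and $\eta_{txx}$ via $\eta_{t}\in C^{0}([0,T];H^{3}(\Gamma_{s}))$. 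Since $\Gamma_{s}=\mathbb{T}_{L}$ is a one-dimensional torus, integrating this elliptic identity in $x$ (equivalently, using Fourier series) yields the desired $L^{\infty}(0,T;H^{9/2}(\Gamma_{s}))$ bound for $\eta$.

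The estimate \eqref{2.3.5} is then assembled by combining the two maximal-regularity estimates coming from Lemma \ref{t23p} with the elliptic one just obtained. The main obstacle, as flagged in the strategy around \eqref{Tbar}, is keeping the constant $c_{4}$ independent of $T$: the constants in the isomorphism theorem of \cite{ben} a priori depend on the length of the time interval, and the embedding $L^{\infty}\hookrightarrow L^{2}$ in time used to place $G_{3}$ in $L^{2}(0,T;L^{2}(\Gamma_{s}))$ introduces a factor $\sqrt{T}$; both are absorbed uniformly thanks to $T<\overline{T}$. With this bookkeeping handled, the remainder of the argument is routine.
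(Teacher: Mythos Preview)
Your overall architecture matches the paper's: apply Lemma \ref{t23p} at the level of ${\bf Y}$ and again at the level of ${\bf Y}_{t}$, then read off $\eta\in L^{\infty}(0,T;H^{9/2}(\Gamma_{s}))$ from the elliptic identity $\alpha\eta_{xxxx}=G_{3}-\eta_{tt}+\beta\eta_{xx}+\delta\eta_{txx}$. So the strategy is right.

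The gap is exactly where you flag it but do not close it. Saying that the $T$-dependence of the isomorphism constant from \cite{ben} is ``absorbed uniformly thanks to $T<\overline{T}$'' is not an argument: the maximal-regularity constant on $(0,T)$ is not a priori dominated by the one on $(0,\overline{T})$, and the interpolation constant giving the $C^{0}$-in-time bounds may likewise degenerate as $T\to 0$. The paper supplies two concrete mechanisms you omit. First, for the $L^{2}$-in-time estimates it extends $G_{3}$ (and then $G_{3,t}$) by zero to $(T,\overline{T})$, solves \eqref{2.3.1} on the fixed interval $(0,\overline{T})$, and restricts; this produces the constant $c(\overline{T})$ in \eqref{etaindT} and \eqref{2.3.8}. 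Second, for the $L^{\infty}$-in-time bounds it does not rely on interpolation at all but derives direct energy estimates: multiplying \eqref{2.3.1}$_{1}$ by $\eta_{txx}$ and integrating yields \eqref{2.3.11}, and the same manipulation on the time-differentiated equation \eqref{zeta} gives \eqref{2.3.13}; these inequalities carry constants that are manifestly $T$-independent. Without at least one of these devices, your claim that the bookkeeping is ``routine'' is unsupported, and the estimate \eqref{2.3.5} with a $T$-independent $c_{4}$ is not established.
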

			\begin{proof}
				We first consider
				\begin{equation}\label{pras}
				\begin{array}{l}
				G_{3}\in L^{2}(0,T;L^{2}(\Gamma_{s}))\quad\mbox{and}\quad \eta_{1}\in H^{1}(\Gamma_{s}).
				\end{array}
				\end{equation}
				In view of the notations \eqref{vecnot}, \eqref{pras} corresponds to the case \eqref{gy0} of Lemma \ref{t23p}. Hence we can use \eqref{regY} to obtain
				\begin{equation}\label{2.3.6}
				\begin{split}
				\|\eta\|_{L^{2}(0,T;H^{4}(\Gamma_{s}))}+	\|\eta_{t}\|_{L^{2}(0,T;H^{2}(\Gamma_{s}))}+\|\eta_{tt}\|_{L^{2}(0,T;L^{2}(\Gamma_{s}))}\leqslant c\big(\|\eta_{1}\|_{ H^{1}(\Gamma_{s})}  
			   +\|G_{3}\|_{L^{2}(0,T;L^{2}(\Gamma_{s}))}\big),
				\end{split}
				\end{equation}
				 where the constant $c$ might depend on the final time $T.$ We want to show that there exists a constant $c$ independent of $T$ such that the inequality \eqref{2.3.6} is true. For that we extend ${G}_{3}$ by defining it zero in $(T,\overline{T})$ and denote the extended function also by ${G}_{3}.$ Observe that ${G}_{3}\in L^{2}(0,\overline{T};L^{2}(\Gamma_{s})).$ We can solve \eqref{2.3.1} in the time interval $(0,\overline{T})$ and consequently
				\begin{equation}\label{etaindT}
				\begin{split}
				&\|\eta\|_{L^{2}(0,{T};H^{4}(\Gamma_{s}))}+	\|\eta_{t}\|_{L^{2}(0,{T};H^{2}(\Gamma_{s}))}+\|\eta_{tt}\|_{L^{2}(0,{T};L^{2}(\Gamma_{s}))}\\
				&	\leqslant\|\eta\|_{L^{2}(0,\overline{T};H^{4}(\Gamma_{s}))}+	\|\eta_{t}\|_{L^{2}(0,\overline{T};H^{2}(\Gamma_{s}))}+\|\eta_{tt}\|_{L^{2}(0,\overline{T};L^{2}(\Gamma_{s}))}\\[1.mm]
				& \leqslant c(\overline{T})\big(\|\eta_{1}\|_{ H^{1}(\Gamma_{s})}
				+\|G_{3}\|_{L^{2}(0,\overline{T};L^{2}(\Gamma_{s}))}\big)\\[1.mm]
				& = c(\overline{T})\big(\|\eta_{1}\|_{ H^{1}(\Gamma_{s})}
				+\|G_{3}\|_{L^{2}(0,{T};L^{2}(\Gamma_{s}))}\big).
				\end{split}
				\end{equation}
				So we are able to get a constant $c(\overline{T})$ which is independent of $T.$\\ 
				To prove the regularity estimates of $\eta_{t},$ we will use
				\begin{equation}\label{tasG3}
				\begin{array}{l}
				G_{3,t}\in L^{2}(0,T;L^{2}(\Gamma_{s})),\quad\eta_{1}\in H^{3}(\Gamma_{s})\quad\mbox{and}\quad G_{3}\mid_{t=0}\in H^{1}(\Gamma_{s}).
				\end{array}
				\end{equation}
				Indeed, observe that \eqref{tasG3} implies $\delta\Delta\eta_{1}+G_{3}\mid_{t=0}\in H^{1}(\Gamma_{s}).$ Now differentiate the equation \eqref{2.3.1} with respect to $t,$ 
				\begin{equation}\label{zeta}
					\left\{ \begin{array}{ll}
					(\eta_{t})_{tt}-\beta (\eta_{t})_{xx}-  \delta(\eta_{t})_{txx}+\alpha(\eta_{t})_{xxxx}=G_{3,t} \quad& \mbox{on}\quad\Sigma^{s}_{T},
					\vspace{1.mm}\\
					\eta_{t}(0)=\eta_{1}\quad\mbox{and}\quad \eta_{tt}(0)=\delta\Delta\eta_{1}+G_{3}\mid_{t=0}\quad&\mbox{in}\quad \Gamma_{s}.
					\end{array}\right.
				\end{equation}
				In view of the notations \eqref{vecnot}, \eqref{tasG3} and \eqref{zeta} correspond respectively to \eqref{gy01} and \eqref{2.3.7} in Lemma \ref{t23p}. 
				Hence we can use \eqref{regY2} to furnish the following
				 \begin{equation}\label{2.3.8*}
				 \begin{split}
				 &\|\eta_{t}\|_{L^{2}(0,T;H^{4}(\Gamma_{s}))}+	\|\eta_{tt}\|_{L^{2}(0,T;H^{2}(\Gamma_{s}))}+\|\eta_{ttt}\|_{L^{2}(0,T;L^{2}(\Gamma_{s}))}\\[1.mm]
				 &\leqslant   c\big(\|\eta_{1}\|_{H^{3}(\Gamma_{s})}+\|G_{3}\mid_{t=0}\|_{ H^{1}(\Gamma_{s})}  
				 +\|{G}_{3,t}\|_{L^{2}(0,T;L^{2}(\Gamma_{s}))}\big),
				 \end{split}
				 \end{equation}
				where the constant $c$ might depend on the final time $T.$ Since we are interested in proving \eqref{2.3.8*} with a constant $c$ independent of $T,$ we extend the function ${G}_{3,t}$ by defining it zero in the interval $(T,\overline{T})$ and denote the extended function also by ${G}_{3,t}.$ In a similar spirit of the computation \eqref{etaindT} one can prove
			    \begin{equation}\label{2.3.8}
				\begin{split}
				&\|\eta_{t}\|_{L^{2}(0,T;H^{4}(\Gamma_{s}))}+	\|\eta_{tt}\|_{L^{2}(0,T;H^{2}(\Gamma_{s}))}+\|\eta_{ttt}\|_{L^{2}(0,T;L^{2}(\Gamma_{s}))}\\[1.mm]
				&\leqslant   c(\overline{T})\big(\|\eta_{1}\|_{H^{3}(\Gamma_{s})}+\|G_{3}\mid_{t=0}\|_{ H^{1}(\Gamma_{s})}  
				+\|{G}_{3,t}\|_{L^{2}(0,T;L^{2}(\Gamma_{s}))}\big)
				\end{split}
				\end{equation}
				for some constant $c(\overline{T})$ independent on $T.$
				In order to get explicit bounds on the $L^{\infty}(0,T)$ norms of $\eta,$ $\eta_{t}$ and $\eta_{tt}$ we first multiply \eqref{2.3.1}$_{1}$ by $\eta_{txx}$ and integrate over $\Gamma_{s}.$ We use the $L$-periodicity (in the $x$ direction) of $\eta$ and integrate the terms by parts to obtain 
				\begin{equation}\label{2.3.10}
				\begin{array}{ll}
				\displaystyle
				\frac{1}{2}\frac{d}{d t}\int\limits_{\Gamma_{s}}\eta^{2}_{tx}dx+\frac{\beta}{2}\frac{d}{d t}\int\limits_{\Gamma_{s}}\eta^{2}_{xx}dx+\delta\int\limits_{\Gamma_{s}}\eta^{2}_{txx}dx+\frac{\alpha}{2}\frac{d}{d t}\int\limits_{\Gamma_{s}}\eta^{2}_{xxx}dx\leqslant \frac{\delta}{8}\|\eta_{txx}(t)\|^{2}_{L^{2}(\Gamma_{s})}+\frac{2}{\delta}\|G_{3}\|^{2}_{L^{2}(\Gamma_{s})}.
				\end{array}
				\end{equation}
				Now integrating \eqref{2.3.10} with respect to $t,$
				\begin{equation}\label{2.3.11}
				\begin{split}
				\|\eta_{tx}\|^{2}_{L^{\infty}(0,T;L^{2}(\Gamma_{s}))}  +\|\eta_{xx}\|^{2}_{L^{\infty}(0,T;L^{2}(\Gamma_{s}))}&  +\|\eta_{txx}\|^{2}_{L^{2}(0,T;L^{2}(\Gamma_{s}))}+\|\eta_{xxx}\|^{2}_{L^{\infty}(0,T;L^{2}(\Gamma_{s}))}\\[1.mm]
				& \leqslant c\big(\|\eta_{tx}(0)\|^{2}_{L^{2}(\Gamma_{s})}+\|G_{3}\|^{2}_{L^{2}(0,T;L^{2}(\Gamma_{s}))} \big).
				\end{split}
				\end{equation}
				From \eqref{2.3.11} we get in particular
				\begin{equation}\label{2.3.12}
				\begin{array}{l}
				\|\eta\|^{2}_{L^{\infty}(0,T;H^{3}(\Gamma_{s}))}\leqslant c\big(\|\eta_{1}\|^{2}_{ H^{1}(\Gamma_{s})}+\|G_{3}\|^{2}_{L^{2}(0,T;L^{2}(\Gamma_{s}))}\big).
				\end{array}
				\end{equation}
				Now consider the equations \eqref{zeta}. One imitates the analysis used to obtain \eqref{2.3.11} to find
				\begin{equation}\label{2.3.13}
				\begin{split}
				\|\eta_{ttx}\|^{2}_{L^{\infty}(0,T;L^{2}(\Gamma_{s}))}  +\|\eta_{txx}\|^{2}&_{L^{\infty}(0,T;L^{2}(\Gamma_{s}))}  +\|\eta_{ttxx}\|^{2}_{L^{2}(0,T;L^{2}(\Gamma_{s}))}+\|\eta_{txxx}\|^{2}_{L^{\infty}(0,T;L^{2}(\Gamma_{s}))}\\[1.mm]
				& \leqslant c\big(\|\eta_{1}\|^{2}_{H^{3}(\Gamma_{s})}+\|G_{3}\mid_{t=0}\|^{2}_{ H^{1}(\Gamma_{s})}+\|{G}_{3,t}\|^{2}_{L^{2}(0,T;L^{2}(\Gamma_{s}))} \big).
				\end{split}
				\end{equation}
				Hence in particular
				\begin{equation}\label{2.3.14}
				\begin{split}
				\|\eta_{t}\|^{2}_{L^{\infty}(0,T;H^{3}(\Gamma_{s}))}+ \|\eta_{tt}\|^{2}_{L^{\infty}(0,T; H^{1}(\Gamma_{s}))}\leqslant c\big(\|\eta_{1}\|^{2}_{H^{3}(\Gamma_{s})}&+\|G_{3}\mid_{t=0}\|^{2}_{ H^{1}(\Gamma_{s})}\\
				&+\|{G}_{3,t}\|^{2}_{L^{2}(0,T;L^{2}(\Gamma_{s}))}\big).
				\end{split}
				\end{equation}
				Now we will use that 
				\begin{equation}\label{g3linf}
				\begin{array}{l}
				G_{3}\in L^{\infty}(0,T;H^{1/2}(\Gamma_{s})).
				\end{array}
				\end{equation}
				Write \eqref{2.3.1}$_{1}$ as
				\begin{equation}\label{2.3.15}
				\begin{array}{l}
				\eta_{xxxx}=\frac{1}{\alpha}\big(G_{3}+\delta\eta_{txx}+\beta\eta_{xx}-\eta_{tt} \big).
				\end{array}
				\end{equation}
				In view of \eqref{g3linf} one observes that all the terms appearing in the right hand side of \eqref{2.3.15} belongs to $L^{\infty}(0,T;H^{1/2}(\Gamma_{s})).$
				As the beam in our problem is one dimensional, $\eta\in L^{\infty}(0,T;H^{9/2}(\Gamma_{s}))$ and the estimates \eqref{2.3.12} and \eqref{2.3.14} furnish the following 
				\begin{equation}\label{2.3.16}
				\begin{split}
				\|\eta\|_{L^{\infty}(0,T;H^{9/2}(\Gamma_{s}))}\leqslant  c\big(\|\eta_{1}\|_{H^{3}(\Gamma_{s})}+\|G_{3}\mid_{t=0}\|_{ H^{1}(\Gamma_{s})}
				&+\|{G}_{3,t}\|_{L^{2}(0,T;L^{2}(\Gamma_{s}))}\\
				&+\|G_{3}\|_{L^{\infty}(0,T;H^{1/2}(\Gamma_{s}))}\big).
				\end{split}
				\end{equation}
				Hence combining all the above estimates we here conclude the proof of Theorem \ref{t23}.
			\end{proof}
			The following corollary follows directly by using the regularities \eqref{roe} and the expression \eqref{2.3.15} of $\eta_{xxxx}.$
			\begin{corollary}\label{timebeam}
				In addition to the assumptions of Theorem \ref{t23} if $G_{3}$ further satisfies the regularity assumption $G_{3}\in C^{0}([0,T];H^{1/2}(\Gamma_{s}))$ then $\eta\in C^{0}([0,T];H^{9/2}(\Gamma_{s})).$
			\end{corollary}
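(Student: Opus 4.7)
The plan is to read off continuity in time of $\eta$ with values in $H^{9/2}(\Gamma_s)$ directly from the PDE \eqref{2.3.15}, now that we already have $\eta\in L^{\infty}(0,T;H^{9/2}(\Gamma_s))$ from Theorem \ref{t23}. The key observation is that on the $1$D torus $\Gamma_s$ the $H^{9/2}$ norm is controlled by the $H^3$ norm plus the $H^{1/2}$ norm of $\eta_{xxxx}$, so the only thing to check is that the right-hand side of \eqref{2.3.15} is continuous in $t$ with values in $H^{1/2}(\Gamma_s)$.

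First, I would collect the continuity information already provided by Theorem \ref{t23}. From $\eta_t\in C^{0}([0,T];H^{3}(\Gamma_s))$ and $\eta(\cdot,0)=0$, integration in time gives $\eta\in C^{0}([0,T];H^{3}(\Gamma_s))$; hence $\eta_{xx}\in C^{0}([0,T];H^{1}(\Gamma_s))\hookrightarrow C^{0}([0,T];H^{1/2}(\Gamma_s))$. Similarly, $\eta_{t}\in C^{0}([0,T];H^{3}(\Gamma_s))$ yields $\eta_{txx}\in C^{0}([0,T];H^{1}(\Gamma_s))$, and $\eta_{tt}\in C^{0}([0,T];H^{1}(\Gamma_s))$ is already part of \eqref{roe}. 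By the additional hypothesis $G_3\in C^{0}([0,T];H^{1/2}(\Gamma_s))$, I therefore conclude that each of the four terms on the right-hand side of
\begin{equation*}
\eta_{xxxx}=\frac{1}{\alpha}\bigl(G_{3}+\delta\eta_{txx}+\beta\eta_{xx}-\eta_{tt}\bigr)
\end{equation*}
belongs to $C^{0}([0,T];H^{1/2}(\Gamma_s))$, so $\eta_{xxxx}\in C^{0}([0,T];H^{1/2}(\Gamma_s))$.

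Finally, I would combine $\eta\in C^{0}([0,T];H^{3}(\Gamma_s))$ with $\eta_{xxxx}\in C^{0}([0,T];H^{1/2}(\Gamma_s))$. Because $\Gamma_s$ is one-dimensional and periodic, these two pieces of information together give $\eta\in C^{0}([0,T];H^{9/2}(\Gamma_s))$ (by the equivalence of norms $\|\eta\|_{H^{9/2}}\sim\|\eta\|_{H^{3}}+\|\eta_{xxxx}\|_{H^{1/2}}$ via e.g.\ Fourier series on $\mathbb{T}_L$). There is no real obstacle here: the only point requiring mild care is that we must rely on the continuity of $\eta_t$ and $\eta_{tt}$ with values in $H^3$ and $H^1$ respectively (rather than on the weaker $L^2$-in-time estimates), but these continuity statements are precisely the content of \eqref{roe}, so the corollary follows at once.
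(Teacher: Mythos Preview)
Your proof is correct and follows exactly the approach indicated in the paper: the corollary is stated to follow directly from the regularities \eqref{roe} together with the expression \eqref{2.3.15} for $\eta_{xxxx}$, and you have simply made this explicit by checking that each term on the right-hand side of \eqref{2.3.15} lies in $C^{0}([0,T];H^{1/2}(\Gamma_s))$ and then combining with $\eta\in C^{0}([0,T];H^{3}(\Gamma_s))$.
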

			\section{Local existence of the non linear coupled system}\label{fixdpt}
			From now on up to the end of this article, we fix the initial data $(\rho_{0},{\bf u}_{0},\eta_{1})$ such that they satisfy the assumptions stated in \eqref{cit}.  We also fix the constant 
			\begin{equation}\label{fixd0}
			\begin{array}{l}
			\delta_{0}\in (0,1).
			\end{array}
			\end{equation} 
			The constant $\delta_{0}$ will be used to keep a positive distance between the beam and the bottom $\Gamma_{\ell}$ of the domain $\Omega.$ Also recall that the positive constants $m$ and $M$ were fixed in \eqref{cor0} and $\overline{T}$ was fixed in \eqref{Tbar}.
				\begin{proof}[Proof of Theorem \ref{main}]
				This section is devoted to the study of the non linear system \eqref{chdb}. We will prove here that the system \eqref{chdb} admits a strong solution in a time interval $(0,T),$ for some $T>0$ small enough and hence we will conclude Theorem \ref{main}.\\
			    Now we sketch the steps towards the proof of Theorem \ref{main}:\\
				(i) First in Section \ref{dfp} we define a suitable map for which a fixed point gives a solution of the system \eqref{chdb}.\\
				(ii) Next we design a suitable convex set such that the map defined in step (i) maps this set into itself. This is done in Section \ref{Enht}. \\
				(iii) In Section \ref{comcon} we show that the convex set defined in step (ii) is compact in some appropriate topology. We further prove that the fixed point map from step (i), is continuous in that topology.\\
				(iv)  At the end in Section \ref{conc} we draw the final conclusion to prove Theorem \ref{main}.\\
				In what follows all the constants appearing in the inequalities may vary from line to line but will never depend on $T.$
				 \subsection{Definition of the fixed point map}\label{dfp}
				 For $(\widetilde{\sigma},\widetilde{\bf w},\widetilde{\eta})$ satisfying
				 \begin{equation}\label{regswe}
				 \left\{ \begin{array}{l}
				 \widetilde{\sigma}\in L^{\infty}(0,T;H^{2}(\Omega))\cap W^{1,\infty}(0,T;H^{1}(\Omega)),\\
				 \widetilde{\bf w}\in L^{\infty}(0,T;{\bf H}^{5/2}(\Omega))\cap L^{2}(0,T;{\bf H}^{3}(\Omega))\cap W^{1,\infty}(0,T;{\bf H}^{1}(\Omega))\cap H^{1}(0,T;{\bf H}^{2}(\Omega))\\
				 \qquad\cap H^{2}(0,T;{\bf L}^{2}(\Omega)),\\
				 \widetilde{\eta}\in L^{\infty}(0,T;H^{9/2}(\Gamma_{s}))\cap W^{1,\infty}(0,T;H^{3}(\Gamma_{s}))\cap H^{1}(0,T;H^{4}(\Gamma_{s}))\cap W^{2,\infty}(0,T;H^{1}(\Gamma_{s}))\\
				 \qquad \cap H^{2}(0,T;H^{2}(\Gamma_{s}))\cap H^{3}(0,T;L^{2}(\Gamma_{s})),
				 \end{array}\right.
				 \end{equation}
		we consider the following problem:
				 \begin{equation}\label{3.2}
				 \left\{
				 \begin{array}{lll}
				 &{\sigma_{t}}+\widetilde{W}(\widetilde{\bf w},\widetilde{\eta})
				 \cdot \nabla\sigma=G_{1}(\widetilde{\sigma},\widetilde{\bf w},\widetilde{\eta})&\quad \mbox{in}\quad Q_{T},
				 \vspace{1.mm}\\
				 &(\widetilde\sigma+\overline{\rho}){\bf w}_{t}-\mu \Delta {\bf w} -(\mu+\mu{'})\nabla\mathrm{div}{\bf w}=G_{2}(\widetilde{\sigma},\widetilde{\bf w},\widetilde{\eta})&\quad \mbox{in} \quad {Q}_{T},
				 \vspace{1.mm}\\
				 &{\bf w}=0&\quad \mbox{on}\quad \Sigma_{T},
				 \vspace{1.mm}\\
				 &{\bf w}(\cdot,0)={\bf w}_{0}={\bf u}_{0}-z\eta_{1}\vec{e}_{2}&\quad \mbox{in} \quad \Omega,
				 \vspace{1.mm}\\
				 &\sigma(\cdot,0)=\sigma_{0}={\rho_{0}}-{\overline\rho}&\quad\mbox{in}\quad \Omega,
				 \vspace{1.mm}\\
				 &\eta_{tt}-\beta \eta_{xx}-  \delta\eta_{txx}+\alpha\eta_{xxxx}=G_{3}(\widetilde{\sigma},\widetilde{\bf w},\widetilde{\eta}) &\quad\mbox{on}\quad \Sigma^{s}_{T},
				 \vspace{1.mm}\\
				 &\eta(0)=0\quad \mbox{and}\quad \eta_{t}(0)=\eta_{1}&\quad\mbox{in}\quad \Gamma_{s},
				 \end{array} \right.
				 \end{equation}
				 where $G_{1},$ $G_{2},$ $G_{3}$ are as defined in \eqref{1.22} and $\widetilde{W}(\widetilde{\bf w},\widetilde{\eta})$ is defined as follows
				 \begin{equation}\label{dotW}
				 \begin{array}{l}
				 \displaystyle
				 \widetilde{W}(\widetilde{\bf w},\widetilde{\eta})=\begin{bmatrix}
				 \widetilde{w}_{1}\\
				 \frac{1}{(1+\widetilde{\eta})}(\widetilde{w}_{2}-\widetilde{w}_{1}z\widetilde{\eta}_{x})
				 \end{bmatrix},\qquad \left(\widetilde{\bf w}=\begin{pmatrix}
				 \widetilde{w}_{1}\\
				 \widetilde{w}_{2}
				 \end{pmatrix}\right).
				 \end{array}
				 \end{equation}
				 
				 It turns out that it will be important for us to check that $G_{2}(\widetilde{\sigma},\widetilde{\bf w},\widetilde{\eta})$ and $ G_{3}(\widetilde{\sigma},\widetilde{\bf w},\widetilde{\eta})$ respectively coincide at time $t = 0$ with the values $G_2^0$ and $G_3^0$ computed in \eqref{iG0} and \eqref{G30}, and given as follows:
				 \begin{equation}\label{iG0-bis}
\begin{split}
G_{2}^0&=-P'(\rho_{0})\nabla\rho_{0}-(\delta\eta_{1,xx}-(\mu+2\mu')({ u}_{0})_{2,z}
+P(\rho_{0}))z\rho_{0}\vec{e}_{2}
+z\rho_{0}({\bf u}_{0})_{z}\eta_{1}\\
&-\rho_{0}({\bf u}_{0}\cdot\nabla){\bf u}_{0}-(-\mu\Delta-(\mu+\mu{'})\nabla\mathrm{div})(z\eta_{1}\vec{e_{2}}),
\end{split}
\end{equation}
\begin{equation}\label{G30-bis}
\begin{array}{l}
G_{3}^0=-(\mu+2\mu')({u}_{0})_{2,z}+P(\rho_{0}).
\end{array}
\end{equation} 
				 This will be imposed by assuming $(\widetilde{\sigma},\widetilde{\bf w},\widetilde{\eta},\widetilde{\eta}_{t})(\cdot,0)=(\sigma_{0},{\bf w}_{0},0,\eta_{1})$ and 
				 \begin{equation}\label{tetatt0}
				 \begin{array}{l}
				\displaystyle \widetilde{\eta}_{tt}(\cdot,0)=\delta\eta_{1,xx}-(\mu+2\mu')(u_{0})_{2,z}+P(\rho_{0})\,\,\mbox{in}\,\,\Omega, 
				 \\
	 		 	\displaystyle \widetilde{\bf w}_{t}(\cdot,0)=\frac{1}{\rho_{0}}\big(G_{2}^0-(-\mu \Delta -(\mu+\mu{'})\nabla\mathrm{div})({\bf u}_{0}-z\eta_{1}\vec{e_{2}})\big)\,\,\mbox{in}\,\,\Omega,
				 \end{array}
				 \end{equation}
				Indeed, under the above conditions, one can check from the expressions of $G_{2}(\widetilde{\sigma},\widetilde{\bf w},\widetilde{\eta})$ and $G_{3}(\widetilde{\sigma},\widetilde{\bf w},\widetilde{\eta})$ that  $G_{2}(\widetilde{\sigma},\widetilde{\bf w},\widetilde{\eta})\mid_{t=0} = G_2^0$ and $G_{3}(\widetilde{\sigma},\widetilde{\bf w},\widetilde{\eta})\mid_{t=0} = G_3^0$.
				 \begin{lem}\label{wellpos}
					Let the constant $\delta_{0}$ be fixed by \eqref{fixd0}.
				 	For $T < \overline{T}$, let us assume the following 
				 	\begin{equation}\label{tsweY}
				 	\begin{array}{l}
				 	(\widetilde{\sigma},\widetilde{\bf w},\widetilde{\eta})\,\,\mbox{satisfies}\,\,\eqref{regswe},
				 	\end{array}
				 	\end{equation}
				 	\begin{equation}\label{tweq0*}
				 	\begin{array}{l}
				 	\widetilde{\bf w}=0\,\,\mathrm{on}\,\,\Sigma_{T},
				 	\end{array}
				 	\end{equation}
				 	\begin{equation}\label{tswe0*}
				 	\begin{array}{l}
				 	(\widetilde{\sigma}(\cdot,0),\widetilde{\bf w}(\cdot,0),\widetilde{\eta}(\cdot,0),\widetilde{\eta}_{t}(\cdot,0))=(\rho_{0}-\overline{\rho},{\bf u}_{0}-z\eta_{1}\vec{e}_{2},0,\eta_{1})\,\,\mbox{in}\,\,\Omega,
				 	\end{array}
				 	\end{equation}
				 	\begin{equation}\label{etatt0*}
				 	\begin{array}{l}
				 	\eqref{tetatt0}\,\,\mbox{holds},
				 	\end{array}
				 	\end{equation}
				 	\begin{equation}\label{1etad0*}
				 	\begin{array}{l}
				 	1+\widetilde{\eta}(x,t)\geqslant\delta_{0}>0\,\,\mathrm{on}\,\,\Sigma^{s}_{T},
				 	\end{array}
				 	\end{equation}
				 	\begin{equation}\label{tsgmM*}
				 	\begin{array}{l}
				 	\displaystyle 0<\frac{m}{2}\leqslant\widetilde{\sigma}+\overline{\rho}\leqslant 2M\,\,\mathrm{in}\,\, Q_{T},
				 	\end{array}
				 	\end{equation}
				 	where $m$ and $M$ were fixed in \eqref{cor0}.\\
				 Then $G_{1}(\widetilde{\sigma},\widetilde{\bf w},\widetilde{\eta}),$ $G_{2}(\widetilde{\sigma},\widetilde{\bf w},\widetilde{\eta})$ and $G_{3}(\widetilde{\sigma},\widetilde{\bf w},\widetilde{\eta})$ satisfy the following
				 \begin{equation}\label{G123}
				 \begin{array}{ll}
				 & G_{1}(\widetilde{\sigma},\widetilde{\bf w},\widetilde{\eta})\in L^{1}(0,T;H^{2}(\Omega))\cap L^{\infty}(0,T;H^{1}(\Omega)),\\
				 & G_{2}(\widetilde{\sigma},\widetilde{\bf w},\widetilde{\eta})\in L^{2}(0,T;{\bf H}^{1}(\Omega))\cap H^{1}(0,T;{\bf L}^{2}(\Omega))\cap L^{\infty}(0,T;{\bf L}^{2}(\Omega)),\\
				 & G_{3}(\widetilde{\sigma},\widetilde{\bf w},\widetilde{\eta})\in L^{\infty}(0,T;H^{1/2}(\Gamma_{s}))\cap H^{1}(0,T;L^{2}(\Gamma_{s})),\\
				 & \widetilde{W}(\widetilde{\bf w},\widetilde{\eta})\in L^{1}(0,T;{\bf H}^{3}(\Omega))\cap L^{\infty}(0,T;{\bf H}^{2}(\Omega)), \\
				& G_{2}(\widetilde{\sigma},\widetilde{\bf w},\widetilde{\eta})\mid_{t=0} = G_2^0 \text{ and } G_{3}(\widetilde{\sigma},\widetilde{\bf w},\widetilde{\eta})\mid_{t=0} = G_3^0.
				 \end{array}
				 \end{equation}
				 \end{lem}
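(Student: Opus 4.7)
The proof is essentially a bookkeeping exercise: each of $G_1, G_2, G_3$ and $\widetilde{W}$ is a sum of products whose factors are $\widetilde{\sigma}, \widetilde{\bf w}, \widetilde{\eta}$, their spatial and temporal derivatives, the smooth function $P'(\widetilde{\sigma}+\overline{\rho})$, and the quotient $1/(1+\widetilde{\eta})$, together with the fixed weight $z$. My plan is to decompose each $G_i$ and $\widetilde{W}$ into its summands (as written in \eqref{F123}, \eqref{1.22}, \eqref{dotW}) and to estimate each one using (i) the algebra property of $H^2(\Omega)$ in dimension 2 and of $H^s(\Gamma_s)$ for $s>1/2$, (ii) the standard Sobolev embedding $H^2(\Omega)\hookrightarrow C^0(\overline{\Omega})$ and trace theorems $H^{5/2}(\Omega)\to H^2(\Gamma_s)$, $H^3(\Omega)\to H^{5/2}(\Gamma_s)$, and (iii) assumption \eqref{1etad0*} together with a composition argument to control $1/(1+\widetilde{\eta})$ in $L^\infty(0,T;H^{9/2}(\Gamma_s))\cap W^{1,\infty}(0,T;H^3(\Gamma_s))\cap \dots$, with the same temporal regularity as $\widetilde{\eta}$ itself.

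First I would treat $\widetilde{W}$: its numerators $\widetilde{w}_1$ and $\widetilde{w}_2-\widetilde{w}_1 z \widetilde{\eta}_x$ lie in $L^2(0,T;{\bf H}^3(\Omega))\cap L^\infty(0,T;{\bf H}^2(\Omega))$ thanks to \eqref{regswe}, and multiplication by $1/(1+\widetilde{\eta})$ preserves these spaces. Next, for $G_1$, each summand in $-(\widetilde{\sigma}+\overline{\rho})\operatorname{div}(\widetilde{\bf w}+z\widetilde{\eta}_t\vec{e}_2)+F_1(\widetilde{\sigma}+\overline{\rho},\widetilde{\bf w}+z\widetilde{\eta}_t\vec{e}_2,\widetilde{\eta})$ is a product of $\widetilde{\sigma}\in L^\infty(0,T;H^2(\Omega))$ with a first-order derivative of $\widetilde{\bf w}$ or $\widetilde{\eta}_t$. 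The algebra property of $H^2(\Omega)$ combined with $\widetilde{\bf w}\in L^2(0,T;{\bf H}^3(\Omega))$ and $\widetilde{\eta}_t\in L^2(0,T;H^4(\Gamma_s))$ gives the $L^1(0,T;H^2(\Omega))$ bound; the $L^\infty(0,T;H^1(\Omega))$ bound follows from $\widetilde{\bf w}\in W^{1,\infty}(0,T;{\bf H}^1)$ and $\widetilde{\eta}_t\in L^\infty(0,T;H^3(\Gamma_s))$.

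For $G_2$, the key observation is that the highest time derivative appearing is $\widetilde{\eta}_{tt}\in L^\infty(0,T;H^1(\Gamma_s))\cap L^2(0,T;H^2(\Gamma_s))$, and once multiplied by $z(\widetilde{\sigma}+\overline{\rho})$ it fits into $L^\infty(0,T;{\bf L}^2(\Omega))\cap L^2(0,T;{\bf H}^1(\Omega))$. All other summands in $F_2$ are products of $\widetilde{\sigma}$ and of at most second-order derivatives of $\widetilde{\bf w}$ together with $\widetilde{\eta}$ and its derivatives up to order two (and $\widetilde{\eta}_x, \widetilde{\eta}_t$ appearing at most quadratically), and are estimated in the same way. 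For the time-differentiated bound $G_{2,t}\in L^2(0,T;{\bf L}^2(\Omega))$, I would apply $\partial_t$ to each summand and verify that the resulting factors containing the top-order time derivatives $\widetilde{\eta}_{ttt}$, $\widetilde{\bf w}_{tt}$, $\widetilde{\sigma}_t$ (which are $L^2$ in time by \eqref{regswe}) are always paired with a factor that is $L^\infty$ in time with values in an appropriate Sobolev space, so that Hölder's inequality closes. The analogous argument, carried out on the one-dimensional boundary $\Gamma_s$ using the trace of $\widetilde{\bf w}$, $\widetilde{\bf w}_x$ and $\widetilde{\bf w}_z$ and the algebra property of $H^{1/2}(\Gamma_s)$, delivers the claimed regularity of $G_3$ and $G_{3,t}$.

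Finally, the compatibility identities $G_2(\widetilde{\sigma},\widetilde{\bf w},\widetilde{\eta})|_{t=0}=G_2^0$ and $G_3(\widetilde{\sigma},\widetilde{\bf w},\widetilde{\eta})|_{t=0}=G_3^0$ are checked by direct substitution: by \eqref{tswe0*} the initial value $\widetilde{\eta}(\cdot,0)=0$ annihilates every summand in $F_2$ and $F_3$ that carries a factor of $\widetilde{\eta}$; the value $\widetilde{\eta}_t(\cdot,0)=\eta_1$ combined with \eqref{etatt0*} produces exactly the surviving terms in \eqref{iG0-bis} and \eqref{G30-bis}. The main obstacle is the bookkeeping needed for $G_{2,t}\in L^2(0,T;{\bf L}^2(\Omega))$: one has to keep track of which factor absorbs the highest-order time derivative in every product, so that the remaining factor carries enough $L^\infty$-in-time regularity to close the estimate; the available time regularity of $\widetilde{\eta}$ up to three derivatives and of $\widetilde{\bf w}$ up to two derivatives, as recorded in \eqref{regswe}, is precisely what makes this possible.
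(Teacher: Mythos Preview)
Your approach is correct and essentially matches the paper's: the paper's proof simply defers the detailed term-by-term computations to Lemmas~\ref{eog1}--\ref{eoWs} (together with the auxiliary product Lemma~\ref{lfpss}, the trace Lemma~\ref{leoin}, and the fundamental-theorem-of-calculus Lemma~\ref{futcl}), which carry out exactly the Sobolev product estimates you describe. One small correction worth flagging: $H^{1/2}(\Gamma_s)$ is \emph{not} an algebra in dimension one (the threshold is $s>d/2=1/2$), so you cannot literally invoke ``the algebra property of $H^{1/2}(\Gamma_s)$'' for $G_3$; the paper instead uses the asymmetric product rule of Lemma~\ref{lfpss}, always placing the $\widetilde{\eta}$-factors in a higher space such as $H^{2}(\Gamma_s)$ or $H^{7/2}(\Gamma_s)$ and only the trace of $\widetilde{\bf w}_z$ in $H^{1/2}(\Gamma_s)$.
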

				 \begin{proof}
				 	The detailed computations to verify \eqref{G123} follows from Lemma \ref{eog1} (for estimates of $G_{1}$), Lemma \ref{eog2} (for estimates of $G_{2}$), Lemma \ref{eog3} (for estimates of $G_{3}$) and Lemma \ref{eoWs} (for estimates of $\widetilde{W}$) in the Section \ref{estimates}.
				 \end{proof}
				 Observe that the condition \eqref{tweq0*} implies that $\widetilde{W}(\widetilde{\bf w},\widetilde{\eta})\cdot {\bf n}=0$ (where $\widetilde{W}$ is as defined in \eqref{dotW}) on $\Sigma_{T}.$ Hence in view of Lemma \ref{wellpos}, for all $(\widetilde{\sigma},\widetilde{\bf w},\widetilde{\eta})$ satisfying the conditions \eqref{tsweY}-\eqref{tweq0*}-\eqref{tswe0*}-\eqref{etatt0*}-\eqref{1etad0*}
				 -\eqref{tsgmM*}, the system \eqref{3.2} admits a unique solution as a consequence of Theorem \ref{t21}, Theorem \ref{t22} and Theorem \ref{t23} in the space $Z^{T}_{1}\times Y^{T}_{2}\times Z^{T}_{3},$ where $Y^{T}_{2}$ is defined in \eqref{dofYi}, $Z^{T}_{1}$ and $Z^{T}_{3}$ are defined as follows
				 \begin{equation}\label{Z23}
				 \begin{array}{ll}
				 & Z^{T}_{1}=\{{\rho}\in C^{0}([0,T];H^{2}(\Omega))\suchthat \rho_{t}\in L^{\infty}(0,T; H^{1}(\Omega))\},\\
				 & Z^{T}_{3}=\{\eta\in L^{\infty}(0,T; H^{9/2}(\Gamma_{s})),\,\eta(x,0)=0\suchthat \eta_{t}\in L^{2}(0,T;{ H}^{4}(\Gamma_{s}))\cap C^{0}([0,T];{ H}^{3}(\Gamma_{s})), \\[1.mm]
				 &\qquad\qquad\qquad\qquad\qquad\eta_{tt}\in L^{2}(0,T;{ H}^{2}(\Gamma_{s}))\cap C^{0}([0,T];{ H}^{1}(\Gamma_{s})),
				 \eta_{ttt}\in L^{2}(0,T;{ L}^{2}(\Gamma_{s}))\}.
				 \end{array}
				 \end{equation}
				 Observe that the only difference between $Y^{T}_{1}$ (defined in \eqref{dofYi}) and $Z^{T}_{1}$ is that the elements of $Y^{T}_{1}$ belongs to $C^{1}([0,T];H^{1}(\Omega))$ while the elements of $Z^{T}_{1}$ are in $W^{1,\infty}(0,T;H^{1}(\Omega)).$ Also one observes that the elements of $Y^{T}_{3}$ (defined in \eqref{dofYi}) are in $C^{0}([0,T];H^{9/2}(\Gamma_{s}))$ while $Z^{T}_{3}$ is only a subset of $L^{\infty}(0,T;H^{9/2}(\Gamma_{s})).$\\
				  Before defining a suitable fixed point map (in order to solve the non-linear problem \eqref{chdb}), we will introduce a convex set $\mathscr{C}_{T}$ (where we will show the existence of a fixed point). The set $\mathscr{C}_{T}$ will be defined as a subset of $L^{2}(0,T;L^{2}(\Omega))\times L^{2}(0,T;{\bf L}^{2}(\Omega))\times L^{2}(0,T;L^{2}(\Gamma_{s}))$ such that the elements of $\mathscr{C}_{T}$ satisfy some norm bounds and some conditions at initial time $t=0.$\\
				 Let us make precise the assumptions which will be used to define the set $\mathscr{C}_{T}.$\\ 
				 $Regularity\,\,assumptions\,\,and\,\,norm\,\,bounds\,\,of\,\, (\widetilde{\sigma},\widetilde{\bf w},\widetilde{\eta})$:
				\begin{subequations}\label{normbnd}
				 	\begin{equation}\label{tsB12}
				 	\begin{array}{l}
				 	\|\widetilde{\sigma}\|_{L^{\infty}(0,T;H^{2}(\Omega))}\leqslant B_{1},\quad\|\widetilde\sigma_{t}\|_{L^{\infty}(0,T; H^{1}(\Omega))}\leqslant B_{2},
				 	\end{array}
				 	\end{equation}
				 	\begin{equation}\label{twB3}
				 	\begin{split}
				 	\|\widetilde{\bf w}\|_{L^{\infty}(0,T;{\bf H}^{2}(\Omega))}+\|\widetilde{\bf w}\|_{{L^{2}}(0,T;{\bf H}^{3}(\Omega))}
				 	+\|\widetilde{\bf w}_{t}\|_{L^{\infty}(0,T;{\bf H}^{1}(\Omega))}
				 	&+\|\widetilde{\bf w}_{t}\|_{L^{2}(0,T;{\bf H}^{2}(\Omega))}\\[1.mm]
				 	&+\|\widetilde{\bf w}_{tt}\|_{L^{2}(0,T;{\bf L}^{2}(\Omega))}\leqslant B_{3},
				 	\end{split}
				 	\end{equation}
				 	\begin{equation}\label{teB4}
				 	\begin{split}
				 	\|\widetilde\eta\|_{L^{\infty}(0,T;H^{9/2}(\Gamma_{s}))}+	\|\widetilde\eta_{t}\|_{L^{\infty}(0,T;H^{3}(\Gamma_{s}))}&+	\|\widetilde\eta_{t}\|_{L^{2}(0,T;H^{4}(\Gamma_{s}))}+	\|\widetilde\eta_{tt}\|_{L^{\infty}(0,T; H^{1}(\Gamma_{s}))}\\[1.mm]
				 	&+	\|\widetilde\eta_{tt}\|_{L^{2}(0,T;H^{2}(\Gamma_{s}))}
				 	+\|\widetilde\eta_{ttt}\|_{L^{2}(0,T;L^{2}(\Gamma_{s}))}\leqslant B_{4},
				 	\end{split}
				 	\end{equation} 
				 	\vspace{1.mm}\\
				 	\begin{equation}\label{1etad0}
				 	\begin{split}
				 	1+\widetilde{\eta}(x,t)\geqslant\delta_{0}>0\,\,\mathrm{on}\,\,\Sigma^{s}_{T},
				 	\end{split}
				 	\end{equation}
				 	\begin{equation}\label{tsgmM}
				 	\begin{split}
				 	0<\frac{m}{2}\leqslant\widetilde{\sigma}+\overline{\rho}\leqslant 2M\,\,\mathrm{in}\,\, Q_{T},
				 	\end{split}
				 	\end{equation}
				 \end{subequations} 
				 where $B_{i}$'s ($1\leqslant i\leqslant 4$) are positive constants and will be chosen in the sequel. The norm bound \eqref{twB3} implicitly asserts (by interpolation) that $\widetilde{\bf w}$ is in $C^{0}([0,T];{\bf H}^{5/2}(\Omega)).$\\
				 $Assumptions\,\,on\,\,initial\,\,and\,\,boundary\,\,conditions$:\\
				 \begin{subequations}\label{inbndc}
				 	\begin{equation}\label{tweq0}
				 	\begin{split}
				 	\widetilde{\bf w}=0\,\,\mbox{on}\,\,\Sigma_{T},
				 	\end{split}
				 	\end{equation}
				 	\begin{equation}\label{tswe0}
				 	\begin{split}
				 	(\widetilde{\sigma}(\cdot,0),\widetilde{\bf w}(\cdot,0),\widetilde{\eta}(\cdot,0),\widetilde{\eta}_{t}(\cdot,0))=(\rho_{0}-\overline{\rho},{\bf u}_{0}-z\eta_{1}\vec{e}_{2},0,\eta_{1})\,\,\mbox{in}\,\,\Omega,
				 	\end{split}
				 	\end{equation}
				 	\begin{equation}\label{etatt0}
				 	\begin{split}
				 	\widetilde{\eta}_{tt}(\cdot,0)=\delta\eta_{1,xx}-(\mu+2\mu')(u_{0})_{2,z}+P(\rho_{0})\,\,\mbox{in}\,\,\Omega,
				 	\end{split}
				 	\end{equation}
				 	\begin{equation}\label{twt0}
				 	\begin{split}
				 	\widetilde{\bf w}_{t}(\cdot,0)=\frac{1}{\rho_{0}}\big(G_{2}^0-(-\mu \Delta -(\mu+\mu{'})\nabla\mathrm{div})({\bf u}_{0}-z\eta_{1}\vec{e_{2}})\big)\,\,\mbox{in}\,\,\Omega.
				 	\end{split}
				 	\end{equation}
				 \end{subequations}
				 For $T<\overline{T},$ let us define the following set
				 \begin{equation}\label{defRT}
				 \begin{split}
				 \mathscr{C}_{T}(B_{1},B_{2},B_{3},B_{4})= \{(\widetilde{\sigma},\widetilde{\bf w},\widetilde{\eta})&\in L^{2}(0,T;L^{2}(\Omega))\times L^{2}(0,T;{\bf L}^{2}(\Omega))\times L^{2}(0,T;L^{2}(\Gamma_{s}))\suchthat\\
				 & \mbox{the relations}\, \eqref{normbnd}-\eqref{inbndc}\,\,\mbox{are true}\}.
				 \end{split}
				 \end{equation} 
				 Now for $(\widetilde{\sigma},\widetilde{\bf w},\widetilde{\eta})\in\mathscr{C}_{T}(B_{1},B_{2},B_{3},B_{4}),$ let $(\sigma,{\bf w},\eta)\in Z_{1}^{T}\times Y_{2}^{T}\times Z_{3}^{T}$ (recall the definition of $Y^{T}_{2},$ from \eqref{dofYi} and $Z^{T}_{1},$ $Z^{T}_{3}$ are defined in \eqref{Z23}) be the solution of the problem \eqref{3.2} corresponding to $(\widetilde{\sigma},\widetilde{\bf w},\widetilde{\eta}).$ This defines the map
				 \begin{equation}\label{welposL}
				 \begin{matrix}
				 L: \mathscr{C}_{T}(B_{1},B_{2},B_{3},B_{4})&\longrightarrow& Z_{1}^{T}\times Y_{2}^{T}\times Z_{3}^{T}\\
				  (\widetilde{\sigma},\widetilde{\bf w},\widetilde{\eta})&\mapsto& ({\sigma},{\bf w},{\eta}).
				  \end{matrix}
				 \end{equation}
				 Now observe that if the map $L$ admits a fixed point $(\sigma_{f},{\bf w}_{f},\eta_{f})$ on the set $\mathscr{C}_{T}(B_{1},B_{2},B_{3},B_{4}),$ then the triplet $(\sigma_{f},{\bf w}_{f},\eta_{f})$ is a solution to the system \eqref{chdb}. 
				 Thus, our goal from now is to prove the existence of a fixed point to the map $L.$ In that direction we first show that for suitable parameters $B_{i}$ ($1\leqslant i\leqslant 4$) and $T,$ the set $\mathscr{C}_{T}(B_{1},B_{2},B_{3},B_{4})$ is non-empty.
				 \begin{lem}\label{nonempty}
				 	There exists a constant $B_{0}^{*}>0$ such that for all $B_{i}\geqslant B^{*}_{0}$ ($1\leqslant i \leqslant 4$) there exists $T^{*}_{0}(B_{1},B_{2},B_{3},B_{4} )\in (0, \min\{1,\overline{T}\})$ such that for all $0<T\leqslant T^{*}_{0}(B_{1},B_{2},B_{3},B_{4})$ the set $\mathscr{C}_{T}(B_{1},B_{2},B_{3},B_{4})$ is non empty.
				 \end{lem}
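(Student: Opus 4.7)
The plan is to exhibit a concrete triplet $(\widetilde{\sigma},\widetilde{\bf w},\widetilde{\eta})$ lying in $\mathscr{C}_T$ by taking the density constant in time and by producing the velocity and the beam displacement as solutions of the decoupled linear problems of Section~\ref{lineqs} with the frozen source terms $G_2^0$ and $G_3^0$ from \eqref{iG0-bis}--\eqref{G30-bis}. This choice will automatically force the correct values of $\widetilde{\bf w}_t(\cdot,0)$ and $\widetilde{\eta}_{tt}(\cdot,0)$ prescribed in \eqref{twt0} and \eqref{etatt0}; the required norm bounds and the pointwise inequalities will then follow from the a priori estimates of Theorems~\ref{t21} and \ref{t23}, provided $B^*_0$ is chosen large enough and $T^*_0$ small enough.

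More precisely, I would first set $\widetilde{\sigma}(x,t) := \rho_0(x) - \overline{\rho}$; then $\widetilde{\sigma}_t \equiv 0$, condition \eqref{tsgmM} holds trivially by \eqref{cor0}, and the initial datum in \eqref{tswe0} is matched. Next, I would define $\widetilde{\bf w}$ as the unique solution, furnished by Theorem~\ref{t21} applied with $\overline{\sigma}=\rho_0$ and source $G_2=G_2^0$, of the linear parabolic problem
\begin{equation*}
\rho_0\,\widetilde{\bf w}_t - \mu\Delta\widetilde{\bf w} - (\mu+\mu')\nabla\mathrm{div}\,\widetilde{\bf w} = G_2^0 \quad\text{in } Q_T,\qquad \widetilde{\bf w}=0 \text{ on } \Sigma_T,\qquad \widetilde{\bf w}(\cdot,0)={\bf w}_0.
\end{equation*}
The hypotheses of Theorem~\ref{t21} follow from \eqref{cit}$(i)(a)$, \eqref{cit}$(i)(b)_1$ (which gives ${\bf w}_0 \in {\bf H}^1_0$) and \eqref{regcom} (which gives the second condition in \eqref{condvel2}); evaluating the equation at $t=0$ then reproduces exactly the formula \eqref{twt0} for $\widetilde{\bf w}_t(\cdot,0)$. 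Similarly, I would define $\widetilde{\eta}$ as the solution produced by Theorem~\ref{t23} with source $G_3 \equiv G_3^0 \in H^{3/2}(\Gamma_s)$, of
\begin{equation*}
\widetilde{\eta}_{tt} - \beta\widetilde{\eta}_{xx} - \delta\widetilde{\eta}_{txx} + \alpha\widetilde{\eta}_{xxxx} = G_3^0 \quad\text{on } \Sigma^s_T,\qquad \widetilde{\eta}(0)=0,\quad \widetilde{\eta}_t(0)=\eta_1.
\end{equation*}
Since $\widetilde{\eta}(0)=0$ implies $\widetilde{\eta}_{xx}(0)=\widetilde{\eta}_{xxxx}(0)=0$ while $\widetilde{\eta}_{txx}(0)=\eta_{1,xx}$, evaluating the beam equation at $t=0$ reproduces exactly \eqref{etatt0}.

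It remains to verify \eqref{tsB12}--\eqref{teB4}, \eqref{tweq0} and \eqref{1etad0}. The norm of $\widetilde{\sigma}$ in $L^\infty(0,T;H^2)$ equals $\|\rho_0-\overline{\rho}\|_{H^2}$, independent of $T$; the norms of $\widetilde{\bf w}$ and $\widetilde{\eta}$ appearing in \eqref{twB3}--\eqref{teB4} are, by the $T$-independent estimates \eqref{2.1.2} and \eqref{2.3.5}, bounded by a constant $C_0$ depending only on the data $(\rho_0,{\bf u}_0,\eta_1)$ and on $\overline{T}$. Setting $B^*_0 := C_0$ thus makes \eqref{tsB12}--\eqref{teB4} hold for every $B_i\geq B^*_0$ and every $T\in(0,\overline{T})$. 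The boundary/initial compatibility conditions \eqref{tweq0}--\eqref{twt0} hold by construction. The only genuine smallness requirement is \eqref{1etad0}: since $\widetilde{\eta}(\cdot,0)=0$, the estimate \eqref{smalleta} yields $\|\widetilde{\eta}\|_{L^\infty(\Sigma^s_T)}\leq CT\,B_4$, so it suffices to choose $T^*_0(B_1,\dots,B_4)\in(0,\min\{1,\overline{T}\})$ with $CT^*_0 B_4 < 1-\delta_0$. The main (albeit minor) technical point is the order of quantifiers: one must fix $B_i\geq B^*_0$ first, and only afterwards select $T^*_0$ small enough in terms of these $B_i$'s, so that the smallness needed in \eqref{1etad0} does not conflict with the norm bounds.
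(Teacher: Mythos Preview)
Your proof is correct and follows essentially the same strategy as the paper: constant-in-time density $\widetilde{\sigma}=\rho_0-\overline{\rho}$, and $\widetilde{\bf w}$, $\widetilde{\eta}$ obtained as solutions of the linear parabolic and beam problems with source terms matching $G_2^0$, $G_3^0$ at $t=0$, so that \eqref{twt0}--\eqref{etatt0} are forced by the equations themselves. The one genuine difference is that the paper introduces time-dependent liftings ${\bf h}$ and $h_1$ of $G_2^0$ and $G_3^0$ (with $L^2$-in-time regularity on $\mathbb{R}^+$) and solves the velocity equation on $Q_\infty$, whereas you take the sources literally constant in time and solve on $Q_T$. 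Your route is more elementary---constant sources trivially have zero time derivative, so the hypotheses of Theorems~\ref{t21} and \ref{t23} are immediate, and the $T$-independence of the resulting bounds comes directly from the $T$-independence of $c_1$ and $c_4$ together with $T<\overline{T}$. The paper's lifting device buys nothing extra here; it is presumably a vestige of wanting estimates on an unbounded time interval. One small slip: to get the third condition in \eqref{condvel2} (membership in ${\bf H}^1_0$, not merely ${\bf H}^1$) you need \eqref{asm2}, which in turn uses the compatibility assumption \eqref{cit}$(i)(b)_2$, not just \eqref{regcom}.
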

				 \begin{proof}
				 	The choice of the constant $B^{*}_{0}$ will be done based on the calculations performed in the following steps.\\
				 	$Step\,1.$ In this step we will prove the existence of a function ${\bf w}^{*}$ which satisfies the norm bound \eqref{twB3} and the condition \eqref{twt0} at time $t=0.$ We begin by recalling that $(\rho_{0},{\bf u}_{0},\eta_{1})$ satisfies \eqref{cit} and hence one observes that $({\bf u}_{0}-z\eta_{1}\vec{e}_{2})\in {\bf H}^{3}(\Omega).$ As $G_{2}^0$ is given by the expression \eqref{iG0-bis}, using \eqref{regcom} one has $G_{2}^0\in H^{1}(\Omega).$ We can thus find a lifting ${\bf h}\in L^{2}(\mathbb{R}^{+};{\bf H}^{1}(\Omega))$ and ${\bf h}_{t}\in L^{2}(\mathbb{R}^{+};{\bf L}^{2}(\Omega))$ (see e.g. \cite[Theorem 3.2, p. 21]{liomag1}) such that ${\bf h}(0)=G_{2}^0$ in $\Omega$. (In fact, we only need $G_2^0 \in H^{1/2}(\Omega)$ in this step.)
	\\				
%
				 	Let ${\bf w}^{*}$ be the solution of the following system
				 	\begin{equation}\label{eqv*}
				 	\left\{ \begin{array}{ll}
				 	\rho_{0}{\bf w}^{*}_{t}-\mu\Delta{\bf w}^{*}-(\mu+\mu')\nabla\mbox{div}{\bf w}^{*}={\bf h}& \quad\mbox{in}\quad Q_{\infty},\\[1.mm]
				 	{\bf w}^{*}=0& \quad\mbox{on}\quad \Sigma_{\infty},\\[1.mm]
				 	{\bf w}^{*}(0)={\bf w}_{0}=({\bf u}_{0}-z\eta_{1}\vec{e_{2}})&\quad\mbox{in}\quad\Omega.
				 	\end{array}\right.
				 	\end{equation}
				 	In view of \eqref{asm2} one can uniquely solve \eqref{eqv*} such that the function ${\bf w}^{*}$ satisfies the following estimate
				 	\begin{equation}\label{nfv}
				 	\begin{split}
				 	&\|{\bf w}^{*}\|_{L^{\infty}(0,\infty;{\bf H}^{2}(\Omega))}+\|{\bf w}^{*}\|_{{L^{2}}(0,\infty;{\bf H}^{3}(\Omega))}
				 	+\|{\bf w}^{*}_{t}\|_{L^{\infty}(0,\infty;{\bf H}^{1}(\Omega))}\\
				 	&+\|{\bf w}^{*}_{t}\|_{L^{2}(0,\infty;{\bf H}^{2}(\Omega))}
				 	+\|{\bf w}^{*}_{tt}\|_{L^{2}(0,\infty;{\bf L}^{2}(\Omega))}\\
				 	&\leqslant c(\|{\bf h}\|_{L^{2}(0,\infty;H^{1}(\Omega))}+\|{\bf h}_{t}\|_{L^{2}(0,\infty;L^{2}(\Omega))}+\|G_{2}\mid_{t=0}\|_{H^{1}(\Omega)}+\|{\bf u}_{0}-z\eta_{1}\vec{e_{2}}\|_{H^{3}(\Omega)})\\
				 	&\leqslant c_{5}(\|G_{2}\mid_{t=0}\|_{H^{1}(\Omega)}+\|{\bf u}_{0}-z\eta_{1}\vec{e_{2}}\|_{H^{3}(\Omega)}).
				 	\end{split}
				 	\end{equation}
				 	Using \eqref{eqv*} one also observes the following
				 	\begin{equation}\label{vt*}
				 	\begin{array}{l}
				 	\displaystyle
				 	{\bf w}^{*}_{t}(\cdot,0)=\frac{1}{\rho_{0}}\big(G_{2}^0-(-\mu \Delta -(\mu+\mu{'})\nabla\mathrm{div})({\bf u}_{0}-z\eta_{1}\vec{e_{2}})\big).
				 	\end{array}
				 	\end{equation}
				 	In view of \eqref{nfv} and \eqref{vt*} one observes that ${\bf w}^{*}$ satisfies \eqref{twB3} and \eqref{twt0} respectively.\\
				 	$Step \,2.$\,  In this step we will prove the existence of a function ${\eta}^{*}$ which satisfies the norm bound \eqref{teB4}, \eqref{1etad0} and the condition \eqref{etatt0} at time $t=0.$ In that direction first recall that $G_{3}^0\in H^{1}(\Gamma_{s}).$ We use in particular the regularity $G_{3}^0\in H^{1/2}(\Gamma_{s})$ to obtain a lifting $h_1$ of $G_{3}^0$ 
					such that $h_{1}\in L^{2}(\mathbb{R}^{+};H^{1}(\Gamma_{s}))\cap H^{1}(\mathbb{R}^{+};L^{2}(\Gamma_{s}))\cap L^{\infty}(\mathbb{R}^{+};H^{1/2}(\Gamma_{s}))$ and $h_1(0) = G_3^0$ in $\Gamma_s$. Let $\eta^{*}$ be the solution of equation \eqref{2.3.1} with $G_{3}$ replaced by $h_{1}$. From Theorem \ref{t23} and inequality \eqref{2.3.5} one obtains
				 	\begin{equation}\label{nfe}
				 	\begin{split}
				 	&\|\eta^{*}\|_ {L^{\infty}(0,T;H^{9/2}(\Gamma_{s}))} +\|\eta_{t}^{*}\|_{L^{2}(0,T;H^{4}(\Gamma_{s}))}+\|\eta_{t}^{*}\|_{L^{\infty}(0,T;H^{3}(\Gamma_{s}))}+\|\eta_{tt}^{*}\|_{ L^{2}(0,T;H^{2}(\Gamma_{s}))}\\
				 	&+\|\eta_{tt}^{*}\|_{L^{\infty}([0,T]; H^{1}(\Gamma_{s}))}
				 	+\|\eta_{ttt}^{*}\|_{L^{2}(0,T;L^{2}(\Gamma_{s}))}\\
				 	&\leqslant c(\|(h_{1})_{t}\|_{L^{2}(0,\infty;L^{2}(\Gamma_{s}))}+\|h_{1}\|_{L^{\infty}(0,\infty;H^{1/2}(\Gamma_{s}))}+\|G_{3}^0\|_{H^{1}(\Gamma_{s})}+\|\eta_{1}\|_{H^{3}(\Gamma_{s})})\\
				 	& \leqslant c_{4}(\|G_{3}^0\|_{H^{1}(\Gamma_{s})}+\|\eta_{1}\|_{H^{3}(\Gamma_{s})})
				 	\end{split}
				 	\end{equation}  
				 	where the constant $c_{4}$ is independent of $T.$ One further uses \eqref{G30-bis} to check that
				 	\begin{equation}\label{etatt*}
				 	\begin{array}{l}
				 	\eta^{*}_{tt}(\cdot,0)=\delta\eta_{1,xx}-(\mu+2\mu')(u_{0})_{2,z}+P(\rho_{0}). 
				 	\end{array}
				 	\end{equation}
				 	In view of \eqref{nfe} and \eqref{etatt*} we get that $\eta^{*}$ satisfies \eqref{teB4} and \eqref{etatt0}.\\
				 	Since $\eta^{*}(.,0)=0,$ we observe the following by interpolation
				 	\begin{equation}\label{3.1}
				 	\begin{split}
				 	\|{\eta}^{*}\|_{C^{0}(\overline{\Sigma}_{T}^{s})}&\leqslant c\|{\eta}^{*}\|^{1/3}_{L^{\infty}(0,T; H^{1}(\Gamma_{s}))}\|{\eta}^{*}\|^{2/3}_{L^{\infty}(0,T;H^{2}(\Gamma_{s}))}\\[1.mm]
				 	& \leqslant cT^{1/3}\big(\|\eta_{t}^{*}\|_{L^{\infty}(0,T; H^{1}(\Gamma_{s}))}\big)^{1/3}\cdot \big(\|{\eta}^{*}\|_{{L^{\infty}(0,T;H^{2}(\Gamma_{s}))}}\big)^{2/3}.
				 	\end{split}
				 	\end{equation}	
				 	At this point we set
				 		\begin{equation}\label{coBi}
				 		\begin{split}
				 		B^{*}_{0}=\mbox{max}\{c_{5}(\|G_{2}^0\|_{H^{1}(\Omega)}+\|{\bf u}_{0}-z\eta_{1}\vec{e_{2}}\|_{H^{3}(\Omega)}),&\,\, \|\rho_{0}-\overline\rho\|_{H^{2}(\Omega)},\\
				 		& c_{4}(\|G_{3}^0\|_{H^{1}(\Gamma_{s})}+\|\eta_{1}\|_{H^{3}(\Gamma_{s})})\}
				 		\end{split}
				 		\end{equation}
				 		and  for all $1\leqslant i\leqslant 4,$ $B_{i}\geqslant B^{*}_{0}.$\\ 
				 		Hence in view of \eqref{3.1}, there exists $T^{*}_{0}(B_{1},B_{2},B_{3},B_{4})\in (0, \min\{1,\overline{T}\})$ such that for all $0<T\leqslant T^{*}_{0}(B_{1},B_{2},B_{3},B_{4})$ we verify that
				 	$$1+{\eta}^{*}\geqslant\delta_{0}>0\quad\mbox{on}\quad \Sigma^{s}_{T},$$
				 	$i.e$ $\eta^{*}$ satisfies \eqref{1etad0}.\\
				 	$Step\, 3.$\, One easily checks that $\sigma^{*}={\rho}_{0}-\overline{\rho}$ verifies \eqref{tsB12} and \eqref{tsgmM}.\\
				 	We further observe that $(\sigma^{*},{\bf w}^{*},\eta^{*})$ satisfies \eqref{tweq0} and \eqref{tswe0} automatically by construction.\\ 
				 	 So we have shown that if we choose $B^{*}_{0}$ (and hence $B_{i}\geqslant B^{*}_{0},$ for all $1\leqslant i\leqslant 4$) as in \eqref{coBi} and $0<T\leqslant T^{*}_{0}(B_{1},B_{2},B_{3},B_{4})$ then $(\sigma^{*}={\rho}_{0}-\overline{\rho},{\bf w}^{*},\eta^{*})\in \mathscr{C}_{T}(B_{1},B_{2},B_{3},B_{4}),$ $i.e.$
				 	$$\mathscr{C}_{T}(B_{1},B_{2},B_{3},B_{4})\neq\varnothing.$$
				 \end{proof}
				 \begin{remark}
				 	Observe from the proof of Lemma \ref{nonempty}, the constant $T_{0}^{*}(B_{1},B_{2},B_{3},B_{4})$ depends on $\delta_{0}\in (0,1).$ Since $\delta_{0}$ is fixed (see \eqref{fixd0}) we do not write explicitly the dependence of $T_{0}^{*}(B_{1},B_{2},B_{3},B_{4})$ on $\delta_{0}.$ 
				 \end{remark}
				\subsection{For small enough $T,$ $L$ maps $\mathscr{C}_{T}(B_{1},B_{2},B_{3},B_{4})$ into itself}\label{Enht}
			To prove that the map $L$ admits a fixed point we first show that for $T$ small enough and a suitable choice of parameters $(B_1, B_2, B_3, B_4)$, the set $\mathscr{C}_{T}(B_{1},B_{2},B_{3},B_{4})$ is mapped into itself by $L.$\\
			 Provided $(\widetilde{\sigma},\widetilde{\bf w},\widetilde{\eta})\in \mathscr{C}_{T}(B_{1},B_{2},B_{3},B_{4}),$	we have to estimate the terms $G_{1}(\widetilde{\sigma},\widetilde{\bf w},\widetilde{\eta}),$ $G_{2}(\widetilde{\sigma},\widetilde{\bf w},\widetilde{\eta}),$ $G_{3}(\widetilde{\sigma},\widetilde{\bf w},\widetilde{\eta})$ and $\widetilde{W}(\widetilde{\bf w},\widetilde{\eta})$ (recall the definition of $G_{1},$ $G_{2},$ $G_{3}$ and $\widetilde{W}$ from \eqref{1.22} and \eqref{dotW} respectively). For this purpose we will require some results which we collect in the following section.
			\subsubsection{Useful lemmas}
			The following lemma concerning the Sobolev regularity of the product of two functions is standard in the literature.
				\begin{lem}\label{lfpss}
					Consider a bounded domain $\Omega_{0}$ in $\mathbb{R}^{d}$ (for $d=1,2$).
					Let $r>\frac{d}{2},$ $0\leqslant s\leqslant r.$ If $v\in H^{r}(\Omega_{0})$ and $w\in H^{s}(\Omega_{0})$ then $vw\in H^{s}(\Omega_{0})$ with
					$$\|vw\|_{H^{s}(\Omega_{0})}\leqslant K(\Omega_{0})\|v\|_{H^{r}(\Omega_{0})}\|w\|_{H^{s}(\Omega_{0})}.$$ 
					Similar estimates hold when $v$ and $w$ are vector valued functions i.e for ${\bf v}\in {\bf H}^{r}(\Omega_{0})$ and ${\bf w}\in {\bf H}^{s}(\Omega_{0}).$
				\end{lem}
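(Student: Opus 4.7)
The plan is to reduce the statement to a classical Sobolev multiplication estimate on $\mathbb{R}^d$ and then transfer it back to $\Omega_0$ via extension. First I would invoke a universal extension operator $E\colon H^s(\Omega_0)\to H^s(\mathbb{R}^d)$, bounded for every $0\leqslant s\leqslant r$ (e.g.\ Stein's extension, available since $\Omega_0$ is smooth enough for our purposes), apply it to both $v$ and $w$, and observe that $(Ev)(Ew)$ restricts to $vw$ on $\Omega_0$. This reduces the inequality to proving
\[
\|VW\|_{H^s(\mathbb{R}^d)}\leqslant c\,\|V\|_{H^r(\mathbb{R}^d)}\|W\|_{H^s(\mathbb{R}^d)}
\]
for $V,W\in \mathcal{S}(\mathbb{R}^d)$ by density, with the constant in the lemma absorbing the operator norm of $E$ and a restriction.

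Next, for integer $s\in\{0,1,\dots,\lfloor r\rfloor\}$ I would argue directly by Leibniz: for a multi-index $|\alpha|\leqslant s$,
\[
\partial^{\alpha}(VW)=\sum_{\beta\leqslant \alpha}\binom{\alpha}{\beta}\partial^{\beta}V\,\partial^{\alpha-\beta}W,
\]
and control each product in $L^2$ by Hölder, splitting into the case $|\beta|=0$ (where $\|V\|_{L^\infty}\|\partial^{\alpha}W\|_{L^2}$ is handled by the Sobolev embedding $H^r\hookrightarrow L^\infty$, valid since $r>d/2$) and the case $|\beta|\geqslant 1$ (where one applies Gagliardo–Nirenberg to put $\partial^{\beta}V$ into $L^{p_1}$ and $\partial^{\alpha-\beta}W$ into $L^{p_2}$ with $1/p_1+1/p_2=1/2$, choosing $p_i$ so that both factors are controlled by $\|V\|_{H^r}$ and $\|W\|_{H^s}$ respectively). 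For non-integer $s\in[0,r]$ I would then interpolate: fixing $V$, the multiplication operator $M_V\colon W\mapsto VW$ is bounded on $L^2$ with norm $\lesssim\|V\|_{L^\infty}\lesssim\|V\|_{H^r}$ and on $H^{\lceil r\rceil}$ with norm $\lesssim\|V\|_{H^r}$ by the integer case, and complex interpolation of the $H^s$-scale gives the claim for all intermediate $s$.

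The main obstacle is the borderline behaviour at the top endpoint when $r$ is not an integer: the Leibniz argument requires $v\in H^{\lceil r\rceil}$ to directly handle $s=\lceil r\rceil$, whereas we only assume $v\in H^r$. The cleanest way around this is to establish the estimate first on $H^{r_0}$ for some integer $r_0>d/2$ (independent of the true $r$), and then use that the hypothesis $r>d/2$ enters only through the Sobolev embedding $H^r\hookrightarrow L^\infty$; complex interpolation between the $s=0$ and $s=r_0$ endpoints, together with monotonicity of the multiplier norm in $\|V\|_{L^\infty}+\|V\|_{H^{r_0}}$, then yields the desired bound by $\|V\|_{H^r}$. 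The vector-valued analogue follows componentwise with an identical argument. Since this is an entirely standard multiplier estimate (see, e.g., the treatments in Runst–Sickel or Taylor), in practice one would simply cite it.
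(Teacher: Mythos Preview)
The paper does not prove this lemma at all: it simply introduces it as ``standard in the literature'' and moves on. Your final sentence --- that in practice one would simply cite Runst--Sickel or Taylor --- is therefore exactly the paper's approach, and for the purposes of matching the paper nothing more is needed.

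Your sketch itself is fine for the cases that actually arise in the paper (where the multiplier lies in $H^2(\Omega)$ with $d=2$, i.e.\ $r$ is an integer), but the workaround you propose for non-integer $r$ does not quite close. Interpolating $M_V$ between $L^2$ and $H^{r_0}$ with an integer $r_0>d/2$ gives a bound of the form $\|V\|_{L^\infty}^{1-\theta}\|V\|_{H^{r_0}}^{\theta}$, which still requires control of $\|V\|_{H^{r_0}}$; if $r<r_0$ you cannot dominate this by $\|V\|_{H^r}$, and if $r_0<r$ the interpolation only reaches $s\leqslant r_0$, not $s=r$. For $d=2$ and $1<r<2$ there is no integer $r_0$ in $(d/2,r]$, so the endpoint $s=r$ genuinely escapes this argument. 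The standard remedies are either a direct Fourier-side proof (splitting $\widehat{VW}=\widehat V*\widehat W$ according to which frequency dominates) or a paraproduct decomposition; both yield the full range $0\leqslant s\leqslant r$ with the constant depending only on $\|V\|_{H^r}$. Since the paper never uses a non-integer $r$, this is a cosmetic issue here, but worth being aware of if you intend to present the argument rather than cite it.
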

				\begin{lem}\label{leoin}
					Let $T<\overline{T}$ (recall that we have fixed $\overline{T}$ in \eqref{Tbar}). We assume that ${\bf f}\in {\bf H}^{2,1}_{\Sigma_{T}}(Q_{T}).$ As usual we use the notation ${\bf f}_{z}$ to denote the directional derivative $\partial_{z}{\bf f}$ of ${\bf f}$ with respect to $z.$ Also suppose that $\Gamma_{s}$ is a smooth subset of $\Gamma.$ Then the trace ${\bf f}_{z}\mid_{\Sigma_{T}}$ on $\Gamma_{s}$ (i.e the normal derivative of ${\bf f}$ on $\Gamma_{s}$) belongs to $H^{1/6}(0,T;{\bf L}^{2}(\Gamma_{s})).$ In particular there exists a constant $K>0$ such that for all ${\bf f}\in {\bf H}^{2,1}_{\Sigma_{T}}(Q_{T})$ we have the following
					\begin{equation}\label{ibin}
						\begin{array}{l}
							\|{\bf f}_{z}\mid_{\Sigma_{T}}\|_{L^{2}(0,T;{\bf L}^{2}(\Gamma_{s}))}\leqslant T^{1/6}K(\|{\bf f}(0)\|_{{\bf H}^{1}_{0}(\Omega)}+\|{\bf f}\|_{{\bf H}^{2,1}_{\Sigma_{T}}(Q_{T})}),
						\end{array}
					\end{equation}
					where ${\bf f}(0)$ denotes the function ${\bf f}$ at time $t=0.$ We specify that in our case the space ${\bf H}^{2,1}_{\Sigma_{T}}(Q_{T})$ is endowed with the following norm
					$$\|{\bf f}\|_{{\bf H}^{2,1}_{\Sigma_{T}}(Q_{T})}=\|{\bf f}\|_{L^{2}(0,T;{\bf H}^{2}(\Omega))}+\|{\bf f}_{t}\|_{L^{2}(0,T;{\bf L}^{2}(\Omega))}.$$
				\end{lem}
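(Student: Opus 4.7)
The plan is to combine a space-time interpolation inequality with the trace theorem on the fixed domain $\Omega$ and a H\"older inequality in time chosen so that the surplus integrability produces the advertised factor $T^{1/6}$. The exponent matching is $\tfrac{1}{2}=\tfrac{1}{3}+\tfrac{1}{6}$, which corresponds to the interpolation pair $\bigl(L^{\infty}(0,T;\mathbf{H}^{1}(\Omega)),\,L^{2}(0,T;\mathbf{H}^{2}(\Omega))\bigr)$ evaluated at $\theta=\tfrac{2}{3}$.

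The first step is to establish the embedding $\mathbf{H}^{2,1}_{\Sigma_{T}}(Q_{T})\hookrightarrow L^{\infty}(0,T;\mathbf{H}^{1}(\Omega))$ with a constant independent of $T$. Since $\mathbf{f}=0$ on $\Sigma_{T}$ forces $\mathbf{f}_{t}=0$ on $\Sigma_{T}$ as well, integration by parts yields $\tfrac{d}{dt}\|\nabla\mathbf{f}\|^{2}_{\mathbf{L}^{2}(\Omega)}=-2\int_{\Omega}\mathbf{f}_{t}\cdot\Delta\mathbf{f}$; Cauchy--Schwarz and Young's inequality then give
\begin{equation*}
\|\mathbf{f}\|_{L^{\infty}(0,T;\mathbf{H}^{1}(\Omega))}\leqslant C\bigl(\|\mathbf{f}(0)\|_{\mathbf{H}^{1}_{0}(\Omega)}+\|\mathbf{f}\|_{\mathbf{H}^{2,1}_{\Sigma_{T}}(Q_{T})}\bigr),
\end{equation*}
with $C$ independent of $T$; note that $\mathbf{f}(0)\in\mathbf{H}^{1}_{0}(\Omega)$ follows from $\mathbf{f}|_{\Sigma_{T}}=0$.

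Next, the Gagliardo--Nirenberg interpolation $\|\mathbf{f}(\cdot,t)\|_{\mathbf{H}^{5/3}(\Omega)}\leqslant C\|\mathbf{f}(\cdot,t)\|_{\mathbf{H}^{1}(\Omega)}^{1/3}\|\mathbf{f}(\cdot,t)\|_{\mathbf{H}^{2}(\Omega)}^{2/3}$, raised to the third power and integrated in $t$, yields
\begin{equation*}
\|\mathbf{f}\|_{L^{3}(0,T;\mathbf{H}^{5/3}(\Omega))}\leqslant C\|\mathbf{f}\|_{L^{\infty}(0,T;\mathbf{H}^{1}(\Omega))}^{1/3}\|\mathbf{f}\|_{L^{2}(0,T;\mathbf{H}^{2}(\Omega))}^{2/3}.
\end{equation*}
The classical trace theorem on $\Omega$ sends $\mathbf{v}\in\mathbf{H}^{5/3}(\Omega)$ to $\partial_{z}\mathbf{v}|_{\Gamma}\in H^{1/6}(\Gamma)\hookrightarrow\mathbf{L}^{2}(\Gamma)$ continuously (the trace index is $5/3-\tfrac{3}{2}=\tfrac{1}{6}$), so
\begin{equation*}
\|\mathbf{f}_{z}|_{\Sigma_{T}}\|_{L^{3}(0,T;\mathbf{L}^{2}(\Gamma_{s}))}\leqslant C\|\mathbf{f}\|_{L^{3}(0,T;\mathbf{H}^{5/3}(\Omega))},
\end{equation*}
and H\"older in time with $\tfrac{1}{2}=\tfrac{1}{3}+\tfrac{1}{6}$ produces
\begin{equation*}
\|\mathbf{f}_{z}|_{\Sigma_{T}}\|_{L^{2}(0,T;\mathbf{L}^{2}(\Gamma_{s}))}\leqslant T^{1/6}\|\mathbf{f}_{z}|_{\Sigma_{T}}\|_{L^{3}(0,T;\mathbf{L}^{2}(\Gamma_{s}))}.
\end{equation*}
Chaining the four estimates yields \eqref{ibin}. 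The assertion $\mathbf{f}_{z}|_{\Sigma_{T}}\in H^{1/6}(0,T;\mathbf{L}^{2}(\Gamma_{s}))$ follows from the anisotropic parabolic trace theory of Lions--Magenes, which in fact delivers the stronger regularity $\partial_{n}\mathbf{f}\in H^{1/2,1/4}(\Sigma_{T})$; only the weaker consequence is needed for \eqref{ibin}.

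The single delicate point in this chain is the $T$-independence of every constant. The trace theorem and the Gagliardo--Nirenberg inequality are stated on the fixed spatial domain $\Omega$ and therefore contribute no $T$-dependence; the one potentially $T$-dependent step, the embedding into $L^{\infty}(0,T;\mathbf{H}^{1}(\Omega))$, is handled cleanly by the energy identity of the first step, which isolates $\|\mathbf{f}(0)\|_{\mathbf{H}^{1}_{0}(\Omega)}$ on the right-hand side exactly as required by \eqref{ibin}.
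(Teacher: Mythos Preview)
Your proof is correct and follows the same overall architecture as the paper's---H\"older in time to pass from $L^{2}$ to $L^{3}$, the trace map $\mathbf{H}^{5/3}(\Omega)\to \mathbf{L}^{2}(\Gamma_{s})$ for the normal derivative, and the interpolation $\|\mathbf{f}\|_{L^{3}(0,T;\mathbf{H}^{5/3})}\leqslant C\|\mathbf{f}\|_{L^{\infty}(0,T;\mathbf{H}^{1})}^{1/3}\|\mathbf{f}\|_{L^{2}(0,T;\mathbf{H}^{2})}^{2/3}$---but it differs in how the $T$-independent bound on $\|\mathbf{f}\|_{L^{\infty}(0,T;\mathbf{H}^{1})}$ is obtained. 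The paper splits $\mathbf{f}=\mathbf{f}^{*}+(\mathbf{f}-\mathbf{f}^{*})$, where $\mathbf{f}^{*}$ solves the heat equation with initial datum $\mathbf{f}(0)$; the piece $\mathbf{f}^{*}$ is controlled by standard parabolic estimates, while $\mathbf{f}-\mathbf{f}^{*}$ vanishes at $t=0$ and is extended by zero to the fixed interval $(T-\overline{T},T)$, so that the anisotropic embedding $\mathbf{H}^{2,1}\hookrightarrow H^{1/6}(\cdot\,;\mathbf{H}^{5/3})\hookrightarrow L^{3}(\cdot\,;\mathbf{H}^{5/3})$ can be applied on a domain of length $\overline{T}$, yielding a constant that depends only on $\overline{T}$. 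Your energy identity $\tfrac{d}{dt}\|\nabla\mathbf{f}\|_{\mathbf{L}^{2}}^{2}=-2\int_{\Omega}\mathbf{f}_{t}\cdot\Delta\mathbf{f}$ (valid for $\mathbf{f}\in L^{2}(0,T;\mathbf{H}^{2}\cap\mathbf{H}^{1}_{0})\cap H^{1}(0,T;\mathbf{L}^{2})$) bypasses both the auxiliary heat problem and the extension, giving the $L^{\infty}(0,T;\mathbf{H}^{1})$ bound in one stroke and with a constant that is manifestly independent of both $T$ and $\overline{T}$. This is a genuine simplification: the paper's detour through $\mathbf{f}^{*}$ and the zero-extension is precisely designed to work around the possible $T$-dependence of the parabolic interpolation constant, a difficulty that your energy argument avoids altogether. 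The price is only that your route does not directly exhibit the $H^{1/6}$ time regularity of the trace (which the paper's extension argument does, at least for $\mathbf{f}-\mathbf{f}^{*}$), but you correctly note that this follows from the Lions--Magenes anisotropic trace theorem and is in any case not needed for the inequality \eqref{ibin}.
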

				\begin{remark}
					The appearance of ${\bf f}(0)$ in the inequality \eqref{ibin} might seem redundant since for all, ${\bf f}\in {\bf H}^{2,1}_{\Sigma_{T}}(Q_{T})$
					$$\|{\bf f}(0)\|_{{\bf H}^{1}_{0}(\Omega)}\leqslant K_{T}\|{\bf f}\|_{{\bf H}^{2,1}_{\Sigma_{T}}(Q_{T})}.$$ 
					But the constant $K_{T}$ there may depend on $T$ while the constant $K$ in \eqref{ibin} is independent of $T.$ This is the reason why we prefer working with \eqref{ibin}.
				\end{remark}
				\begin{proof}[Proof of Lemma \ref{leoin}]
					We have to estimate $\|{\bf f}_{z}\mid_{\Sigma_{T}}\|_{L^{2}(0,T;L^{2}(\Gamma_
						{s}))}.$ Using H\"{o}lder's inequality we get the following
					\begin{equation}\label{il32}
					\begin{array}{l}
					\displaystyle
					\left(\int\limits_{0}^{T}\|{\bf f}_{z}\mid_{\Sigma_{T}}\|^{2}_{L^{2}(\Gamma_{s})}\right)^{1/2}\leqslant \left(\int\limits_{0}^{{T}}\|{\bf f}_{z}\mid_{\Sigma_{T}}\|^{3}_{L^{2}(\Gamma_{s})}\right)^{1/3}T^{1/6}\leqslant K(\Omega)\left(\int\limits_{0}^{{T}}\|{\bf f}\|^{3}_{{\bf H}^{5/3}{(\Omega)}}\right)^{1/3}T^{1/6}.
					\end{array}
					\end{equation}
					To prove \eqref{ibin}, in view of \eqref{il32} it is enough to show the following inequality
						\begin{equation}\label{triain3}
						\begin{array}{l}
						\|{\bf f}\|_{L^{3}(0,T;{\bf H}^{5/3}(\Omega))}\leqslant K(\Omega,\overline{T})(\|{\bf f}\|_{{\bf H}^{2,1}_{\Sigma_{T}}(Q_{T})}+\|{\bf f}(0)\|_{{\bf H}^{1}_{0}(\Omega)}).
						\end{array}
						\end{equation}
					In order to prove \eqref{triain3}, first let us consider the solution ${\bf{f}}^{*}$ of
					\begin{equation}\label{ellstf}
						\left\{ \begin{array}{lll}
							{\bf f}^{*}_{t}-\Delta{\bf f}^{*}=0&\quad\mbox{in}\quad& Q_{T},\\[1.mm]
							{\bf f}^{*}=0&\quad\mbox{on}\quad &\Sigma_{T},\\[1.mm]
							{\bf f}^{*}(.,0)={\bf f}(0)&\quad\mbox{in}\quad&\Omega.
						\end{array}\right.
					\end{equation}
					As ${\bf f}(0)\in {\bf H}^{1}_{0}(\Omega),$ ${\bf f}^{*}\in {\bf H}^{2,1}_{\Sigma_{T}}(Q_{T}).$ It is also well known that there exists a constant $K(\Omega)$ such that ${\bf f}^{*}$ satisfies the following inequalities 
					\begin{equation}\label{inell}
						\begin{array}{ll}
							(i)&\|{\bf f}^{*}\|_{{\bf H}^{2,1}_{\Sigma_{T}}(Q_{T})}\leqslant K(\Omega)\|{\bf f}(0)\|_{{\bf H}^{1}_{0}(\Omega)},\\[1.mm]
							(ii)& \|{\bf f}^{*}\|_{L^{\infty}(0,T;{\bf H}^{1}_{0}(\Omega))}+\|{\bf f}^{*}\|_{L^{2}(0,T;{\bf H}^{2}(\Omega))}\leqslant K(\Omega)\|{\bf f}(0)\|_{{\bf H}^{1}_{0}(\Omega)}.
						\end{array}
					\end{equation}
					Now we will estimate the norm of ${\bf f}^{*}$ in $L^{3}(0,T;{\bf H}^{5/3}(\Omega)).$ 
					Using interpolation we have for $a.e$ $t$
					$$\|{\bf f}^{*}(t)\|_{{\bf H}^{5/3}(\Omega)}\leqslant K(\Omega)\|{\bf f}^{*}(t)\|^{2/3}_{{\bf H}^{2}(\Omega)}\|{\bf f}^{*}(t)\|^{1/3}_{{\bf H}^{1}_{0}(\Omega)}.$$
					From the last inequality one obtains the following
					\begin{equation}\label{iafin}
						\begin{array}{l}
						\displaystyle
							\|{\bf f}^{*}\|_{L^{3}(0,T;{\bf H}^{5/3}(\Omega))}=\left(\int\limits_{0}^{T}\|{\bf f}^{*}(t)\|^{3}_{{\bf H}^{5/3}(\Omega)}\right)^{1/3}\leqslant K(\Omega)\|{\bf f}^{*}\|^{1/3}_{L^{\infty}(0,T;{\bf H}^{1}_{0}(\Omega))}\|{\bf f}^{*}\|^{2/3}_{L^{2}(0,T;{\bf H}^{2}(\Omega))}.
						\end{array}
					\end{equation}
					Hence using inequality $(ii)$ of \eqref{inell} in \eqref{iafin} we obtain
					\begin{equation}\label{iafin1}
						\begin{array}{l}
							\|{\bf f}^{*}\|_{L^{3}(0,T;{\bf H}^{5/3}(\Omega))}\leqslant K(\Omega)\|{\bf f}(0)\|_{{\bf H}^{1}_{0}(\Omega)}.
						\end{array}
					\end{equation}
					Now let us observe that $({\bf f}-{\bf f}^{*})(0)=0.$ Extend the function $({\bf f}-{\bf f}^{*})$ by defining it zero in the time interval $(T-\overline{T},0)$ (the extended function is also denoted by $({\bf f}-{\bf f}^{*})$). In what follows we will use the notation 
					\begin{equation}\nonumber
					\begin{array}{l}
					{Q_{T-\overline{T},T}}=\Omega\times(T-\overline{T},T).
					\end{array}
					\end{equation}
					We also introduce the space ${\bf H}^{2,1}_{\Sigma_{T}}({Q_{T-\overline{T},T}})$ which is defined as in \eqref{fntlsp} with $Q_{T}$ replaced by ${Q_{T-\overline{T},T}}.$\\
					One can check that the extended function $({\bf f}-{\bf f}^{*})\in {\bf H}^{2,1}_{\Sigma_{T}}({Q_{T-\overline{T},T}})$ and
					\begin{equation}\label{noeq}
						\begin{array}{l}
							\|({\bf f}-{\bf f}^{*})\|_{{\bf H}^{2,1}_{\Sigma_{T}}({Q_{T-\overline{T},T}})}=\|({\bf f}-{\bf f}^{*})\|_{{\bf H}^{2,1}_{\Sigma_{T}}(Q_{T})}.
						\end{array}
					\end{equation}
					Again due to the embedding ${\bf H}^{2,1}_{\Sigma_{T}}({Q_{T-\overline{T},T}})\hookrightarrow H^{1/6}(T-\overline{T},{T};{\bf H}^{5/3}(\Omega))$ we have the following 
					\begin{equation}\label{il321}
						\begin{array}{l}
							\|{\bf f}-{\bf f}^{*}\|_{{H}^{1/6}(T-\overline{T},T;{\bf H}^{5/3}(\Omega))}\leqslant K(\overline{T},\Omega)\|{\bf f}-{\bf f}^{*}\|_{{\bf H}^{2,1}_{\Sigma_{T}}({Q_{T-\overline{T},T}})}.
						\end{array}
					\end{equation}
					Since $H^{1/6}(T-\overline{T},T)$ is continuously embedded into $L^{3}(T-\overline{T},T)$, hence from \eqref{il321}
					\begin{equation}\label{il32ne}
						\begin{array}{l}
							\|{\bf f}-{\bf f}^{*}\|_{{L}^{3}(T-\overline{T},T;{\bf H}^{5/3}(\Omega))}\leqslant K(\overline{T},\Omega)\|{\bf f}-{\bf f}^{*}\|_{{\bf H}^{2,1}_{\Sigma_{T}}({Q_{T-\overline{T},T}})}.
						\end{array}
					\end{equation}
					Use of triangle inequality furnishes the following
					\begin{equation}\label{triain}
						\begin{array}{l}
							\|{\bf f}\|_{L^{3}(0,T;{\bf H}^{5/3}(\Omega))}\leqslant K(\|{\bf f}-{\bf f}^{*}\|_{L^{3}(0,T;{\bf H}^{5/3}(\Omega))}+\|{\bf f}^{*}\|_{L^{3}(0,T;{\bf H}^{5/3}(\Omega))}).
						\end{array}
					\end{equation}
					Incorporate inequalities \eqref{iafin1} and \eqref{il32ne} in \eqref{triain} in order to obtain
					\begin{equation}\label{triain1}
						\begin{array}{l}
							\|{\bf f}\|_{L^{3}(0,T;{\bf H}^{5/3}(\Omega))}\leqslant K(\Omega,\overline{T})(\|{\bf f}-{\bf f}^{*}\|_{{\bf H}^{2,1}_{\Sigma_{T}}({Q_{T-\overline{T},T}})}+\|{\bf f}(0)\|_{{\bf H}^{1}_{0}(\Omega)}).
						\end{array}
					\end{equation}
					In view of the equality \eqref{noeq} we can obtain the following from \eqref{triain1},
					\begin{equation}\label{triain2}
						\begin{array}{l}
							\|{\bf f}\|_{L^{3}(0,T;{\bf H}^{5/3}(\Omega))}\leqslant K(\Omega,\overline{T})(\|{\bf f}-{\bf f}^{*}\|_{{\bf H}^{2,1}_{\Sigma_{T}}(Q_{T})}+\|{\bf f}(0)\|_{{\bf H}^{1}_{0}(\Omega)}).
						\end{array}
					\end{equation}
					Once again use triangle inequality and \eqref{inell} $(i)$, in order to prove \eqref{triain3}.\\
					Finally use \eqref{triain3} in \eqref{il32} to show \eqref{ibin}. This completes the proof.
				\end{proof}
				The following lemma is a simple consequence of the fundamental theorem of calculus, whose proof is left to the reader.
				\begin{lem}\label{futcl}
					Fix $i\geqslant 0$ and a domain $\Omega_{0}$ in $\mathbb{R}^{d}$ ($d$ is either $1$ or $2$). Then there exists a constant $K>0$ such that for all $\psi\in H^{1}(0,T;H^{i}(\Omega_{0})),$ the following holds 
					\begin{equation}\label{eoLit}
						\begin{array}{l}
						\displaystyle
							\|\psi\|_{L^{\infty}(0,T;H^{i}(\Omega_{0}))}\leqslant K(\|\psi(0)\|_{H^{i}(\Omega_{0})}+T^{1/2}\|\psi_{t}\|_{L^{2}(0,T;H^{i}(\Omega_{0}))}),
						\end{array}
					\end{equation} 
					where $\psi(0)$ denotes $\psi$ at time $t=0.$ The inequality \eqref{eoLit} is true even for a vector valued function ${\Psi}\in H^{1}(0,T;{\bf H}^{i}(\Omega_{0})).$
				\end{lem}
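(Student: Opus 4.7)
The plan is to reduce the statement to the elementary fact that a function with a square-integrable derivative in time is the time-integral of that derivative plus its initial value, and to then control the resulting integral by the Cauchy--Schwarz inequality in the time variable.

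First I would fix an arbitrary $\psi\in H^{1}(0,T;H^{i}(\Omega_{0}))$. By the continuous embedding $H^{1}(0,T;H^{i}(\Omega_{0}))\hookrightarrow C^{0}([0,T];H^{i}(\Omega_{0}))$, the values $\psi(0)$ and $\psi(t)$ are well defined for every $t\in[0,T]$, and $\psi$ satisfies the Banach space--valued fundamental theorem of calculus
\begin{equation*}
\psi(t)=\psi(0)+\int_{0}^{t}\psi_{t}(s)\,ds\qquad\mbox{in}\quad H^{i}(\Omega_{0}),
\end{equation*}
for all $t\in[0,T]$. Applying the $H^{i}(\Omega_{0})$ norm, the triangle inequality for Bochner integrals and Cauchy--Schwarz in the time variable give
\begin{equation*}
\|\psi(t)\|_{H^{i}(\Omega_{0})}\leqslant \|\psi(0)\|_{H^{i}(\Omega_{0})}+\int_{0}^{t}\|\psi_{t}(s)\|_{H^{i}(\Omega_{0})}\,ds\leqslant \|\psi(0)\|_{H^{i}(\Omega_{0})}+t^{1/2}\|\psi_{t}\|_{L^{2}(0,T;H^{i}(\Omega_{0}))},
\end{equation*}
for all $t\in[0,T]$. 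Since $t\leqslant T$, taking the essential supremum over $t\in[0,T]$ yields \eqref{eoLit} with $K=1$ (or $K=2$ if one prefers a bound that absorbs constants later).

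The vector-valued version is proved in exactly the same way component by component (or simply by replacing $H^{i}(\Omega_{0})$ by ${\bf H}^{i}(\Omega_{0})$ in the argument above, which changes nothing since the Bochner machinery is insensitive to whether the target Hilbert space is scalar- or vector-valued). There is no real obstacle here; the only point worth noting is that one must invoke the continuous inclusion $H^{1}(0,T;H^{i}(\Omega_{0}))\hookrightarrow C^{0}([0,T];H^{i}(\Omega_{0}))$ with a constant independent of $T$ on a bounded interval, which follows from standard one-dimensional Sobolev embedding after (if necessary) an extension argument on the time variable; alternatively, one can directly work with the absolutely continuous representative furnished by the fact that $\psi_{t}\in L^{2}(0,T;H^{i}(\Omega_{0}))$.
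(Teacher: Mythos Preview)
Your proof is correct and follows exactly the approach indicated by the paper, which states that the lemma ``is a simple consequence of the fundamental theorem of calculus, whose proof is left to the reader.'' Your argument via the Bochner-valued fundamental theorem of calculus and Cauchy--Schwarz in time is precisely what is intended.
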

				\subsubsection{Estimates of $G_{1},$ $G_{2},$ $G_{3}$ and $\widetilde{W}$} \label{estimates}   
				\begin{lem}\label{eog1}
					Let $B^{*}_{0}$ and $T^{*}_{0}$ are as in Lemma \ref{nonempty} and $B_{i}\geqslant B^{*}_{0}$ ($\forall\, 1\leqslant i\leqslant 4$). Then there exist $K_{1}=K_{1}(B_{1},B_{2},B_{3},B_{4})>0$ and $K_{2}>0$ such that for all $0<T\leqslant T^{*}_{0}(B_{1},B_{2},B_{3},B_{4})$ and $(\widetilde{\sigma},\widetilde{\bf w},\widetilde{\eta})\in \mathscr{C}_{T}(B_{1},B_{2},B_{3},B_{4}),$ $G_{1}(\widetilde{\sigma},\widetilde{\bf w},\widetilde{\eta})$ (defined in \eqref{1.22}) satisfies the following estimates  
						\begin{equation}\label{eg1}
						\begin{array}{lll}
						&(i)&\|G_{1}(\widetilde{\sigma},\widetilde{\bf w},\widetilde{\eta})\|_{L^{1}(0,T;H^{2}(\Omega))}\leqslant K_{1}(B_{1},B_{2},B_{3},B_{4})T^{1/2},\\[1.mm]
						&(ii)& \|G_{1}(\widetilde{\sigma},\widetilde{\bf w},\widetilde{\eta})\|_{L^{\infty}(0,T; H^{1}(\Omega))}\leqslant K_{2}\|\rho_{0}\mathrm{div}({\bf u}_{0})\|_{ H^{1}(\Omega)}+K_{1}(B_{1},B_{2},B_{3},B_{4})T^{1/2}.
						\end{array}
						\end{equation}
						\end{lem}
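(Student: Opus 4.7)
The strategy is to combine the Sobolev product estimates of Lemma \ref{lfpss} (noting that $H^{2}(\Omega)$ is an algebra in dimension $d=2$) with the pointwise norm bounds \eqref{tsB12}--\eqref{teB4} and the positivity assumption \eqref{1etad0*}, which implies $\|(1+\widetilde\eta)^{-1}\|_{L^{\infty}(0,T;H^{9/2}(\Gamma_{s}))} \leqslant C(\delta_{0},B_{4})$. Recalling the formula \eqref{1.22} and the expression \eqref{F123} of $F_{1}$, I would first decompose
\[
G_{1}(\widetilde\sigma,\widetilde{\bf w},\widetilde\eta) = -(\widetilde\sigma+\overline\rho)\bigl(\mathrm{div}\widetilde{\bf w}+\widetilde\eta_{t}\bigr) + \frac{1}{1+\widetilde\eta}\Bigl((\widetilde\sigma+\overline\rho)\widetilde w_{1,z}z\widetilde\eta_{x}+\widetilde\eta(\widetilde\sigma+\overline\rho)(\widetilde w_{2,z}+\widetilde\eta_{t})\Bigr),
\]
and treat each of the resulting scalar products separately.

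\textbf{Part (i).} I would estimate $\|G_{1}(\cdot,t)\|_{H^{2}(\Omega)}$ pointwise in $t$ by iterating Lemma \ref{lfpss}. The factors $\widetilde\sigma+\overline\rho$, $\widetilde\eta$, $\widetilde\eta_{x}$ and $(1+\widetilde\eta)^{-1}$ belong to $L^{\infty}(0,T;H^{2}(\Omega))$ with bounds depending only on $B_{1}$, $B_{4}$ and $\delta_{0}$; the derivative factors $\mathrm{div}\widetilde{\bf w}$, $\widetilde w_{1,z}$, $\widetilde w_{2,z}$ belong to $L^{2}(0,T;H^{2}(\Omega))$ with bound $\leqslant B_{3}$; and $\widetilde\eta_{t}\in L^{2}(0,T;H^{4}(\Gamma_{s}))\hookrightarrow L^{2}(0,T;H^{2}(\Omega))$ with bound $\leqslant B_{4}$. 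Assembling these gives $\|G_{1}\|_{L^{2}(0,T;H^{2}(\Omega))}\leqslant K_{1}(B_{1},B_{2},B_{3},B_{4})$ with $K_{1}$ independent of $T$, and Cauchy--Schwarz yields the claimed bound
\[
\|G_{1}\|_{L^{1}(0,T;H^{2}(\Omega))} \leqslant T^{1/2}\|G_{1}\|_{L^{2}(0,T;H^{2}(\Omega))} \leqslant K_{1}(B_{1},B_{2},B_{3},B_{4})\,T^{1/2}.
\]

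\textbf{Part (ii).} The first step is to compute $G_{1}(\widetilde\sigma,\widetilde{\bf w},\widetilde\eta)\mid_{t=0}$ using the initial conditions \eqref{tswe0}. Since $\widetilde\eta(\cdot,0)=0$ (hence $\widetilde\eta_{x}(\cdot,0)=0$ on $\Gamma_{s}$), the entire $F_{1}$ contribution vanishes at $t=0$; moreover $\mathrm{div}\widetilde{\bf w}(\cdot,0)+\widetilde\eta_{t}(\cdot,0)=\mathrm{div}({\bf u}_{0}-z\eta_{1}\vec e_{2})+\eta_{1}=\mathrm{div}({\bf u}_{0})$, so
\[
G_{1}(\widetilde\sigma,\widetilde{\bf w},\widetilde\eta)\mid_{t=0} = -\rho_{0}\,\mathrm{div}({\bf u}_{0}).
\]
This explains the explicit first term in \eqref{eg1}(ii). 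I would then apply Lemma \ref{futcl} with $\psi=G_{1}$ and $i=1$ to get
\[
\|G_{1}\|_{L^{\infty}(0,T;H^{1}(\Omega))} \leqslant K\bigl(\|G_{1}\mid_{t=0}\|_{H^{1}(\Omega)} + T^{1/2}\|G_{1,t}\|_{L^{2}(0,T;H^{1}(\Omega))}\bigr),
\]
and the bound (ii) will follow once I control $\|G_{1,t}\|_{L^{2}(0,T;H^{1}(\Omega))}$ by a constant $K(B_{1},\ldots,B_{4})$ independent of $T$.

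\textbf{Controlling $G_{1,t}$ and the main obstacle.} Differentiating $G_{1}$ in time produces a sum of products where exactly one factor in each term carries a $\partial_{t}$: these are $\widetilde\sigma_{t}$ (bounded by $B_{2}$ in $L^{\infty}(H^{1})$), $\widetilde{\bf w}_{t}$ (bounded by $B_{3}$ in $L^{2}(H^{2})\cap L^{\infty}(H^{1})$), and $\widetilde\eta_{tt}$ (bounded by $B_{4}$ in $L^{2}(H^{2})\cap L^{\infty}(H^{1})$), while the remaining factors are still controlled by \eqref{tsB12}--\eqref{teB4} and \eqref{1etad0*}. The main technical obstacle is the bookkeeping: since $H^{1}(\Omega)$ in two dimensions does not embed into $L^{\infty}(\Omega)$, at each product I must use Lemma \ref{lfpss} with one factor in $H^{r}$ for some $r>1$. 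This is possible throughout because $\widetilde\sigma\in L^{\infty}(H^{2})$, $\widetilde{\bf w}\in L^{\infty}(H^{5/2})$ and $\widetilde\eta\in L^{\infty}(H^{9/2}(\Gamma_{s}))$ always provide such a factor with spatial regularity strictly above $1$, so every nonlinear term can be absorbed into a polynomial in $B_{1},\ldots,B_{4}$, $\delta_{0}^{-1}$, independent of $T$. Combining with the computation of $G_{1}\mid_{t=0}$ and the bound on $\|G_{1,t}\|_{L^{2}(H^{1})}$ then completes the proof of (ii).
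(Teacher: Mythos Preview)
Your proposal is correct and follows essentially the same route as the paper: Lemma~\ref{lfpss} for products, the bound on $(1+\widetilde\eta)^{-1}$ from \eqref{1etad0*}, H\"older to extract $T^{1/2}$ in part~(i), and Lemma~\ref{futcl} together with the identification $G_{1}\mid_{t=0}=-\rho_{0}\,\mathrm{div}({\bf u}_{0})$ for part~(ii).

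The only noteworthy difference is in the treatment of the $F_{1}$ piece in part~(ii). You apply Lemma~\ref{futcl} to the whole of $G_{1}$ and then bound $\|G_{1,t}\|_{L^{2}(0,T;H^{1}(\Omega))}$, which requires differentiating every factor of $F_{1}$ in time. The paper instead applies Lemma~\ref{futcl} only to the term $(\widetilde\sigma+\overline\rho)\,\mathrm{div}(\widetilde{\bf w}+z\widetilde\eta_{t}\vec e_{2})$, and for the $F_{1}$ contribution it exploits $\widetilde\eta_{x}(\cdot,0)=0$ directly: applying \eqref{eoLit} with $\psi=\widetilde\eta_{x}$ gives $\|\widetilde\eta_{x}\|_{L^{\infty}(0,T;H^{2}(\Gamma_{s}))}\leqslant T^{1/2}\|\widetilde\eta_{xt}\|_{L^{2}(0,T;H^{2}(\Gamma_{s}))}$, so the $F_{1}$ term already carries a $T^{1/2}$ factor in $L^{\infty}(0,T;H^{1}(\Omega))$ without ever computing $\partial_{t}F_{1}$. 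This shortcut spares some of the bookkeeping you anticipate; your approach is more uniform but requires checking a few more products. Both arrive at the same estimate.
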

						\begin{remark}
							In \eqref{eg1}, the constant $K_{2}$ does not depend on any of the $B_{i}$ ($1\leqslant i\leqslant 4$). 
						\end{remark}
			    \begin{proof}[Proof of Lemma \ref{eog1}]
				(i) We will first prove \eqref{eg1}$(i).$\\
				\smallskip
				{Estimate of $(\widetilde{\sigma}+\overline{\rho})\mbox{div}(\widetilde{\bf w}+z\widetilde{\eta}_{t}\vec{e}_{2})$ in $L^{1}(0,T;H^{2}(\Omega))$:}
				From  \eqref{tsweY}, we get that $(\widetilde{\sigma}+\overline{\rho})\in L^{\infty}(0,T;H^{2}(\Omega))$ and $(\widetilde{\bf w}+z\widetilde{\eta}_{t}\vec{e}_{2})\in L^{2}(0,T;{\bf H}^{2}(\Omega)).$ Hence we have the following inequality
				\begin{equation}\label{eog11}
				\begin{split}
				&\|(\widetilde{\sigma}+\overline{\rho})\mbox{div}(\widetilde{\bf w}+z\widetilde{\eta}_{t}\vec{e}_{2})\|_{L^{1}(0,T;H^{2}(\Omega))}\\[1.mm]
				&\leqslant K(\|(\widetilde{\sigma}+\overline{\rho})\|_{L^{\infty}(0,T;H^{2}(\Omega))}\|(\widetilde{\bf w}+z\widetilde{\eta}_{t}\vec{e}_{2})\|_{L^{1}(0,T;{\bf H}^{3}(\Omega))})\quad(\mbox{using Lemma}\,\ref{lfpss})\\[1.mm]
				& \leqslant KT^{1/2}(\|(\widetilde{\sigma}+\overline{\rho})\|_{L^{\infty}(0,T;H^{2}(\Omega))}\|(\widetilde{\bf w}+z\widetilde{\eta}_{t}\vec{e}_{2})\|_{L^{2}(0,T;{\bf H}^{3}(\Omega))})\quad(\mbox{using}\,\mbox{H\"{o}lder's inequality})\\[1.mm]
				& \leqslant K(B_{1},B_{3},B_{4})T^{1/2},\quad(\mbox{using }\eqref{tsB12},\eqref{twB3},\eqref{teB4}).
			    \end{split}
				\end{equation}
				{Estimate of $F_{1}(\widetilde{\sigma}+\overline{\rho},\widetilde{w}+z\widetilde{\eta}_{t}\vec{e}_{2},\widetilde{\eta})$} (defined in \eqref{F123}) in $L^{1}(0,T;H^{2}(\Omega))$: First observe that, as $\widetilde{\eta}\in L^{\infty}(0,T;H^{9/2}(\Gamma_{s}))$ and \eqref{1etad0} holds, one can verify the following
			 \begin{equation}\label{esp1be}
			 \begin{array}{l}
			 \displaystyle
			 \frac{1}{(1+\widetilde{\eta})}\in L^{\infty}(0,T;{H}^{9/2}(\Gamma_{s}))
			 \end{array}
			 \end{equation}
			 and 
			 \begin{equation}\label{eog1*}
			 \begin{array}{l}
			 \displaystyle
			 \left\|\frac{1}{(1+\widetilde{\eta})}\right\|_{L^{\infty}(0,T;{ H}^{9/2}(\Gamma_{s}))}\leqslant K\|\widetilde{\eta}\|_{L^{\infty}(0,T;{ H}^{9/2}(\Gamma_{s}))}\leqslant K(B_{4}),\quad(\mbox{using \eqref{teB4}}).
			 \end{array}
			 \end{equation}
			 Hence we get the following estimate of $\displaystyle\frac{z\widetilde{\eta}_{x}(\widetilde{\sigma}+\overline{\rho})(\widetilde{ w}_{1}+z\widetilde{\eta}_{t}\vec{e_{2}})_{z}}{(1+\widetilde{\eta})},$
			 \begin{equation}\label{eog12}
			 \begin{split}
			 \displaystyle
			 &\left\|\frac{\widetilde{\eta}_{x}(\widetilde{\sigma}+\overline{\rho})(\widetilde{ w}_{1}+z\widetilde{\eta}_{t}\vec{e_{2}})_{z}}{(1+\widetilde{\eta})}\right\|_{L^{1}(0,T;H^{2}(\Omega))}\\[1.mm]
			 &\displaystyle\leqslant K(\|\widetilde{\eta}_{x}\|_{L^{\infty}(0,T;H^{7/2}(\Gamma_{s}))}\left\|\frac{1}{(1+\widetilde{\eta})}\right\|_{L^{\infty}(0,T;H^{9/2}(\Gamma_{s}))}\|(\widetilde{\sigma}+\overline{\rho})\|_{L^{\infty}(0,T;H^{2}(\Omega))}\\
			 &\qquad\|(\widetilde{ w}_{1}+z\widetilde{\eta}_{t}\vec{e_{2}})_{z}\|_{L^{1}(0,T;H^{2}(\Omega))})
			\qquad\qquad\qquad (\mbox{using Lemma}\,\ref{lfpss})\\[1.mm]
			 & \displaystyle\leqslant K(B_{1},B_{3},B_{4})T^{1/2},\quad(\mbox{using H\"{o}lder's inequality and}\,\eqref{tsB12},\eqref{twB3},\eqref{teB4}\,\mbox{and}\,\eqref{eog1*}).
			 \end{split}
			 \end{equation}
			 Rest of the terms in the expression of $F_{1}(\widetilde{\sigma}+\overline{\rho},\widetilde{w}+z\widetilde{\eta}_{t}\vec{e}_{2},\widetilde{\eta})$ can be estimated in a similar way. Hence we can show the following 
			 \begin{equation}\label{eog13}
			 \begin{array}{l}
			 \|F_{1}(\widetilde{\sigma}+\overline{\rho},\widetilde{w}+z\widetilde{\eta}_{t}\vec{e}_{2},\widetilde{\eta})\|_{L^{1}(0,T;H^{2}(\Omega))}\leqslant K(B_{1},B_{3},B_{4})T^{1/2}.
			 \end{array}
			 \end{equation}
			 We combine \eqref{eog11} and \eqref{eog13} to prove \eqref{eg1}$(i).$\\[2.mm]
			 (ii) We will now prove \eqref{eg1}$(ii).$\\
			 {Estimate of $(\widetilde{\sigma}+\overline{\rho})\mbox{div}(\widetilde{\bf w}+z\widetilde{\eta}_{t}\vec{e}_{2})$ in $L^{\infty}(0,T; H^{1}(\Omega))$:}
               We observe the following
               \begin{equation}\label{eog14}
               \begin{split}
               \displaystyle
               &\|((\widetilde{\sigma}+\overline{\rho})\mbox{div}(\widetilde{\bf w}+z\widetilde{\eta}_{t}\vec{e}_{2}))_{t}\|_{L^{2}(0,T; H^{1}(\Omega))}\\[1.mm]
               &\displaystyle\leqslant K(\|\widetilde{\sigma}_{t}\mbox{div}(\widetilde{\bf w}+z\widetilde{\eta}_{t}\vec{e}_{2})\|_{L^{2}(0,T; H^{1}(\Omega))}+\|(\widetilde{\sigma}+\overline{\rho})\mbox{div}(\widetilde{\bf w}_{t}+z\widetilde{\eta}_{tt}\vec{e}_{2})\|_{L^{2}(0,T; H^{1}(\Omega))})\\[1.mm]
               &\displaystyle\leqslant K(\|\widetilde{\sigma}_{t}\|_{L^{\infty}(0,T; H^{1}(\Omega))}\|\mbox{div}(\widetilde{\bf w}+z\widetilde{\eta}_{t}\vec{e}_{2})\|_{L^{2}(0,T;H^{2}(\Omega))}\\[1.mm]
               &\displaystyle+\|\widetilde{\sigma}+\overline{\rho}\|_{L^{\infty}(0,T;H^{2}(\Omega))}\|\mbox{div}(\widetilde{\bf w}_{t}+z\widetilde{\eta}_{tt}\vec{e}_{2})\|_{L^{2}(0,T; H^{1}(\Omega))})\quad(\mbox{using Lemma}\,\ref{lfpss})\\[1.mm]
               & \displaystyle\leqslant K(B_{1},B_{2},B_{3},B_{4}),\quad(\mbox{using}\,\eqref{tsB12},\eqref{twB3},\eqref{teB4}).
               \end{split}
               \end{equation}
               Now apply the inequality \eqref{eoLit} with $\psi=(\widetilde{\sigma}+\overline{\rho})\mbox{div}(\widetilde{\bf w}+z\widetilde{\eta}_{t}\vec{e}_{2}).$ We obtain
               \begin{equation}\label{eog15}
               \begin{split}
               \displaystyle
               &\|(\widetilde{\sigma}+\overline{\rho})\mbox{div}(\widetilde{\bf w}+z\widetilde{\eta}_{t}\vec{e}_{2})\|_{L^{\infty}(0,T; H^{1}(\Omega))}\\[1.mm]
               &\leqslant K\|\rho_{0}\mbox{div}({\bf u}_{0})\|_{ H^{1}(\Omega)}+T^{1/2}K(B_{1},B_{2},B_{3},B_{4}),\quad(\mbox{using}\,\eqref{eog14}).
               \end{split}
               \end{equation}
              	{Estimate of $F_{1}(\widetilde{\sigma}+\overline{\rho},\widetilde{w}+z\widetilde{\eta}_{t}\vec{e}_{2},\widetilde{\eta})$ in $L^{\infty}(0,T; H^{1}(\Omega))$:}
                 We can have the following estimate
                 \begin{equation}\label{eog16}
                 \begin{split}
                 &\displaystyle\left\|\frac{\widetilde{\eta}_{x}(\widetilde{\sigma}+\overline{\rho})(\widetilde{ w}_{1}+z\widetilde{\eta}_{t}\vec{e_{2}})_{z}}{(1+\widetilde{\eta})}\right\|_{L^{\infty}(0,T; H^{1}(\Omega))}\\[1.mm]
                 &\displaystyle\leqslant K(\|\widetilde{\eta}_{x}\|_{L^{\infty}(0,T;H^{2}(\Gamma_{s}))}\left\|\frac{1}{(1+\widetilde{\eta})}\right\|_{L^{\infty}(0,T;H^{9/2}(\Gamma_{s}))}\|(\widetilde{\sigma}+\overline{\rho})\|_{L^{\infty}(0,T;H^{2}(\Omega))}\\
                 &\qquad\|(\widetilde{ w}_{1}+z\widetilde{\eta}_{t}\vec{e_{2}})_{z}\|_{L^{\infty}(0,T; H^{1}(\Omega))})
                 \qquad\qquad (\mbox{using Lemma}\,\ref{lfpss})\\[1.mm]
                 &\displaystyle\leqslant KT^{1/2}(\|\widetilde{\eta}_{xt}\|_{L^{2}(0,T;H^{2}(\Gamma_{s}))}\left\|\frac{1}{(1+\widetilde{\eta})}\right\|_{L^{\infty}(0,T;H^{9/2}(\Gamma_{s}))}\|(\widetilde{\sigma}+\overline{\rho})\|_{L^{\infty}(0,T;H^{2}(\Omega))}\\
                 &\displaystyle\qquad\|(\widetilde{ w}_{1}+z\widetilde{\eta}_{t}\vec{e_{2}})_{z}\|_{L^{\infty}(0,T; H^{1}(\Omega))})
                 \qquad(\mbox{using \eqref{eoLit} with $\psi=\widetilde{\eta}_{x}$ and the fact that $\widetilde{\eta}_{x}(0)=0$})\\[1.mm]
                 &\displaystyle \leqslant K(B_{1},B_{3},B_{4})T^{1/2},\quad(\mbox{using \eqref{tsB12},\eqref{twB3},\eqref{teB4} and \eqref{eog1*}}).
                 \end{split}
                 \end{equation}
                 A similar analysis can be applied to estimate other summands of $F_{1}(\widetilde{\sigma}+\overline{\rho},\widetilde{w}+z\widetilde{\eta}_{t}\vec{e}_{2},\widetilde{\eta}).$ Hence we can now show that
                 \begin{equation}\label{eog17}
                 \begin{array}{l}
                 \|F_{1}(\widetilde{\sigma}+\overline{\rho},\widetilde{w}+z\widetilde{\eta}_{t}\vec{e}_{2},\widetilde{\eta})\|_{L^{\infty}(0,T; H^{1}(\Omega))}\leqslant K(B_{1},B_{2},B_{4})T^{1/2}.
                 \end{array}
                 \end{equation}
                 Combine \eqref{eog15} with \eqref{eog17} to show \eqref{eg1}$(ii).$
				\end{proof}
				\begin{lem}\label{eog2}
				Let $B^{*}_{0}$ and $T^{*}_{0}$ are as in Lemma \ref{nonempty} and $B_{i}\geqslant B^{*}_{0}$ ($\forall\, 1\leqslant i\leqslant 4$). Then there exist $K_{3}=K_{3}(B_{1},B_{2},B_{3},B_{4})>0,$ $K_{4}=K_{4}(B_{1},B_{4})>0$ and $K_{5}>0$ such that for all $0<T\leqslant T^{*}_{0}(B_{1},B_{2},B_{3},B_{4})$ and $(\widetilde{\sigma},\widetilde{\bf w},\widetilde{\eta})\in \mathscr{C}_{T}(B_{1},B_{2},B_{3},B_{4}),$ $G_{2}(\widetilde{\sigma},\widetilde{\bf w},\widetilde{\eta})$ (defined in \eqref{1.22}) satisfies the following estimates  
					\begin{equation}\label{eg2}
					\begin{array}{lll}
					&(i)&\|{G}_{2}(\widetilde{\sigma},\widetilde{\bf w},\widetilde{\eta})\|_{L^{2}(0,T;{\bf H}^{1}(\Omega))}\leqslant K_{3}(B_{1},B_{2},B_{3},B_{4})T^{1/2},\\[1.mm]
					&(ii)&\|({G}_{2}(\widetilde{\sigma},\widetilde{\bf w},\widetilde{\eta}))_{t}\|_{L^{2}(0,T;{\bf L}^{2}(\Omega))}\leqslant K_{3}(B_{1},B_{2},B_{3},B_{4})T^{1/2}+K_{4}(B_{1},B_{4}),\\[1.mm]
					&(iii)&\|{G}_{2}(\widetilde{\sigma},\widetilde{\bf w},\widetilde{\eta})\|_{L^{\infty}(0,T;{\bf L}^{2}(\Omega))}\leqslant K_{5}\|G_{2}^0\|_{{\bf L}^{2}(\Omega)}+ K_{3}(B_{1},B_{2},B_{3},B_{4})T^{1/2}.\\
					\end{array}
					\end{equation}
					\begin{remark}
						The estimates in \eqref{eg2} are inspired from the results stated in \cite[p. 269]{vallizak} which is done in absence of the beam unknown $\eta$ but includes the evolution of the temperature of the fluid.\\
						We further emphasize that the constant $K_{4}$ does not depend on $(B_{2},B_{3})$ and $K_{5}$ does not depend on any of the $B_{i}$ ($1\leqslant i\leqslant 4$).  
					\end{remark}
					\begin{proof}[Proof of Lemma \ref{eog2}]
					    One can use \eqref{tsgmM} to show that for $\gamma> 1,$
						$$(\widetilde{\sigma}+\overline{\rho})^{\gamma-1}\in C^{0}([0,T];H^{2}(\Omega))\,\,\mbox{and}\,\,(\widetilde{\sigma}+\overline{\rho})^{\gamma-2}\in C^{0}([0,T];H^{2}(\Omega))$$  
						and
						\begin{equation}\label{eog2*}
						\left\{ \begin{split}
						\|(\widetilde{\sigma}+\overline{\rho})^{\gamma-1}\|_{L^{\infty}(0,T;H^{2}(\Omega))}\leqslant K\|\widetilde{\sigma}\|_{L^{\infty}(0,T;H^{2}(\Omega))}\leqslant K(B_{1}),\\
						\|(\widetilde{\sigma}+\overline{\rho})^{\gamma-2}\|_{L^{\infty}(0,T;H^{2}(\Omega))}\leqslant K\|\widetilde{\sigma}\|_{L^{\infty}(0,T;H^{2}(\Omega))}\leqslant K(B_{1}).
						\end{split}\right.
						\end{equation}
						 (i)	We first estimate $G_{2}(\widetilde{\sigma},\widetilde{\bf w},\widetilde{\eta})$ in $L^{2}(0,T;{\bf H}^{1}(\Omega)).$\\ 
						{Estimate of $P'(\widetilde{\sigma}+\overline{\rho})\nabla\widetilde{\sigma}$ in $L^{2}(0,T;{\bf H}^{1}(\Omega))$}:\\
							\begin{equation}\label{eog21}
							\begin{split}
							&\|P'(\widetilde{\sigma}+\overline{\rho})\nabla\widetilde{\sigma}\|_{L^{2}(0,T;{\bf H}^{1}(\Omega))}\\[1.mm]
							& \leqslant T^{1/2}\|P'(\widetilde{\sigma}+\overline{\rho})\nabla\widetilde{\sigma}\|_{L^{\infty}(0,T;{\bf H}^{1}(\Omega))}\\[1.mm]
							&\leqslant T^{1/2}K(\|(\widetilde{\sigma}+\overline{\rho})^{\gamma-1}\|_{L^{\infty}(0,T;H^{2}(\Omega))}\|\nabla{\widetilde{\sigma}}\|_{L^{\infty}(0,T;{\bf H}^{1}(\Omega))})\\[1.mm]
							&\qquad\qquad\qquad\qquad\qquad\qquad\qquad\quad(\mbox{using the definition of}\,P\,\mbox{and}\,\mbox{Lemma}\,\ref{lfpss})\\[1.mm]
							& \leqslant K(B_{1})T^{1/2},\quad(\mbox{using}\,\eqref{tsB12}).
							\end{split}
							\end{equation}
							{Estimate of $z\widetilde{\eta}_{tt}(\widetilde{\sigma}+\overline{\rho})\vec{e}_{2}-(\mu \Delta +(\mu+\mu{'})\nabla\mathrm{div})(z\widetilde{\eta}_{t}\vec{e}_{2})$ in $L^{2}(0,T;{\bf H}^{1}(\Omega))$:}\\
							\begin{equation}\label{eog22}
							\begin{split}
							&\|z\widetilde{\eta}_{tt}(\widetilde{\sigma}+\overline{\rho})\vec{e}_{2}-(\mu \Delta +(\mu+\mu{'})\nabla\mathrm{div})(z\widetilde{\eta}_{t}\vec{e}_{2})\|_{L^{2}(0,T;{\bf H}^{1}(\Omega))}\\[1.mm]
							&\leqslant T^{1/2}\|z\widetilde{\eta}_{tt}(\widetilde{\sigma}+\overline{\rho})\vec{e}_{2}-(\mu \Delta +(\mu+\mu{'})\nabla\mathrm{div})(z\widetilde{\eta}_{t}\vec{e}_{2})\|_{L^{\infty}(0,T;{\bf H}^{1}(\Omega))}\\[1.mm]
							&\leqslant T^{1/2}K(\|\widetilde{\eta}_{tt}\|_{L^{\infty}(0,T; H^{1}(\Omega))}\|(\widetilde{\sigma}+\overline{\rho})\|_{L^{\infty}(0,T;H^{2}(\Omega))}+\|\widetilde{\eta}_{t}\|_{L^{\infty}(0,T; {H}^{3}(\Omega))})\\[1.mm]
							&\qquad\qquad\qquad\qquad\qquad\qquad\qquad\qquad\qquad\qquad\qquad\qquad(\mbox{using Lemma}\,\ref{lfpss})\\[1.mm]
							&\leqslant K(B_{1},B_{4})T^{1/2},\quad(\mbox{using}\,\eqref{tsB12},\eqref{teB4}).
							\end{split}
							\end{equation}
							{Estimate of $F_{2}(\widetilde{\sigma}+\overline{\rho},\widetilde{\bf w}+z\widetilde{\eta}_{t}\vec{e}_{2},\widetilde{\eta})$ (defined in \eqref{F123}) in $L^{2}(0,T;{\bf H}^{1}(\Omega))$:}
						    We will only estimate the terms of $F_{2}(\widetilde{\sigma}+\overline{\rho},\widetilde{\bf w}+z\widetilde{\eta}_{t}\vec{e}_{2},\widetilde{\eta})$ which are the most intricate to deal with. The others are left to the reader.
							\begin{equation}\label{eog23}
							\begin{array}{ll}
							(a)&\quad\|\widetilde{\eta}(\widetilde{\sigma}+\overline{\rho})(\widetilde{\bf w}_{t}+z\widetilde{\eta}_{tt}\vec{e_{2}})\|_{L^{2}(0,T;{\bf H}^{1}(\Omega))}\\[1.mm]
							& \leqslant T^{1/2}\|\widetilde{\eta}(\widetilde{\sigma}+\overline{\rho})(\widetilde{\bf w}_{t}+z\widetilde{\eta}_{tt}\vec{e_{2}})\|_{L^{\infty}(0,T;{\bf H}^{1}(\Omega))}\\[1.mm]
							&\leqslant T^{1/2}K(\|\widetilde{\eta}\|_{L^{\infty}(0,T;{H}^{2}(\Gamma_{s}))}\|(\widetilde{\sigma}+\overline{\rho})\|_{L^{\infty}(0,T;{H}^{2}(\Omega))}
							\|(\widetilde{\bf w}_{t}+z\widetilde{\eta}_{tt}\vec{e_{2}})\|_{{L^{\infty}(0,T;{\bf H}^{1}(\Omega))}})\\[1.mm]
							&\qquad\qquad\qquad\qquad\qquad\qquad\qquad\qquad\qquad\qquad\qquad\qquad\qquad\qquad(\mbox{using Lemma}\,\ref{lfpss})\\[1.mm]
							&\leqslant K(B_{1},B_{3},B_{4})T^{1/2},\quad(\mbox{using}\,\eqref{tsB12},\eqref{twB3},\eqref{teB4}).
							\end{array}
							\end{equation}
							\begin{equation}\label{eog24}
							\begin{array}{ll}
							(b)&\quad\|z(\widetilde{\sigma}+\overline{\rho})(\widetilde{\bf w}_{z}+\widetilde{\eta}_{t}\vec{e_{2}})\widetilde{\eta}_{t}\|_{L^{2}(0,T;{\bf H}^{1}(\Omega))}\\[1.mm]
							& \leqslant T^{1/2}\|z(\widetilde{\sigma}+\overline{\rho})(\widetilde{\bf w}_{z}+\widetilde{\eta}_{t}\vec{e_{2}})\widetilde{\eta}_{t}\|_{L^{\infty}(0,T;{\bf H}^{1}(\Omega))}\\[1.mm]
	                        &\leqslant T^{1/2} K(\|(\widetilde{\sigma}+\overline{\rho})\|_{L^{\infty}(0,T;H^{2}(\Omega))}\|(\widetilde{\bf w}_{z}+\widetilde{\eta}_{t}\vec{e_{2}})\|_{L^{\infty}(0,T;{\bf H}^{1}(\Omega))}\|\widetilde{\eta}_{t}\|_{L^{\infty}(0,T;H^{2}(\Gamma_{s}))})\\[1.mm]
	                        &\qquad\qquad\qquad\qquad\qquad\qquad\qquad\qquad\qquad\qquad\qquad\qquad\qquad\qquad(\mbox{using Lemma}\,\ref{lfpss})\\[1.mm]
							&\leqslant  K(B_{1},B_{3},B_{4})T^{1/2},\quad(\mbox{using}\,\eqref{tsB12},\eqref{twB3},\eqref{teB4}).
							\end{array}
							\end{equation}
							\begin{equation}\label{eog25}
							\begin{array}{ll}
							(c) &\displaystyle\quad\left\|\frac{\widetilde{\bf w}_{zz}z^{2}\eta^{2}_{x}}{(1+\widetilde\eta)}\right\|_{L^{2}(0,T;{\bf H}^{1}(\Omega))}\\[1.mm]
							&\displaystyle\leqslant K(\|\widetilde{\bf w}_{zz}\|_{L^{2}(0,T;{\bf H}^{1}(\Omega))}\|\widetilde{\eta}^{2}_{x}\|_{L^{\infty}(0,T;H^{2}(\Gamma_{s}))}\left\|\frac{1}{(1+\widetilde{\eta})}\right\|_{L^{\infty}(0,T;H^{9/2}(\Gamma_{s}))})\\
							&\qquad\qquad\qquad\qquad\qquad\qquad\qquad\qquad\qquad\qquad\qquad\qquad\qquad\qquad\,\,(\mbox{using Lemma}\,\ref{lfpss})\\[1.mm]
							&\displaystyle\leqslant T^{1/2}K(\|\widetilde{\bf w}_{zz}\|_{L^{2}(0,T;{\bf H}^{1}(\Omega))}\|\widetilde{\eta}^{2}_{xt}\|_{L^{2}(0,T;H^{2}(\Gamma_{s}))}\left\|\frac{1}{(1+\widetilde{\eta})}\right\|_{L^{\infty}(0,T;H^{9/2}(\Gamma_{s}))})\\[1.mm]
							&\displaystyle\qquad\qquad\qquad\qquad\qquad\qquad(\mbox{using}\,\eqref{eoLit}\,\mbox{with}\,\psi=\widetilde{\eta}^{2}_{x}\,\mbox{and the fact}\,\widetilde{\eta}_{x}(,0)=0)\\[1.mm]
							&\displaystyle\leqslant K(B_{3},B_{4})T^{1/2},\quad(\mbox{using}\,\eqref{tsB12},\eqref{twB3},\eqref{teB4}\,\mbox{and}\,\eqref{eog1*}).
							\end{array}
							\end{equation}
							 \,\,\,\,\,\,\quad$(d)$\quad Using arguments similar to that in the computation \eqref{eog21} we show the following
							\begin{equation}\label{eog26}
							\begin{split}
							\|(\widetilde{\eta} P'\widetilde{\sigma}_{x}- P'\widetilde{\sigma}_{z}z\widetilde{\eta}_{x})\vec{e_{1}}\|_{L^{2}(0,T; {\bf H}^{1}(\Omega))}
							\leqslant K(B_{1},B_{4})T^{1/2}.
							\end{split}
							\end{equation}
							Now the reader can deal with the other terms using similar arguments in order to prove
							\begin{equation}\label{eog27}
							\begin{array}{l}
							\|{F}_{2}(\widetilde{\sigma}+\overline{\rho},(\widetilde{\bf{w}}+z\widetilde\eta_{t}\vec{e_{2}}),\widetilde{\eta})\|_{L^{2}(0,T;{\bf H}^{1}(\Omega))}\leqslant K(B_{1},B_{3},B_{4})T^{1/2}.
							\end{array}
							\end{equation}
							Combining the estimates \eqref{eog21}, \eqref{eog22} and \eqref{eog27} we conclude the proof of the inequality \eqref{eg2}$(i)$.\\[2.mm]
							(ii)
							We now estimate  $\|(G_{2}(\widetilde{\sigma},\widetilde{\bf w},\widetilde{\eta}))_{t}\|_{L^{2}(0,T;{\bf L}^{2}(\Omega))}.$\\
							{Estimate of $(P'(\widetilde{\sigma}+\overline{\rho})\nabla\widetilde{\sigma})_{t}$ in $L^{2}(0,T;{\bf L}^{2}(\Omega))$:}
							\begin{equation}\label{eog28}
							\begin{split}
						    &\|(P'(\widetilde{\sigma}+\overline{\rho})\nabla\widetilde{\sigma})_{t}\|_{L^{2}(0,T;{\bf L}^{2}(\Omega))}\\[1.mm]
						    &\leqslant T^{1/2}\|(P'(\widetilde{\sigma}+\overline{\rho})\nabla\widetilde{\sigma})_{t}\|_{L^{\infty}(0,T;{\bf L}^{2}(\Omega))}\\[1.mm]
						    &\leqslant T^{1/2}K(\|(\widetilde{\sigma}+\overline{\rho})^{(\gamma-2)}\widetilde{\sigma}_{t}\nabla{\widetilde\sigma}\|_{L^{\infty}(0,T;{\bf L}^{2}(\Omega))}+\|(\widetilde{\sigma}+\overline{\rho})^{\gamma-1}\nabla\widetilde{\sigma}_{t}\|_{L^{\infty}(0,T;{\bf L}^{2}(\Omega))})\\[1.mm]
						    &\leqslant T^{1/2}K(\|(\widetilde{\sigma}+\overline{\rho})^{\gamma-2}\|_{L^{\infty}(0,T;H^{2}(\Omega))}\|\widetilde{\sigma}_{t}\|_{L^{\infty}(0,T; H^{1}(\Omega))}\|\nabla\widetilde{\sigma}\|_{L^{\infty}(0,T; {\bf H}^{1}(\Omega))}\\[1.mm]
						    &\qquad+\|(\widetilde{\sigma}+\overline{\rho})^{\gamma-1}\|_{L^{\infty}(0,T;H^{2}(\Omega))}\|\nabla\widetilde{\sigma}_{t}\|_{L^{\infty}(0,T;{\bf L}^{2}(\Omega))})
						   \qquad(\mbox{using Lemma}\,\ref{lfpss})\\[1.mm]
						    &\leqslant K(B_{1},B_{2})T^{1/2},\quad(\mbox{using}\,\eqref{tsB12}\,\mbox{and}\,\eqref{eog2*}).
							\end{split}
							\end{equation}
							{Estimate of $(z\widetilde{\eta}_{tt}(\widetilde{\sigma}+\overline{\rho})\vec{e_{2}}-(\mu \Delta +(\mu+\mu{'})\nabla\mathrm{div})(z\widetilde{\eta}_{t}\vec{e}_{2}))_{t}$ in $L^{2}(0,T;{\bf L}^{2}(\Omega))$:}\\
							\begin{equation}\label{eog29}
							\begin{split}
							&\|(z\widetilde{\eta}_{tt}(\widetilde{\sigma}+\overline{\rho})\vec{e_{2}}-(\mu \Delta +(\mu+\mu{'})\nabla\mathrm{div})(z\widetilde{\eta}_{t}\vec{e}_{2}))_{t}\|_{L^{2}(0,T;{\bf L}^{2}(\Omega))}\\[1.mm]
							&\leqslant T^{1/2}K(\|\widetilde{\eta}_{tt}\|_{L^{\infty}(0,T; H^{1}(\Gamma_{s}))}\|\widetilde{\sigma}_{t}\|_{L^{\infty}(0,T; H^{1}(\Omega))})
							 +K(\|\widetilde{\eta}_{ttt}\|_{L^{2}(0,T;L^{2}(\Gamma_{s}))}\|(\widetilde{\sigma}+\overline{\rho})\|_{L^{\infty}(0,T;H^{2}(\Omega))}\\[1.mm]
							 & \qquad+\|\widetilde{\eta}_{tt}\|_{L^{2}(0,T;H^{2}(\Gamma_{s}))})
							 \qquad(\mbox{using Lemma}\,\ref{lfpss})\\[1.mm]
							 & \leqslant K(B_{2},B_{4})T^{1/2}+K(B_{1},B_{4}).
							\end{split}
							\end{equation}
							{Estimate of $({F}_{2}(\widetilde{\sigma}+\overline{\rho},(\widetilde{\bf w}+z\widetilde\eta_{t}\vec{e_{2}}),\widetilde{\eta}))_{t}$ in $L^{2}(0,T;{\bf L}^{2}(\Omega))$:}\\
						\begin{equation}\label{eog210}
							\begin{array}{ll}
							(a)\quad&\|(\widetilde{\eta}(\widetilde{\sigma}+\overline{\rho})(\widetilde{\bf w}_{t}+z\widetilde{\eta}_{tt}\vec{e_{2}}))_{t}\|_{L^{2}(0,T;{\bf L}^{2}(\Omega))}\\[1.mm]
							&\leqslant K(\|(\widetilde{\eta}_{t}(\widetilde{\sigma}+\overline{\rho})(\widetilde{\bf w}_{t}+z\widetilde{\eta}_{tt}\vec{e_{2}}))\|_{L^{2}(0,T;{\bf L}^{2}(\Omega))}
							+\|\widetilde{\eta}\widetilde{\sigma}_{t}(\widetilde{\bf w}_{t}+z\widetilde{\eta}_{tt}\vec{e_{2}})\|_{L^{2}(0,T;{\bf L}^{2}(\Omega))}\\[1.mm]
							 &\qquad+\|(\widetilde{\eta}(\widetilde{\sigma}+\overline{\rho})(\widetilde{\bf w}_{tt}+z\widetilde{\eta}_{ttt}\vec{e_{2}}))\|_{L^{2}(0,T;{\bf L}^{2}(\Omega))})\\[1.mm]
							 &\leqslant T^{1/2}K(\|\widetilde{\eta}_{t}\|_{L^{\infty}(0,T;H^{2}(\Gamma_{s}))}\|(\widetilde{\sigma}+\overline{\rho})\|_{L^{\infty}(0,T;H^{2}(\Omega))}\|(\widetilde{\bf w}_{t}+z\widetilde{\eta}_{tt}\vec{e_{2}})\|_{L^{\infty}(0,T;{\bf H}^{1}(\Omega))}\\[1.mm]
							 &\qquad+\|\widetilde{\eta}\|_{L^{\infty}(0,T;H^{2}(\Gamma_{s}))}\|\widetilde{\sigma}_{t}\|_{L^{\infty}(0,T; H^{1}(\Omega))}\|(\widetilde{\bf w}_{t}+z\widetilde{\eta}_{tt}\vec{e_{2}})\|_{L^{\infty}(0,T;{\bf H}^{1}(\Omega))})\\[1.mm]
							 &\qquad+\|\widetilde{\eta}\|_{L^{\infty}(0,T;H^{2}(\Gamma_{s}))}\|(\widetilde{\sigma}+\overline{\rho})\|_{L^{\infty}(0,T;H^{2}(\Omega))}\|(\widetilde{\bf w}_{tt}+z\widetilde{\eta}_{ttt}\vec{e_{2}})\|_{L^{2}(0,T;{\bf L}^{2}(\Omega))}\\[1.mm]
							 & \leqslant T^{1/2}K(B_{1},B_{2},B_{3},B_{4})+T^{1/2}\|\widetilde{\eta}_{t}\|_{L^{2}(0,T;H^{2}(\Gamma_{s}))}\|(\widetilde{\sigma}+\overline{\rho})\|_{L^{\infty}(0,T;H^{2}(\Omega))}\\
							 &\qquad\|(\widetilde{\bf w}_{tt}+z\widetilde{\eta}_{ttt}\vec{e_{2}})\|_{L^{2}(0,T;{\bf L}^{2}(\Omega))}
							 \qquad(\mbox{using }\,\eqref{eoLit}\,\mbox{with}\,\psi=\widetilde{\eta}\,\mbox{and the fact}\,\widetilde{\eta}(,0)=0)\\[1.mm]
							& \leqslant K(B_{1},B_{2},B_{3},B_{4})T^{1/2}.
							\end{array}
							\end{equation}
							\,\,\,\,\,\,\,$(b)$\quad Using similar estimates we can have the following
							\begin{equation}\label{eog211}
							\begin{array}{l}
							\|(z(\widetilde{\sigma}+\overline{\rho})(\widetilde{\bf w}_{z}+\widetilde{\eta}_{t}\vec{e_{2}})\eta_{t})_{t}\|_{L^{2}(0,T;{\bf L}^{2}(\Omega))}\leqslant K(B_{1},B_{2},B_{3},B_{4})T^{1/2}.
							\end{array}
							\end{equation}
							\,\,\,\,\,\,\,$(c)$\quad Now we estimate 
							$$	\left\|\left(\frac{\widetilde{\bf w}_{zz}z^{2}\widetilde{\eta}^{2}_{x}}{(1+\widetilde{\eta})}\right)_{t}\right\|_{L^{2}(0,T;{\bf L}^{2}(\Omega))}.$$
							To start with, we have the following identity of distributional derivatives
							\begin{equation}
								\label{HugeTerms}
								\left(\frac{\widetilde{\bf w}_{zz}z^{2}\widetilde{\eta}^{2}_{x}}{(1+\widetilde{\eta})}\right)_{t}=\frac{z^{2}\widetilde{\bf w}_{tzz}\widetilde{\eta}^{2}_{x}}{(1+\widetilde{\eta})}+\frac{2\widetilde{\eta}_{x}\widetilde{\eta}_{xt}\widetilde{\bf w}_{zz}}{(1+\widetilde{\eta})}-\frac{\widetilde{\bf w}_{zz}z^{2}\widetilde{\eta}^{2}_{x}\widetilde{\eta}_{t}}{(1+\widetilde{\eta})^{2}}.
							\end{equation}
We now estimate the first term of the summands. Using \eqref{eog1*} one obtains
							\begin{equation}\label{eog212}
							\begin{array}{l}
							\displaystyle
							\left\|\frac{z^{2}\widetilde{\bf w}_{tzz}\widetilde{\eta}^{2}_{x}}{(1+\widetilde{\eta})}\right\|_{L^{2}(0,T;{\bf L}^{2}(\Omega))}\leqslant K(B_{4})(\|\widetilde{\bf w}_{tzz}\|_{{L^{2}(0,T;{\bf L}^2(\Omega))}}\|\widetilde{\eta}_{x}\|_{{{L^{\infty}(\Sigma^{s}_{T})}}}^2).
							\end{array}
							\end{equation}
							Now we use inequality \eqref{eoLit} and $\widetilde{\eta}_{x}(\cdot,0)=0$ to get
							\begin{equation}\label{eog213}
							\begin{array}{l}
							\displaystyle
							\|\widetilde{\eta}_{x}\|_{{{L^{\infty}(\Sigma^{s}_{T})}}}
							\leq
							C \|\widetilde{\eta}_{x}\|_{L^{\infty}(0,T;{ H}^{2}(\Gamma_{s}))}\leqslant K(B_{3})T^{1/2}.
							\end{array}
							\end{equation}
							Hence we use \eqref{eog213} in \eqref{eog212} to obtain
							\begin{equation}\label{eog214}
							\begin{array}{l}
							\displaystyle
							\left\|\frac{z^{2}\widetilde{\bf w}_{tzz}\widetilde{\eta}^{2}_{x}}{(1+\widetilde{\eta})}\right\|_{L^{2}(0,T;{\bf L}^{2}(\Omega))}\leqslant K(B_{3},B_{4})T.
							\end{array}
							\end{equation} 
							For the second and third summands of \eqref{HugeTerms}, we similarly obtain:
							\begin{equation}
							\begin{array}{l}
							\displaystyle
							\left\|\frac{2\widetilde{\eta}_{x}\widetilde{\eta}_{xt}\widetilde{\bf w}_{zz}}{(1+\widetilde{\eta})}\right\|_{L^{2}(0,T;{\bf L}^{2}(\Omega))}\leqslant K(B_{3},B_{4})T^{1/2},  \text{ and } 
							\left\|-\frac{\widetilde{\bf w}_{zz}z^{2}\widetilde{\eta}^{2}_{x}\widetilde{\eta}_{t}}{(1+\widetilde{\eta})^{2}} \right\|_{L^{2}(0,T;{\bf L}^{2}(\Omega))}\leqslant K(B_{3},B_{4})T.
							\end{array}
							\end{equation} 

							So altogether we get
							\begin{equation}\label{eog215}
							\begin{array}{l}
							\displaystyle
							\left\|\left(\frac{\widetilde{\bf w}_{zz}z^{2}\widetilde\eta^{2}_{x}}{(1+\widetilde\eta)}\right)_{t}\right\|_{L^{2}(0,T;{\bf L}^{2}(\Omega))}\leqslant K(B_{3},B_{4})T^{1/2}.
							\end{array}
							\end{equation}
							The remaining terms in the expression of ${F}_{2}$ are relatively easier to deal with and hence we leave the details to the reader to show
							\begin{equation}\label{eog216}
							\begin{array}{l}
							\|({F}_{2}(\widetilde{\sigma}+\overline{\rho},\widetilde{\bf w}+z\widetilde\eta_{t}\vec{e_{2}},\widetilde{\eta}))_{t}\|_{L^{2}(0,T;{\bf L}^{2}(\Omega))}\leqslant K(B_{1},B_{2},B_{3},B_{4})T^{1/2}.
							\end{array}
							\end{equation}
							Hence combining the estimates \eqref{eog28}, \eqref{eog29} and \eqref{eog216} one gets \eqref{eg2}$(ii).$\\[1.mm]
							(iii) In \eqref{eoLit} replace $\psi$ by $G_{2}(\widetilde{\bf w},\widetilde{\sigma},\widetilde{\eta})$ and use the estimate \eqref{eg2}$(ii)$ to prove \eqref{eg2}$(iii).$
					\end{proof}
				\end{lem}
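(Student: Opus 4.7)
The plan is to unpack the definition of $G_{2}(\widetilde{\sigma},\widetilde{\bf w},\widetilde{\eta})$ from \eqref{1.22} into three structurally different pieces: the pressure gradient $-P'(\widetilde{\sigma}+\overline{\rho})\nabla\widetilde{\sigma}$; the ``forced'' terms coming from the lifting of the boundary condition, namely $-z\widetilde{\eta}_{tt}(\widetilde{\sigma}+\overline{\rho})\vec{e}_{2}$ and $-(-\mu\Delta-(\mu+\mu')\nabla\mathrm{div})(z\widetilde{\eta}_{t}\vec{e}_{2})$; and the change-of-variables error $F_{2}(\widetilde{\sigma}+\overline{\rho},\widetilde{\bf w}+z\widetilde{\eta}_{t}\vec{e}_{2},\widetilde{\eta})$ listed in \eqref{F123}. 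For each piece the strategy is the same: use the Sobolev product rule (Lemma \ref{lfpss}), use that $1/(1+\widetilde{\eta})\in L^{\infty}(0,T;H^{9/2}(\Gamma_{s}))$ with norm controlled by $B_{4}$ (because of \eqref{1etad0} and \eqref{teB4}), use \eqref{eog2*} to dispose of the nonlinear dependence $P'(\widetilde{\sigma}+\overline{\rho})=a\gamma(\widetilde{\sigma}+\overline{\rho})^{\gamma-1}$, and extract smallness in $T$ either by H\"older's inequality in time (trading $L^{\infty}_{t}$ against $L^{2}_{t}$ costs a factor $T^{1/2}$) or by Lemma \ref{futcl} applied to a factor such as $\widetilde{\eta}$ or $\widetilde{\eta}_{x}$, both of which vanish at $t=0$ by \eqref{tswe0}.

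For part $(i)$, each summand is first bounded in $L^{\infty}(0,T;{\bf H}^{1}(\Omega))$ by a constant depending on the $B_{i}$'s (via the product rule and the norm bounds \eqref{tsB12}--\eqref{teB4}), and then the $L^{2}_{t}$-norm is obtained at the cost of the factor $T^{1/2}$. Terms containing an explicit factor $\widetilde{\eta}^{k}$ or $\widetilde{\eta}_{x}^{k}$ (for instance $\widetilde{\bf w}_{zz}z^{2}\widetilde{\eta}_{x}^{2}/(1+\widetilde{\eta})$, or $\eta P'\widetilde{\sigma}_{x}-P'\widetilde{\sigma}_{z}z\widetilde{\eta}_{x}$) give an \emph{additional} $T^{1/2}$ via \eqref{eoLit}, which is convenient but not needed for this first bound. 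This proves \eqref{eg2}$(i)$.

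Part $(ii)$ is the delicate step. We expand $\partial_{t}G_{2}$ by the product rule and estimate each resulting term in $L^{2}(0,T;{\bf L}^{2}(\Omega))$. Differentiating the pressure term produces $(\widetilde{\sigma}+\overline{\rho})^{\gamma-2}\widetilde{\sigma}_{t}\nabla\widetilde{\sigma}$ and $(\widetilde{\sigma}+\overline{\rho})^{\gamma-1}\nabla\widetilde{\sigma}_{t}$, both of which sit in $L^{\infty}_{t}{\bf L}^{2}$ by \eqref{tsB12} and \eqref{eog2*}, yielding a $T^{1/2}K(B_{1},B_{2})$ bound. In $F_{2}$, the same mechanism works because every summand contains either a factor $\widetilde{\eta}$, $\widetilde{\eta}_{x}$, or $\widetilde{\eta}_{xx}$ vanishing at $t=0$, so Lemma \ref{futcl} supplies the smallness; even the worst summands containing $\widetilde{\bf w}_{tt}$ or $\widetilde{\eta}_{ttt}$ are tamed because the paired factor $\widetilde{\eta}$ or $\widetilde{\eta}_{x}$ is only $O(T^{1/2})$ in $L^{\infty}$. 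The genuine obstruction is the term $\partial_{t}(z\widetilde{\eta}_{tt}(\widetilde{\sigma}+\overline{\rho})\vec{e}_{2})$: one summand is $z\widetilde{\eta}_{ttt}(\widetilde{\sigma}+\overline{\rho})\vec{e}_{2}$, and since $\widetilde{\eta}_{ttt}$ is controlled only in $L^{2}(0,T;L^{2}(\Gamma_{s}))$ by $B_{4}$ (not in $L^{\infty}_{t}$), one cannot extract a factor $T^{1/2}$. This forces the appearance of the ``large'' constant $K_{4}(B_{1},B_{4})$ in \eqref{eg2}$(ii)$; the analogous viscous summand $-(-\mu\Delta-(\mu+\mu')\nabla\mathrm{div})(z\widetilde{\eta}_{tt}\vec{e}_{2})$ in $\partial_{t}G_{2}$ is treated the same way. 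All remaining summands contribute $K_{3}(\cdot)T^{1/2}$, and combining these gives \eqref{eg2}$(ii)$.

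Part $(iii)$ is then essentially free: apply Lemma \ref{futcl} with $\psi=G_{2}(\widetilde{\sigma},\widetilde{\bf w},\widetilde{\eta})$ and use $G_{2}|_{t=0}=G_{2}^{0}$ (guaranteed by \eqref{tsweY}--\eqref{etatt0*} through the identities \eqref{G123}). Together with the bound on $\partial_{t}G_{2}$ from $(ii)$, this yields the $L^{\infty}(0,T;{\bf L}^{2}(\Omega))$ estimate in \eqref{eg2}$(iii)$, with the constant $K_{5}$ coming solely from the universal constant of Lemma \ref{futcl} and therefore independent of the $B_{i}$'s. The principal difficulty of the whole argument is the careful bookkeeping in $(ii)$, in particular isolating those summands of $\partial_{t}F_{2}$ and $\partial_{t}(z\widetilde{\eta}_{tt}(\widetilde{\sigma}+\overline{\rho})\vec{e}_{2})$ for which no factor vanishing at $t=0$ is available; everything else is structurally the same Lemma \ref{lfpss}/Lemma \ref{futcl} routine.
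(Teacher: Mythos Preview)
Your proposal is correct and follows essentially the same approach as the paper: the same three-part decomposition of $G_{2}$, the same use of Lemma~\ref{lfpss} and Lemma~\ref{futcl} to extract $T^{1/2}$-smallness, the correct identification of the $z\widetilde{\eta}_{ttt}(\widetilde{\sigma}+\overline{\rho})\vec{e}_{2}$ and $(-\mu\Delta-(\mu+\mu')\nabla\mathrm{div})(z\widetilde{\eta}_{tt}\vec{e}_{2})$ terms as the source of the non-small constant $K_{4}(B_{1},B_{4})$, and the same derivation of $(iii)$ from $(ii)$ via Lemma~\ref{futcl}. One small caveat: not every summand of $F_{2}$ carries a factor $\widetilde{\eta}$, $\widetilde{\eta}_{x}$ or $\widetilde{\eta}_{xx}$ vanishing at $t=0$ (e.g.\ $z\widehat{\rho}\widehat{u}_{z}\eta_{t}$ and $-\widehat{\rho}(\widehat{u}\cdot\nabla)\widehat{u}$ do not), but their time derivatives still land in $L^{\infty}_{t}{\bf L}^{2}$ and hence pick up $T^{1/2}$ via H\"older, which is the other mechanism you already listed.
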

				\begin{lem}\label{eog3}
							Let $B^{*}_{0}$ and $T^{*}_{0}$ are as in Lemma \ref{nonempty} and $B_{i}\geqslant B^{*}_{0}$ ($\forall\, 1\leqslant i\leqslant 4$). Then there exist $K_{6}>0$ and $K_{7}=K_{7}(B_{1},B_{2},B_{3},B_{4})>0$ such that for all $0<T\leqslant T^{*}_{0}(B_{1},B_{2},B_{3},B_{4})$ and $(\widetilde{\sigma},\widetilde{\bf w},\widetilde{\eta})\in \mathscr{C}_{T}(B_{1},B_{2},B_{3},B_{4}),$ $G_{3}(\widetilde{\sigma},\widetilde{\bf w},\widetilde{\eta})$ (defined in \eqref{1.22}) satisfies the following estimates 
						\begin{equation}\label{eg3}
						\begin{array}{ll}
						(i)& \|G_{3}(\widetilde{\sigma},\widetilde{\bf w},\widetilde{\eta})\|_{L^{\infty}(0,T;H^{1/2}(\Gamma_{s}))}\leqslant K_{6}\|(\rho_{0},{\bf u}_{0})\|_{H^{2}(\Omega)\times {\bf H}^{2}(\Omega)}+K_{7}(B_{1},B_{2},B_{3},B_{4})T^{1/2},\\[1.mm]
						(ii)& \|(G_{3}(\widetilde{\sigma},\widetilde{\bf w},\widetilde{\eta}))_{t}\|_{L^{2}(0,T;L^{2}(\Gamma_{s}))}\leqslant T^{1/6}K_{7}(B_{1},B_{2},B_{3},B_{4}).
						\end{array}
						\end{equation}
				\end{lem}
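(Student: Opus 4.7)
The plan is to estimate $G_3 = F_3(\widetilde\sigma+\overline\rho,\, \widetilde{\bf w}+z\widetilde\eta_t\vec{e_2},\, \widetilde\eta)$ term by term, mimicking the strategy of Lemmas \ref{eog1} and \ref{eog2}. Each summand of $F_3$ (see \eqref{F123}) is a product of a trace on $\Gamma_s$ of one of $\widehat{u}_{i,z}$, $\widehat{u}_{i,x}$, or $P(\widetilde\sigma+\overline\rho)$ (with $\widehat{\bf u}=\widetilde{\bf w}+z\widetilde\eta_t\vec{e_2}$) with a factor built from $\widetilde\eta$, $\widetilde\eta_x$, and $(1+\widetilde\eta)^{-1}$. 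Two tools drive the argument: $(a)$ Lemma \ref{futcl}, which extracts a $T^{1/2}$ factor from any quantity vanishing at $t=0$, and in particular from $\widetilde\eta$ and $\widetilde\eta_x$; and $(b)$ Lemma \ref{leoin}, which provides the trace estimate with a $T^{1/6}$ factor and is crucial for the time-differentiated version.

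For $(i)$, I would split $G_3 = G_3\mid_{t=0} + (G_3-G_3\mid_{t=0})$. The value $G_3\mid_{t=0}$ is a polynomial function of $(u_0)_{2,z}$ and $\rho_0$ by \eqref{G30-bis}, so the trace theorem applied to ${\bf u}_0 \in {\bf H}^2(\Omega)$ and $\rho_0 \in H^2(\Omega)$, combined with Lemma \ref{lfpss} for the pressure, yields $\|G_3\mid_{t=0}\|_{H^{1/2}(\Gamma_s)} \leqslant K_6 \|(\rho_0,{\bf u}_0)\|_{H^2\times {\bf H}^2}$. For the remainder, all summands of $F_3$ except the three leading ones $\mu\widehat{u}_{2,z}$, $2\mu'\widehat{u}_{2,z}$, and $P(\widetilde\sigma+\overline\rho)$ contain a factor of $\widetilde\eta$ or $\widetilde\eta_x$; applying \eqref{eoLit} to these factors (which vanish at $t=0$) provides the $T^{1/2}$ smallness, the remaining factors being bounded in $L^\infty(0,T;H^{1/2}(\Gamma_s))$ by the trace theorem and Lemma \ref{lfpss} with constants depending on $B_1,B_3,B_4$ (using $\widetilde{\bf w} \in L^\infty(0,T;{\bf H}^{5/2}(\Omega))$ by interpolation). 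The three leading summands are treated by writing $\psi - \psi\mid_{t=0}$ and applying \eqref{eoLit} in $H^{1/2}(\Gamma_s)$: the required $L^2$-in-time estimates of $\psi_t$ follow from $\widetilde{\bf w}_t \in L^2(0,T;{\bf H}^2(\Omega))$, $\widetilde\sigma_t \in L^\infty(0,T;H^1(\Omega))$ and the trace theorem.

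For $(ii)$, differentiate $G_3$ in time and estimate each resulting term in $L^2(0,T;L^2(\Gamma_s))$. The critical term is $\widetilde{\bf w}_{tz}\mid_{\Gamma_s}$, arising after $\partial_t$ from $\widehat{u}_{2,z}$ (and from similar factors multiplied by vanishing $\widetilde\eta$-terms). Since \eqref{tweq0} implies $\widetilde{\bf w}_t = 0$ on $\Sigma_T$, one has $\widetilde{\bf w}_t \in {\bf H}^{2,1}_{\Sigma_T}(Q_T)$, and Lemma \ref{leoin} applied to $\widetilde{\bf w}_t$ gives
$$\|(\widetilde{\bf w}_t)_z\mid_{\Sigma_T}\|_{L^2(0,T;{\bf L}^2(\Gamma_s))} \leqslant T^{1/6} K\bigl(\|\widetilde{\bf w}_t(\cdot,0)\|_{{\bf H}^1_0(\Omega)} + \|\widetilde{\bf w}_t\|_{{\bf H}^{2,1}_{\Sigma_T}(Q_T)}\bigr),$$
with $\widetilde{\bf w}_t(\cdot,0) \in {\bf H}^1_0(\Omega)$ thanks to \eqref{twt0} and \eqref{asm2}, and the right-hand side bounded in terms of $B_3$ and an initial-data constant. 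The other contributions to $(G_3)_t$ produce either $\widetilde\eta_{tt}$ (in $L^\infty(0,T;H^1(\Gamma_s))$, contributing a $T^{1/2}$ factor by H\"older in time) or products of $\widetilde\eta_t,\widetilde\eta_{xt},\widetilde\sigma_t$ with traces of $\widetilde{\bf w}$ or its first spatial derivatives, which are handled by routine applications of Lemma \ref{lfpss}, H\"older in time, and \eqref{eoLit} whenever a factor vanishes at $t=0$. Since $T^{1/2} \leqslant T^{1/6}$ for $T\leqslant 1$, the combined bound is of the form $T^{1/6} K_7(B_1,B_2,B_3,B_4)$, as claimed.

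The main obstacle is the trace estimate on $\widetilde{\bf w}_{tz}\mid_{\Gamma_s}$ in $(ii)$: Lemma \ref{leoin} was introduced precisely to handle it, and the exponent $1/6$ in the resulting bound is dictated by that lemma. Everything else is a bookkeeping exercise analogous to the proofs of Lemmas \ref{eog1} and \ref{eog2}, in which carefully identifying the factors vanishing at $t=0$ is what produces the powers of $T$.
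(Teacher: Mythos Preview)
Your proposal is correct and follows essentially the same approach as the paper: for part $(i)$ the paper applies Lemma~\ref{futcl} term by term (which is exactly your ``split $G_3=G_3\mid_{t=0}+(G_3-G_3\mid_{t=0})$'' reformulation), and for part $(ii)$ the paper singles out the trace $\widetilde{\bf w}_{tz}\mid_{\Gamma_s}$ and handles it via Lemma~\ref{leoin} applied to $\widetilde{\bf w}_t$, just as you describe. One minor caution: for the remaining factors in $(i)$ you invoke $\widetilde{\bf w}\in L^\infty(0,T;{\bf H}^{5/2}(\Omega))$ by interpolation, but the interpolation constant may depend on $T$; the paper instead uses directly $\widetilde{\bf w}\in L^\infty(0,T;{\bf H}^{2}(\Omega))$ from \eqref{twB3}, which already suffices to put $\widetilde{\bf w}_z\mid_{\Gamma_s}$ in $L^\infty(0,T;H^{1/2}(\Gamma_s))$.
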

				\begin{remark}
					We emphasize that $K_{6}$ does not depend on any of the $B_{i}$ ($1\leqslant i\leqslant 4$).
				\end{remark}
				\begin{proof}
					In this proof we will consider the function $\widetilde{\bf w}$ and $(\widetilde{\sigma}+\overline{\rho})$ on $\Gamma_{s},$ i.e we take the trace of these functions and make use of well known trace theorem without mentioning it explicitly.\\[1.mm]
					(i) {Estimate of $F_{3}(\widetilde{\sigma}+\overline{\rho},\widetilde{\bf w}+\widetilde{\eta}_{t}\vec{e}_{2},\widetilde{\eta})$ (defined in \eqref{F123}) in $L^{\infty}(0,T;H^{1/2}(\Gamma_{s}))$:}\\
					(a)\,\,{First let us estimate  $(\widetilde{w}_{2,z}+\widetilde{\eta}_{t})$ in $L^{\infty}(0,T;H^{1/2}(\Gamma_{s}))$:}
					\begin{equation}\label{eog32}
					\begin{array}{ll}
					\displaystyle
					&\|(\widetilde{ w}_{2,z}+\widetilde{\eta}_{t})\|_{L^{\infty}(0,T;H^{1/2}(\Gamma_{s}))}\leqslant K(\|{\bf u}_{0}\|_{H^{2}(\Omega)}+T^{1/2}\|(\widetilde{ w}_{2,z}+\widetilde{\eta}_{t})_{t}\|_{L^{2}(0,T;H^{1/2}(\Gamma_{s}))})\\
					&\qquad\qquad\qquad\qquad\qquad\qquad\qquad\qquad\qquad\qquad\qquad\qquad\qquad\qquad(\mbox{using}\,\eqref{eoLit})\\[1.mm]
					&\qquad\qquad\qquad\qquad\,\leqslant K(\|{\bf u}_{0}\|_{H^{2}(\Omega)}+T^{1/2}K(B_{3},B_{4})),\qquad(\mbox{using}\,\eqref{twB3}\,\mbox{and}\,\eqref{teB4}).
					\end{array}
					\end{equation}
					 (b)\,\,Let us estimate $P(\widetilde{\sigma}+\overline{\rho})$ in $L^{\infty}(0,T;H^{1/2}(\Gamma_{s})).$
					\begin{equation}\label{eog31}
					\begin{array}{ll}
					&\|P(\widetilde{\sigma}+\overline{\rho})\|_{L^{\infty}(0,T;H^{1/2}(\Gamma_{s}))}\leqslant K(\|\rho_{0}\|_{H^{2}(\Omega)}+T^{1/2}\|(\widetilde{\sigma}+\overline{\rho})^{\gamma-1}\widetilde{\sigma}_{t}\|_{L^{2}(0,T;H^{1/2}(\Gamma_{s}))}ds)\\
					&\qquad\qquad\qquad\qquad\qquad\qquad\qquad\qquad\qquad\qquad\qquad\qquad\qquad\qquad\qquad\qquad (\mbox{using}\,\eqref{eoLit} )\\[1.mm]
					&\qquad\qquad\qquad\qquad\qquad\qquad\leqslant K(\|\rho_{0}\|_{H^{2}(\Omega)}+T^{1/2}K(B_{1},B_{2})),\qquad(\mbox{using}\,\eqref{tsB12}).
					\end{array}
					\end{equation}
			(c)\,\,Now we estimate $\displaystyle\frac{\widetilde{\eta}_{x}\widetilde{ w}_{1,z}}{(1+\widetilde{\eta})}$ in $L^{\infty}(0,T;H^{1/2}(\Gamma_{s})),$
					\begin{equation}\label{eog33}
					\begin{array}{ll}
					&\displaystyle\|\frac{\widetilde{\eta}_{x}\widetilde{ w}_{1,z}}{(1+\widetilde{\eta})}\|_{L^{\infty}(0,T;H^{1/2}(\Gamma_{s}))}\leqslant K(B_{4})(\|\widetilde{\eta}_{x}\|_{L^{\infty}(0,T;H^{2}(\Gamma_{s}))}\|\widetilde{ w}_{1,z}\|_{L^{\infty}(0,T;H^{1/2}(\Gamma_{s}))})\\
					&\displaystyle\qquad\qquad\qquad\qquad\qquad\qquad\qquad\qquad\qquad\qquad\qquad\qquad\qquad\qquad\qquad(\mbox{using}\,\eqref{eog1*})\\[1.mm]
					&\displaystyle \qquad\qquad\qquad\qquad\qquad\quad\leqslant T^{1/2}K(B_{3},B_{4}),\qquad(\mbox{using}\,\eqref{eoLit}\,\mbox{with}\,\psi=\widetilde{\eta}_{x}\,\mbox{and}\,\widetilde{\eta}_{x}(.,0)=0).
					\end{array}
					\end{equation}
				We use similar sort of arguments to show that 
				\begin{equation}\label{eog34}
				\begin{split}
				\|F_{3}(\widetilde{\sigma}+\overline{\rho},\widetilde{\bf w}+\widetilde{\eta}_{t}\vec{e}_{2},\widetilde{\eta})\|_{L^{\infty}(0,T;H^{1/2}(\Gamma_{s}))}\leqslant K\|\rho_{0}\|_{H^{2}(\Gamma_{s})}+K(B_{1},B_{2},B_{3},B_{4})T^{1/2}.
				\end{split}
				\end{equation}
				Combine \eqref{eog32} with \eqref{eog34} to prove \eqref{eg3}$(i).$\\[1.mm]
				(ii) {Estimate of $(F_{3}(\widetilde{\sigma}+\overline{\rho},\widetilde{\bf w}+\widetilde{\eta}_{t}\vec{e}_{2},\widetilde{\eta}))_{t}$ (defined in \eqref{F123}) in $L^{2}(0,T;L^{2}(\Gamma_{s}))$:}\\
				{First let us estimate  $(\widetilde{w}_{2,z}+\widetilde{\eta}_{t}\vec{e}_{2})_{t}$ in $L^{2}(0,T;L^{2}(\Gamma_{s}))$:}
				\begin{equation}\label{eog35}
				\begin{split}
				&\|(\widetilde{w}_{2,z}+\widetilde{\eta}_{t})_{t}\|_{L^{2}(0,T;L^{2}(\Gamma_{s}))}\\[1.mm]
				&\leqslant \|\widetilde{w}_{2,tz}\|_{L^{2}(0,T;L^{2}(\Gamma_{s}))}+\|\widetilde{\eta}_{tt}\|_{L^{2}(0,T;L^{2}(\Gamma_{s}))}\\[1.mm]
				&\leqslant T^{1/6}K(\|\widetilde{\bf w}_{t}(\cdot,0)\|_{{\bf H}^{1}_{0}(\Omega)}+\|\widetilde{\bf w}_{t}\|_{{\bf H}^{2,1}_{\Sigma_{T}}(Q_{T})})+T^{1/2}K\|\widetilde{\eta}_{tt}\|_{L^{\infty}(0,T;L^{2}(\Gamma_{s}))}\\[1.mm]
				&\qquad\qquad\qquad\qquad\qquad\qquad\qquad\qquad(\mbox{using Lemma}\,\ref{leoin}\,\mbox{with {\bf f} replaced by}\,\widetilde{\bf w}_{t})\\[1.mm]
				&\leqslant T^{1/6}K(\left\|\frac{1}{\rho_{0}}\big(G_{2}\mid_{t=0}-(-\mu \Delta -(\mu+\mu{'})\nabla\mathrm{div})({\bf u}_{0}-z\eta_{1}\vec{e_{2}})\big)\right\|_{{\bf H}^{1}_{0}(\Omega)}\\
				&\qquad\qquad+K(B_{3}))+T^{1/2}K(B_{4}).
				\qquad(\mbox{using}\,\eqref{twt0}\,\mbox{and the inequalities}\,\eqref{twB3}\,\mbox{qnd}\,\eqref{teB4})
				\end{split}
				\end{equation}
				Using similar line of arguments one can prove that the trace of $(F_{3}(\widetilde{\sigma}+\overline{\rho},\widetilde{\bf w}+\widetilde{\eta}_{t}\vec{e}_{2},\widetilde{\eta}))_{t}$ on $\Gamma_{s}$ belongs to $L^{2}(0,T;L^{2}(\Gamma_{s}))$ and the following inequality is true for $T<1,$
				\begin{equation}\label{eog36}
					\begin{split}
						\|(F_{3}(\widetilde{\sigma}+\overline{\rho},\widetilde{\bf w}+\widetilde{\eta}_{t}\vec{e}_{2},\widetilde{\eta}))_{t}\|_{L^{2}(0,T;L^{2}(\Gamma_{s}))}\leqslant T^{1/6}K(B_{1},B_{2},B_{3},B_{4}).
					\end{split}
				\end{equation}
			Combining \eqref{eog35} and \eqref{eog36}, we conclude \eqref{eg3}$(ii).$
				\end{proof}
				\begin{lem}\label{eoWs}
							Let $B^{*}_{0}$ and $T^{*}_{0}$ are as in Lemma \ref{nonempty} and $B_{i}\geqslant B^{*}_{0}$ ($\forall\, 1\leqslant i\leqslant 4$). Then there exist $K_{8}=K_{8}(B_{1},B_{2},B_{3},B_{4})>0,$ and $K_{9}=K_{9}(B_{3},B_{4})>0$ such that for all $0<T\leqslant T_{0}^{*}(B_{1},B_{2},B_{3},B_{4})$ and $(\widetilde{\sigma},\widetilde{\bf w},\widetilde{\eta})\in \mathscr{C}_{T}(B_{1},B_{2},B_{3},B_{4}),$ we have the following estimates
						(recall the notation $\widetilde{W}$ from \eqref{dotW})
						\begin{equation}\label{esW}
						\begin{array}{ll}
						(i)&\|\widetilde{W}({\widetilde{\bf w},\widetilde{\eta}})\|_{L^{1}(0,T;{\bf H}^{3}(\Omega))}\leqslant K_{8}(B_{1},B_{2},B_{3},B_{4})T^{1/2},\\[1.mm]
						(ii)&\|\widetilde{W}({\widetilde{\bf w},\widetilde{\eta}})\|_{L^{\infty}(0,T;{\bf H}^{2}(\Omega))}\leqslant K_{9}(B_{3},B_{4})+K_{8}(B_{1},B_{2},B_{3},B_{4})T^{1/2},\\[1.mm]
						(iii)&\|\widetilde{\sigma}_{t}\|_{L^{2}(0,T;L^{3}(\Omega))}+\|\nabla\widetilde{\sigma}\|_{L^{2}(0,T;L^{3}(\Omega))}\leqslant K_{8}(B_{1},B_{2},B_{3},B_{4})T^{1/2}.
						\end{array}
						\end{equation} 
				\end{lem}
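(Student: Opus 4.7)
\textbf{Proof proposal for Lemma \ref{eoWs}.}
The plan is to estimate each of the three quantities by combining the product estimate from Lemma \ref{lfpss}, the Sobolev embedding in dimension two, the pointwise bound \eqref{eog1*} on $1/(1+\widetilde{\eta})$, and the fact that $\widetilde{\eta}(\cdot,0)=0$, so that $\widetilde{\eta}$ and $\widetilde{\eta}_x$ inherit a factor $T^{1/2}$ via Lemma \ref{futcl}. These are exactly the ingredients already used in the proofs of Lemmas \ref{eog1}--\ref{eog3}, so the argument is a matter of careful bookkeeping rather than of any new idea.

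For \eqref{esW}$(iii)$, I would invoke the two-dimensional Sobolev embedding $H^1(\Omega)\hookrightarrow L^3(\Omega)$, which, combined with $\|\nabla\widetilde{\sigma}\|_{L^\infty(0,T;\mathbf{H}^1(\Omega))}\leqslant K B_1$ and $\|\widetilde{\sigma}_t\|_{L^\infty(0,T;H^1(\Omega))}\leqslant B_2$ from \eqref{tsB12}, together with a H\"older estimate in time, immediately gives the required bound with prefactor $T^{1/2}$.

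For \eqref{esW}$(i)$--$(ii)$ I would decompose
$$\widetilde{W}(\widetilde{\bf w},\widetilde{\eta})=\widetilde{w}_1\vec{e}_1+\frac{\widetilde{w}_2}{1+\widetilde{\eta}}\vec{e}_2-\frac{z\widetilde{\eta}_x\widetilde{w}_1}{1+\widetilde{\eta}}\vec{e}_2.$$
The first summand is treated directly: H\"older in time and \eqref{twB3} give $\|\widetilde{w}_1\|_{L^1(0,T;H^3(\Omega))}\leqslant T^{1/2}B_3$ and $\|\widetilde{w}_1\|_{L^\infty(0,T;H^2(\Omega))}\leqslant B_3$. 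For the remaining two summands, I would view $1/(1+\widetilde{\eta})$ as a $z$-independent function on $\Omega$; by \eqref{eog1*} and the tensor product structure $\Omega=\mathbb{T}_L\times(0,1)$, its norm in $L^\infty(0,T;H^{9/2}(\Omega))$ is controlled by $K(B_4)$. Lemma \ref{lfpss} then makes $1/(1+\widetilde{\eta})$ a multiplier on $H^s(\Omega)$ for all $s\leqslant 3$, and iterated application with the bounds \eqref{twB3}--\eqref{teB4} produces pointwise-in-time bounds in $H^3$ and $H^2$ on these summands depending only on $B_3,B_4$; a further H\"older in time supplies the factor $T^{1/2}$ needed for $(i)$. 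For $(ii)$, the crucial observation is that $\widetilde{\eta}_x(\cdot,0)=0$, so Lemma \ref{futcl} yields $\|\widetilde{\eta}_x\|_{L^\infty(0,T;H^{7/2}(\Gamma_s))}\leqslant T^{1/2}K(B_4)$, giving a $T^{1/2}$ prefactor to the third summand; the summand $\widetilde{w}_2/(1+\widetilde{\eta})\,\vec{e}_2$, however, does \emph{not} vanish at $t=0$ and is precisely what forces the contribution $K_9(B_3,B_4)$ in \eqref{esW}$(ii)$.

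The only subtlety lies in respecting the condition $r>d/2=1$ of Lemma \ref{lfpss} throughout each product, which is why the $H^{9/2}$-regularity of $\widetilde{\eta}$ (comfortably above the threshold) plays a central role; besides this bookkeeping the proof is mechanical and the constants $K_8$ and $K_9$ are obtained by summing all the partial contributions.
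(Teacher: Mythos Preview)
Your proposal is correct and follows essentially the same route as the paper: the same three-term decomposition of $\widetilde{W}$, Lemma \ref{lfpss} for the products, the bound \eqref{eog1*} on $1/(1+\widetilde{\eta})$, Lemma \ref{futcl} applied to $\widetilde{\eta}_x$ (using $\widetilde{\eta}_x(\cdot,0)=0$) to extract the $T^{1/2}$ factor in $(ii)$, and the embedding $H^1(\Omega)\hookrightarrow L^3(\Omega)$ together with H\"older in time for $(iii)$. One small slip: the claim $\|\widetilde{\eta}_x\|_{L^\infty(0,T;H^{7/2}(\Gamma_s))}\leqslant T^{1/2}K(B_4)$ overreaches, since \eqref{teB4} only gives $\widetilde{\eta}_t\in L^2(0,T;H^4(\Gamma_s))$, hence $\widetilde{\eta}_{xt}\in L^2(0,T;H^3(\Gamma_s))$; but only the $H^2(\Gamma_s)$ bound is needed for the $H^2(\Omega)$ estimate in $(ii)$ (as the paper does in \eqref{eoWs3}), so this does not affect the argument.
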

				\begin{remark}
					We emphasize that $K_{9}$ does not depend on $B_{1}$ and $B_{2}.$
				\end{remark}
				\begin{proof}
					(i) One can use Lemma \ref{lfpss} to check that $\widetilde{W}({\widetilde{\bf w},\widetilde{\eta}})\in L^{2}(0,T; {\bf H}^{3}(\Omega)).$ As a consequence, \eqref{esW}$(i)$ follows.\\[1.mm]
					(ii) The following estimates follow from the regularity of $\widetilde{\bf w}.$
					\begin{equation}\label{eoWs1}
					\begin{split}
					\|\widetilde{w}_{1}\|_{L^{\infty}(0,T;H^{2}(\Omega))}\leqslant K(B_{3}),
					\end{split}
					\end{equation}
				    \begin{equation}\label{eoWs2}
					\begin{split}
					\left\|\frac{1}{(1+\widetilde{\eta})}\widetilde{w}_{2}\right\|_{L^{\infty}(0,T;H^{2}(\Omega))}\leqslant K(B_{3},B_{4}),\qquad(\mbox{using}\,\eqref{eog1*})
					\end{split}
					\end{equation}
					and
					\begin{equation}\label{eoWs3}
					\begin{split}
					&\left\|\frac{1}{(1+\widetilde{\eta})}\widetilde{w}_{1}z\widetilde{\eta}_{x}\right\|_{L^{\infty}(0,T;H^{2}(\Omega))}\\[1.mm]
					&\leqslant K(B_{4})(\|\widetilde{\bf w}\|_{L^{\infty}(0,T;{\bf H}^{2}(\Omega))}\|\widetilde{\eta}_{x}\|_{L^{\infty}(0,T;H^{2}(\Gamma_{s}))})\\[1.mm]
					&\qquad\qquad\qquad\qquad\qquad\qquad\qquad\qquad(\mbox{using}\,\eqref{eog1*}\,\mbox{and Lemma}\,\ref{lfpss})\\[1.mm]
					&\leqslant T^{1/2}K(B_{4})(\|\widetilde{\bf w}\|_{L^{\infty}(0,T;{\bf H}^{2}(\Omega))}\|\widetilde{\eta}_{xt}\|_{L^{2}(0,T;H^{2}(\Gamma_{s}))})\\[1.mm]
					&\qquad\qquad\qquad\qquad\qquad\qquad\qquad\qquad(\mbox{using}\,\eqref{eoLit}\,\mbox{with}\,\psi=\widetilde{\eta}_{x}\,\mbox{and}\,\widetilde{\eta}_{x}(.,0)=0)\\[1.mm]
					& \leqslant T^{1/2}K(B_{3},B_{4}).
					\end{split}
					\end{equation}
					Combine \eqref{eoWs1}, \eqref{eoWs2} and \eqref{eoWs3} to prove \eqref{esW}$(ii).$\\[1.mm]
					(iii) From the definition of ${\mathscr{C}}_{T}(B_{1},B_{2},B_{3},B_{4})$ we know that $\widetilde{\sigma}_{t}$ is in $L^{\infty}(0,T; H^{1}(\Omega))$ and $\nabla\widetilde{\sigma}$ belongs to $L^{\infty}(0,T; {\bf H}^{1}(\Omega)).$ Hence one uses the continuous embedding $H^{1}(\Omega)\hookrightarrow L^{3}(\Omega)$ to obtain that the embedding from $L^{\infty}(0,T;H^{1}(\Omega))\hookrightarrow L^{2}(0,T;L^{3}(\Omega))$ has a norm of size $\sqrt{T}.$ We then easily derive \eqref{esW}$(iii).$
					\end{proof}
			 	\subsubsection{Choices of $B_{1},$ $B_{2},$ $B_{3}$ and $B_{4}$}
			 		Now we will choose the constants $B_{i}\geqslant B^{*}_{0}$ ($\leqslant i\leqslant 4$) such that for a small enough time $0<T\leqslant T^{*}_{0}(B_{1},B_{2},B_{3},B_{4}),$ $L$ maps $\mathscr{C}_{T}(B_{1},B_{2},B_{3},B_{4})$ into itself.
			 	\begin{lem}\label{inclusion}
			 	Let $B^{*}_{0}$ and $T^{*}_{0}$ are as in Lemma \ref{nonempty}. There exist constants $B_{i}\geqslant B^{*}_{0}$ ($1\leqslant i\leqslant 4$) and a time $T^{*}(B_{1},B_{2},B_{3},B_{4})$ satisfying $0<T^{*}(B_{1},B_{2},B_{3},B_{4})\leqslant T^{*}_{0}(B_{1},B_{2},B_{3},B_{4})$ such that for all $0<T\leqslant T^{*}(B_{1},B_{2},B_{3},B_{4}),$ $L$ maps $\mathscr{C}_{T}(B_{1},B_{2},B_{3},B_{4})$ into itself.
			 		\end{lem}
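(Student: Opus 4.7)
The plan is to apply the three linear existence results to the decoupled system \eqref{3.2}: Theorem \ref{t22} together with Corollary \ref{dencor} governs $\sigma$, Theorem \ref{t21} governs $\mathbf{w}$, and Theorem \ref{t23} governs $\eta$. By Lemma \ref{wellpos}, every regularity hypothesis needed by these theorems is met once $(\widetilde\sigma,\widetilde{\mathbf w},\widetilde\eta) \in \mathscr{C}_T(B_1,B_2,B_3,B_4)$, and by the same lemma the source terms at $t=0$ coincide with $G_2^0$ and $G_3^0$. Thus the problem reduces to tracking the norms of $(\sigma,\mathbf{w},\eta)$ produced by these linear theorems, using the source-term bounds already obtained in Lemmas \ref{eog1}--\ref{eoWs}.

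Each component fits the same template: a universal constant times the data norms, plus a $T^{1/2}$ (or $T^{1/6}$ for the beam) remainder whose coefficient depends on $B_1,\ldots,B_4$. Specifically, combining Theorem \ref{t23} with Lemma \ref{eog3} gives $\|\eta\|_{Y_3^T} \leq c_4 \|\eta_1\|_{H^3} + c_4\|G_3^0\|_{H^1} + c_4 K_6\|(\rho_0,\mathbf{u}_0)\|_{H^2} + c_4 K_7(B)T^{1/6}$. Combining Theorem \ref{t22} and Corollary \ref{dencor} with Lemmas \ref{eog1} and \ref{eoWs} bounds $\|\sigma\|_{L^\infty(H^2)}$ and $\|\sigma_t\|_{L^\infty(H^1)}$ analogously; here the factor $\exp(c_2 \|\widetilde W\|_{L^1(H^3)})$ is tamed by \eqref{esW}(i), and the only $B$-dependence in the data coefficient enters via $K_9(B_3,B_4)$. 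Finally, Theorem \ref{t21} applied with $\overline\sigma = \widetilde\sigma + \overline\rho$ yields the $Y_2^T$-estimate for $\mathbf{w}$; the composite factor $(1+\|\widetilde\sigma_t\|_{L^2(L^3)}+\|\nabla\widetilde\sigma\|_{L^2(L^3)})\exp(c_1\|\widetilde\sigma_t\|^2_{L^2(L^3)})$ is universally controlled once $T^{1/2}K_8$ is small by \eqref{esW}(iii), and Lemma \ref{eog2} then expresses the bound as a universal multiple of the data norms plus the $B_1,B_4$-dependent piece $K_4(B_1,B_4)$ and a $T^{1/2}$ remainder. The $B_i$ are then fixed in a cascade: first $B_4$ to absorb twice the beam data part; then $B_1$ to absorb twice the $\|\sigma\|_{L^\infty(H^2)}$ data part; then $B_3$ to dominate the velocity estimate, which depends on the earlier $B_i$ only through $K_4$; finally $B_2$, which enters no constant on the right-hand side. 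Once all $B_i$ are frozen, a time $T^*(B_1,\ldots,B_4) \leq T_0^*$ small enough to make every $T^{1/6}$ or $T^{1/2}$ remainder at most $B_i/2$ closes the four norm bounds \eqref{tsB12}--\eqref{teB4}.

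Condition \eqref{tweq0} and the initial identities in \eqref{tswe0} follow directly from the linear problem \eqref{3.2}, while \eqref{etatt0} and \eqref{twt0} come from evaluating \eqref{3.2}$_6$ and \eqref{3.2}$_2$ at $t=0$, using that $G_2|_{t=0}=G_2^0$ and $G_3|_{t=0}=G_3^0$ by Lemma \ref{wellpos}. For the pointwise constraints, $\eta(\cdot,0)=0$ combined with $H^3(\Gamma_s)\hookrightarrow L^\infty(\Gamma_s)$ gives $\|\eta\|_{L^\infty(\Sigma^s_T)} \leq CTB_4$, securing \eqref{1etad0} for small $T$; interpolating $\sigma-\sigma_0$ between $L^\infty(0,T;H^2)$ and $W^{1,\infty}(0,T;H^1)$ and using $H^{1+\varepsilon}(\Omega)\hookrightarrow L^\infty(\Omega)$ yields $\|\sigma(\cdot,t)-\sigma_0\|_{L^\infty} = o_T(1)$, securing \eqref{tsgmM} after one further reduction of $T^*$. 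The main obstacle is keeping the cascade non-circular: the velocity estimate involves $K_4(B_1,B_4)$ and the density estimate involves $K_9(B_3,B_4)$, so the ordering above is essential, and one must first verify that the exponential factor in the velocity estimate remains universally bounded (thanks to \eqref{esW}(iii)) before $B_3$ is committed, otherwise the constant multiplying the data in that estimate would itself depend on $B_3$ and the argument would close only after a fixed-point in $B_3$.
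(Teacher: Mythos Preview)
Your proposal is correct and follows essentially the same approach as the paper: fix $B_1$ and $B_4$ from the data alone, then $B_3$ using $K_4(B_1,B_4)$, then $B_2$ using $K_9(B_3,B_4)$, and finally shrink $T$ so that every $T^{1/2}$- or $T^{1/6}$-remainder (including those hidden in the exponential factors via \eqref{esW}) is absorbed and the pointwise constraints \eqref{1etad0}--\eqref{tsgmM} hold. The paper's only additional bookkeeping is to take each $B_i$ as the maximum with $B_0^*$ so that Lemma~\ref{nonempty} applies, and to list explicitly the successive time thresholds $T_1^*,\ldots,T_5^*$, but the dependency analysis and the cascade you describe are exactly the ones carried out there.
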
 
			 		\begin{proof}
			 			In the following we will fix $B_{i}$ ($1\leqslant i\leqslant 4$) in a hierarchical order. We use the constants $B^{*}_{0}$ (Lemma \ref{nonempty}), $c_{4}$ (Theorem \ref{t23}), $K_{6}$ (Lemma \ref{eog3}), $c_{1}$ (Theorem \ref{t21}), $K_{4}$ (Lemma \ref{eog2}), $K_{5}$ (Lemma \ref{eog2}), $c_{3}$ (Theorem \ref{t22}), $K_{9}$ (Lemma \ref{eoWs}) and $K_{2}$ (Lemma \ref{eog1}). First we set $B_{1}$ and $B_{4}$ as follows
			 			\begin{equation}\label{chooseBi1}
			 			\left\{ \begin{split}
			 			& B_{1}=\mbox{max}\{2(\|\sigma_{0}\|_{H^{2}(\Omega)}+1),B^{*}_{0}\},\\
			 			& B_{4}=\mbox{max}\{c_{4}(\|\eta_{1}\|_{H^{3}(\Gamma_{s})}+\|G_{3}\mid_{t=0}\|_{ H^{1}(\Gamma_{s})}+K_{6}\|(\rho_{0},{\bf u}_{0})\|_{H^{2}(\Omega)\times {\bf H}^{2}(\Omega)}+1),B^{*}_{0}\}.
			 			\end{split}\right.
			 			\end{equation}
			 			Now using $B_{1}$ and $B_{4}$ we choose $B_{2}$ and $B_{3}$ in the following order.
			 			\begin{equation}\label{chooseBi2}
			 			 \begin{split}
			 			& B_{3}=\mbox{max}\{c_{1}(2+K_{5}\|G_{2}\mid_{t=0}\|_{{\bf L}^{2}(\Omega)}+4(1+K_{4}(B_{1},B_{4})\\
			 			&\qquad\qquad\qquad\qquad\qquad+\left\|\frac{G_{2}\mid_{t=0}-(-\mu \Delta -(\mu+\mu{'})\nabla\mathrm{div})({\bf u}_{0}-z\eta_{1}\vec{e_{2}})}{\rho_{0}}\right\|_{{\bf H}^{1}(\Omega)})),B^{*}_{0}\},
			 			\end{split}
			 			\end{equation}
			 			and 
			 			\begin{equation}\label{chooseBi3}
			 			\begin{split}
			 			B_{2}=\mbox{max}\{c_{3}K_{9}(B_{3},B_{4})\|\sigma_{0}\|_{H^{2}(\Omega)}+K_{2}\|\rho_{0}\mbox{div}({\bf u}_{0})\|_{ H^{1}(\Omega)}+1,B^{*}_{0}\}.
			 			\end{split}
			 			\end{equation}
			    In the rest of the proof we verify that with the choices \eqref{chooseBi1}, \eqref{chooseBi2} and \eqref{chooseBi3} of $B_{i}$ ($\forall\,1\leqslant i\leqslant 4$), there exists a time $T^{*}(B_{1},B_{2},B_{3},B_{4})$ such that for all $0<T\leqslant T^{*}(B_{1},B_{2},B_{3},B_{4}),$ $L$ maps $\mathscr{C}_{T}(B_{1},B_{2},B_{3},B_{4})$ into itself.\\
			     Let $(\widetilde{\sigma},\widetilde{\bf w},\widetilde{\eta})\in \mathscr{C}_{T}(B_{1},B_{2},B_{3},B_{4})$ and $L(\widetilde{\sigma},\widetilde{\bf w},\widetilde{\eta})=(\sigma,{\bf w},\eta).$ From Theorem \ref{t21}, Theorem \ref{t22} and Theorem \ref{t23} we know that $({\sigma},{\bf w},\eta)$ satisfies the following inequalities with
			     $$(G_{1},G_{2},G_{3})=(G_{1}(\widetilde{\sigma},\widetilde{\bf w},\widetilde{\eta}),G_{2}(\widetilde{\sigma},\widetilde{\bf w},\widetilde{\eta}),G_{3}(\widetilde{\sigma},\widetilde{\bf w},\widetilde{\eta})).$$
			 	\begin{equation}\label{esswe}
			 \left\{	\begin{split}
			 		&\|{\bf w} \|_{L^{\infty}(0,T;{\bf H}^{2}(\Omega))}+\|{\bf w} \|_{L^{2}(0,T;{\bf H}^{3}(\Omega))}+\|{\bf w}_{t} \|_{L^{\infty}(0,T;{\bf H}^{1}(\Omega))}+\|{\bf w}_{t} \|_{L^{2}(0,T;{\bf H}^{2}(\Omega))}\\
			 		&+\|{\bf w}_{tt}\|_{L^{2}(0,T;{\bf L}^{2}(\Omega))}
			 		\leqslant c_{1}\{\|G_{2}\|_{L^{2}(0,T;{\bf H}^{1}(\Omega))} +\|G_{2} \|_{L^{\infty}(0,T;{\bf L}^{2}(\Omega))}+\big(\|G_{2,t} \|_{L^{2}(0,T;{\bf L}^{2}(\Omega))}\\
			 		&+\left\|\frac{G_{2}\mid_{t=0}-(-\mu \Delta -(\mu+\mu{'})\nabla\mathrm{div})({\bf u}_{0}-z\eta_{1}\vec{e_{2}})}{\rho_{0}}\right\|_{{\bf H}^{1}(\Omega)}\big)
			 		\cdot (1+\|\widetilde{\sigma}_{t}\|_{L^{2}(0,T;L^{3}(\Omega))}\\
			 		&+\|\nabla\widetilde{\sigma}\|_{L^{2}(0,T;L^{3}(\Omega))})\mathrm{exp}(c_{1}\|\widetilde{\sigma}_{t}\|^{2}_{L^{2}(0,T;L^{3}(\Omega))})\},
			 	\end{split}\right.
			 	\end{equation}
			 	\begin{equation}\label{esswe1}
			 \left\{	\begin{split}
			 	\|\sigma\|_{L^{\infty}(0,T;H^{2}(\Omega))}&\leqslant(\|\sigma_{0}\|_{H^{2}(\Omega)}+c_{2}\| G_{1}\|_{L^{1}(0,T;H^{2}(\Omega))})\mathrm{exp}(c_{2}\|\widetilde {W}\|_{L^{1}(0,T;{\bf H}^{3}(\Omega))}),\\[1.mm]
			 		\|\sigma_{t}\|_{L^{\infty}(0,T; H^{1}(\Omega))}&\leqslant c_{3}\|\widetilde{W}\|_{L^{\infty}(0,T;{\bf H}^{2}(\Omega))}[(\|\sigma_{0}\|_{H^{2}(\Omega)}+c_{2}\| G_{1}\|_{L^{1}(0,T;H^{2}(\Omega))})
			 		\vspace{1.mm}\\
			 		&\cdot\mathrm{exp}(c_{2}\|\widetilde {W}\|_{L^{1}(0,T;{\bf H}^{3}(\Omega))})]+\| G_{1}\|_{L^{\infty}(0,T; H^{1}(\Omega))},
			 	\end{split}\right.
			 	\end{equation}
			 	and
			 	\begin{equation}\label{esswe3}
			 	\left\{\begin{split}
			 		\|\eta\|_ {L^{\infty}(0,T;H^{9/2}(\Gamma_{s}))} &+\|\eta_{t}\|_{L^{2}(0,T;H^{4}(\Gamma_{s}))}+\|\eta_{t}\|_{L^{\infty}(0,T;H^{3}(\Gamma_{s}))}+\|\eta_{tt}\|_{ L^{2}(0,T;H^{2}(\Gamma_{s}))}\\[1.mm]
			 		&+\|\eta_{tt}\|_{L^{\infty}([0,T]; H^{1}(\Gamma_{s}))}+\|\eta_{ttt}\|_{L^{2}(0,T;L^{2}(\Gamma_{s}))}\leqslant c_{4}\big(\|\eta_{1}\|_{H^{3}(\Gamma_{s})}
			 		\\[1.mm]
			 		&+\|G_{3}\mid_{t=0}\|_{ H^{1}(\Gamma_{s})}+\|G_{3}\|_{L^{\infty}(0,T;H^{1/2}(\Gamma_{s}))}	+\|{G}_{3,t}\|_{ L^{2}(0,T;{L}^{2}(\Gamma_{s}))}\big).
			 	\end{split}\right.
			 	\end{equation}
			 	$(i)$ Using the estimate \eqref{eg3} on $G_{3}(\widetilde{\sigma},\widetilde{\bf w},\widetilde{\eta})$ in \eqref{esswe3} we obtain:
			 	\begin{equation}\label{esswe3*}
			 	\left\{\begin{split}
			     &\|\eta\|_ {L^{\infty}(0,T;H^{9/2}(\Gamma_{s}))} +\|\eta_{t}\|_{L^{2}(0,T;H^{4}(\Gamma_{s}))}+\|\eta_{t}\|_{L^{\infty}(0,T;H^{3}(\Gamma_{s}))}+\|\eta_{tt}\|_{ L^{2}(0,T;H^{2}(\Gamma_{s}))}\\[1.mm]
			     &+\|\eta_{tt}\|_{L^{\infty}([0,T]; H^{1}(\Gamma_{s}))}+\|\eta_{ttt}\|_{L^{2}(0,T;L^{2}(\Gamma_{s}))}\leqslant c_{4}(\|\eta_{1}\|_{H^{3}(\Gamma_{s})}
			     +\|G_{3}\mid_{t=0}\|_{ H^{1}(\Gamma_{s})}\\
			     &+K_{6}\|(\rho_{0},{\bf u}_{0})\|_{H^{2}(\Omega)\times {\bf H}^{2}(\Omega)}+K_{7}(B_{1},B_{2},B_{3},B_{4})T^{1/2}
			     +T^{1/6}K_{7}(B_{1},B_{2},B_{3},B_{4}).
			 	\end{split}\right.
			 	\end{equation}
			 	Now choose $T^{*}_{1}=T^{*}_{1}(B_{1},B_{2},B_{3},B_{4})(\leqslant T^{*}_{0}(B_{1},B_{2},B_{3},B_{4}))$ small enough positive such that 
			 	\begin{equation}\label{T1*}
			 	\begin{split}
			    K_{7}(B_{1},B_{2},B_{3},B_{4})(T^{*}_{1})^{1/2}+K_{7}(B_{1},B_{2},B_{3},B_{4})(T^{*}_{1})^{1/6}<1.
			 	\end{split}
			 	\end{equation}
			 	In view of the choice of $B_{4}$ (see \eqref{chooseBi1}) and \eqref{T1*}, for all $0<T\leqslant T^{*}_{1}$ one verifies that
			 	\begin{equation}\label{esswe3**}
			 	\left\{\begin{split}
			 	&\|\eta\|_ {L^{\infty}(0,T;H^{9/2}(\Gamma_{s}))} +\|\eta_{t}\|_{L^{2}(0,T;H^{4}(\Gamma_{s}))}+\|\eta_{t}\|_{L^{\infty}(0,T;H^{3}(\Gamma_{s}))}+\|\eta_{tt}\|_{ L^{2}(0,T;H^{2}(\Gamma_{s}))}\\[1.mm]
			 	&+\|\eta_{tt}\|_{L^{\infty}([0,T]; H^{1}(\Gamma_{s}))}+\|\eta_{ttt}\|_{L^{2}(0,T;L^{2}(\Gamma_{s}))}\leqslant B_{4}.
			 	\end{split}\right.
			 	\end{equation}
			 	\medskip
			 	$(ii)$ Using the estimates  \eqref{eg1}$(i)$ on $G_{1}(\widetilde{\sigma},\widetilde{\bf w},\widetilde{\eta})$ and \eqref{esW}$(i)$ on $\widetilde{W}({\widetilde{\bf w},\widetilde{\eta}})$ in \eqref{esswe1}$_{1}$ furnish
			 	\begin{equation}\label{B1}
			 	\begin{array}{l}
			 		\|\sigma\|_{L^{\infty}(0,T;H^{2}(\Omega))}\leqslant(\|\sigma_{0}\|_{H^{2}(\Omega)}+c_{2}K_{1}(B_{1},B_{2},B_{3},B_{4})T^{1/2})\mathrm{exp}(c_{2}K_{8}(B_{1},B_{2},B_{3},B_{4})T^{1/2}).
			 	\end{array}
			 	\end{equation}
			 	Choose $T^{*}_{2}=T^{*}_{2}(B_{1},B_{2},B_{3},B_{4})(\leqslant T^{*}_{1})$ small enough positive such that
			 	\begin{equation}\label{T2*}
			 	\begin{array}{l}
			 	c_{2}K_{1}(B_{1},B_{2},B_{3},B_{4})(T^{*}_{2})^{1/2}<1\quad\mbox{and}\quad \mathrm{exp}(c_{2}K_{8}(B_{1},B_{2},B_{3},B_{4})(T^{*}_{2})^{1/2})<2.
			 	\end{array}
			 	\end{equation}
			  In view of the choice of $B_{1}$ (see \eqref{chooseBi1}) and \eqref{T2*}, for all $0<T\leqslant T^{*}_{2}$ the following holds
			  \begin{equation}\label{B1*}
			  \begin{array}{l}
			  \|\sigma\|_{L^{\infty}(0,T;H^{2}(\Omega))}\leqslant B_{1}.
			  \end{array}
			  \end{equation}
			 	\medskip
			 	$(iii)$  Using the estimates \eqref{eg2} on $G_{2}(\widetilde{\sigma},\widetilde{\bf w},\widetilde{\eta})$ and \eqref{esW}$(iii)$ on $\widetilde{\sigma}$ in \eqref{esswe} to obtain
			 		\begin{equation}\label{esswe*}
			 		\left\{	\begin{split}
			 		&\|{\bf w} \|_{L^{\infty}(0,T;{\bf H}^{2}(\Omega))}+\|{\bf w} \|_{L^{2}(0,T;{\bf H}^{3}(\Omega))}+\|{\bf w}_{t} \|_{L^{\infty}(0,T;{\bf H}^{1}(\Omega))}+\|{\bf w}_{t} \|_{L^{2}(0,T;{\bf H}^{2}(\Omega))}+\|{\bf w}_{tt}\|_{L^{2}(0,T;{\bf L}^{2}(\Omega))}
			 		\vspace{1.mm}\\
			 		&\leqslant c_{1}\{K_{5}\|G_{2}\mid_{t=0}\|_{{\bf L}^{2}(\Omega)}+ 2K_{3}(B_{1},B_{2},B_{3},B_{4})T^{1/2}\\
			 		&+\left(K_{3}(B_{1},B_{2},B_{3},B_{4})T^{1/2}+K_{4}(B_{1},B_{4})
			 		+\left\|\frac{G_{2}\mid_{t=0}-(-\mu \Delta -(\mu+\mu{'})\nabla\mathrm{div})({\bf u}_{0}-z\eta_{1}\vec{e_{2}})}{\rho_{0}}\right\|_{{\bf H}^{1}(\Omega)}\right)\\
			 		&\cdot (1+K_{8}(B_{1},B_{2},B_{3},B_{4})T^{1/2})\mathrm{exp}(c_{1}K_{8}^{2}(B_{1},B_{2},B_{3},B_{4})T)\}.
			 		\end{split}\right.
			 		\end{equation}
			 	Choose $T^{*}_{3}=T^{*}_{3}(B_{1},B_{2},B_{3},B_{4})(\leqslant T^{*}_{2}(B_{1},B_{2},B_{3},B_{4}))$ small enough positive such that
			 	\begin{equation}\label{T3*}
			 	\begin{array}{ll}
			 	& K_{3}(B_{1},B_{2},B_{3},B_{4})(T^{*}_{3})^{1/2}<1,\\
			 	& \mbox{and}\,\,(1+K_{8}(B_{1},B_{2},B_{3},B_{4})(T^{*}_{3})^{1/2})\mathrm{exp}(c_{1}K_{8}^{2}(B_{1},B_{2},B_{3},B_{4})T^{*}_{3})<4.
			 	\end{array}
			 	\end{equation}
			 	In view of the choice of $B_{3}$ (see \eqref{chooseBi2}) and \eqref{T3*}, for all $0<T\leqslant T^{*}_{3}$ we have
			 	\begin{equation}\label{esswe**}
			 		\begin{split}
			 	\|{\bf w} \|_{L^{\infty}(0,T;{\bf H}^{2}(\Omega))}+\|{\bf w} \|_{L^{2}(0,T;{\bf H}^{3}(\Omega))}&+\|{\bf w}_{t} \|_{L^{\infty}(0,T;{\bf H}^{1}(\Omega))}\\
			 	&+\|{\bf w}_{t} \|_{L^{2}(0,T;{\bf H}^{2}(\Omega))}+\|{\bf w}_{tt}\|_{L^{2}(0,T;{\bf L}^{2}(\Omega))}<B_{3}.
			 	\end{split}
			 	\end{equation}
			 	\medskip
			 	$(iv)$ Using the estimates \eqref{eg1} on $G_{1}(\widetilde{\sigma},\widetilde{\bf w},\widetilde{\eta})$ and \eqref{esW}$(i)$-\eqref{esW}$(ii)$ on $\widetilde{W}(\widetilde{\bf w},\widetilde{\eta})$ in \eqref{esswe1}$_{2}$ furnish
			 	\begin{equation}\label{B2}
			 	\begin{split}
			 		&\|\sigma_{t}\|_{L^{\infty}(0,T; H^{1}(\Omega))}\leqslant c_{3}(K_{9}(B_{3},B_{4})+K_{8}(B_{1},B_{2},B_{3},B_{4})T^{1/2})[(\|\sigma_{0}\|_{H^{2}(\Omega)}\\
			 		&\quad+c_{2}K_{1}(B_{1},B_{2},B_{3},B_{4})T^{1/2})
			 		\cdot\mathrm{exp}(c_{2}K_{8}(B_{1},B_{2},B_{3},B_{4})T^{1/2})]+K_{2}\|\rho_{0}\mathrm{div}({\bf u}_{0})\|_{ H^{1}(\Omega)}\\
			 		&\qquad+K_{1}(B_{1},B_{2},B_{3},B_{4})T^{1/2}.
			 	\end{split}
			 	\end{equation}
			 	Choose $T^{*}_{4}=T^{*}_{4}(B_{1},B_{2},B_{3},B_{4})(\leqslant T^{*}_{3})$ small enough positive such that
			 	\begin{equation}\label{T4*}
			 	\begin{split}
			 	 & c_{2}c_{3}K_{9}(B_{3},B_{4}) K_{1}(B_{1},B_{2},B_{3},B_{4})(T^{*}_{4})^{1/2}\cdot{\exp}(c_{2}K_{8}(B_{1},B_{2},B_{3},B_{4})(T^{*}_{4})^{1/2})\\
			 	 &+K_{8}(B_{1},B_{2},B_{3},B_{4})(T^{*}_{4})^{1/2}
			 	[(\|\sigma_{0}\|_{H^{2}(\Omega)}+c_{2}K_{1}(B_{1},B_{2},B_{3},B_{4})(T^{*}_{4})^{1/2})\\
			 	&\cdot\mathrm{exp}(c_{2}K_{8}(B_{1},B_{2},B_{3},B_{4})(T^{*}_{4})^{1/2})]
			 	+K_{1}(B_{1},B_{2},B_{3},B_{4})(T^{*}_{4})^{1/2}<1.
			 	\end{split}
			 	\end{equation}
			 	In view of the choice of $B_{2}$ (see \eqref{chooseBi3}) and \eqref{T4*}, we check that for all $0<T\leqslant T^{*}_{4}$ the following holds
			 	\begin{equation}\label{B2*}
			 	\begin{split}
			 	\|\sigma_{t}\|_{L^{\infty}(0,T; H^{1}(\Omega))}< B_{2}.
			 	\end{split}
			 	\end{equation}
			 	Hence with the choices \eqref{chooseBi1}, \eqref{chooseBi2} and \eqref{chooseBi3} of the constants $B_{i}$ ($1\leqslant i\leqslant 4$), $(\sigma,{\bf w},\eta)$ satisfies the estimates \eqref{B1*}, \eqref{B2*}, \eqref{esswe**} and \eqref{esswe3**} respectively for all $0<T\leqslant T^{*}_{4}.$ We can also use similar kind of interpolation arguments as used in \eqref{3.1} to show that there exists a $T^{*}_{5}=T^{*}_{5}(B_{1},B_{2},B_{3},B_{4})$ ($\leqslant T^{*}_{4}$), positive, such that for all $0<T\leqslant T^{*}_{5},$ 
			 	\begin{equation}\label{3.26}
			 	\begin{array}{ll}
			 	1+\eta(x,t)\geqslant\delta_{0}>0,&\quad \mbox{on}\quad \Sigma^{s}_{T}\\[1.mm]
			 	\displaystyle \frac{m}{2}\leqslant \sigma(x,z,t)+\overline{\rho}\leqslant 2M,&\quad \mbox{in}\quad Q_{T}.
			 	\end{array}
			 	\end{equation}
			 	Again it follows from the equation \eqref{3.2}$_{2}$ that  ${\bf w}_{t}(0)$ satisfies the condition \eqref{twt0}. Similarly one uses \eqref{3.2}$_{6}$ to show that $\eta_{tt}(\cdot,0)$ satisfies \eqref{etatt0}. Now we set
			 	$$T^{*}=T^{*}(B_{1},B_{2},B_{3},B_{4})=T^{*}_{5}.$$
			 	Hence if $B_{i}$ ($\forall\,1\leqslant i\leqslant 4$) is chosen as in \eqref{chooseBi1}, \eqref{chooseBi2} and \eqref{chooseBi3} and $0<T\leqslant T^{*},$ $(\sigma,{\bf w},\eta)$ satisfies all the conditions \eqref{normbnd}-\eqref{inbndc}, guaranteeing $(\sigma,{\bf w},\eta)\in \mathscr{C}_{T}(B_{1},B_{2},B_{3},B_{4}).$\\ 
			    This concludes the proof of Lemma \ref{inclusion}.
			 	\end{proof}
			 	We fix the choice of $B_{i}$ ($\forall\, 1\leqslant i\leqslant 4$) and $T=T^{*}(B_{1},B_{2},B_{3},B_{4})$ as in Lemma \ref{inclusion}. Hence in the following we will simply use the notations
			 	\begin{equation}\label{TCT}
			 	\begin{array}{l}
			 	T=T^{*}\,\,\mbox{and}\,\,\mathscr{C}_{T}=\mathscr{C}_{T}(B_{1},B_{2},B_{3},B_{4}).
			 	\end{array}
			 	\end{equation}
			 	\subsection{Compactness and continuity}\label{comcon}
			      Let us observe that $\mathscr{C}_{T}$ is a convex, bounded subset of the space
			 	$$\mathcal{X}=\{(\sigma,{\bf w},\eta)\in C^{0}([0,T],{ H}^{1}(\Omega))\times C^{0}([0,T];{\bf H}^{1}(\Omega))\times C^{1}([0,T]; H^{1}(\Gamma_{s}))\cap C^{0}([0,T];H^{2}(\Gamma_{s}))\},$$
			 	endowed with the topology induced by the norm
			 	$$\|(\sigma,{\bf w},\eta)\|_{\mathcal{X}}=\sup_{t\in[0,T]}(\|\sigma(t)\|_{H^{1}(\Omega)}+\|{\bf w}(t)\|_{{\bf H}^{1}(\Omega)}+\|\eta(t)\|_{H^{2}(\Gamma_{s})}+\|\eta_{t}(t)\|_{H^{1}(\Gamma_{s})}).$$
			 	\begin{lem}\label{com}
			    Let $\mathscr{C}_{T}$ be the set as introduced in \eqref{TCT}. The set $\mathscr{C}_{T},$ when endowed with the topology of $\mathcal{X},$ is compact in $\mathcal{X}.$
			 	\end{lem}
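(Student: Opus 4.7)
The plan is to establish both \emph{relative} compactness of $\mathscr{C}_T$ in $\mathcal{X}$ via Aubin--Lions type arguments, and then \emph{closedness} of $\mathscr{C}_T$ in the topology of $\mathcal{X}$ by passing to the limit in the defining constraints \eqref{normbnd}--\eqref{inbndc}.

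\textbf{Relative compactness.} Let $(\widetilde{\sigma}_n,\widetilde{\bf w}_n,\widetilde{\eta}_n)$ be a sequence in $\mathscr{C}_T$. By \eqref{tsB12} the sequence $\widetilde{\sigma}_n$ is bounded in $L^{\infty}(0,T;H^{2}(\Omega))$ with $(\widetilde{\sigma}_n)_t$ bounded in $L^{\infty}(0,T;H^{1}(\Omega))$; since the embedding $H^{2}(\Omega)\hookrightarrow H^{1}(\Omega)$ is compact, the Aubin--Lions--Simon lemma (see e.g. Simon, \emph{Compact sets in the space $L^{p}(0,T;B)$}) yields a subsequence converging in $C^{0}([0,T];H^{1}(\Omega))$. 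Similarly, by \eqref{twB3} the sequence $\widetilde{\bf w}_n$ is bounded in $L^{\infty}(0,T;{\bf H}^{2}(\Omega))$ with $(\widetilde{\bf w}_n)_t$ bounded in $L^{2}(0,T;{\bf H}^{2}(\Omega))\cap L^{\infty}(0,T;{\bf H}^{1}(\Omega))$, so up to a further subsequence it converges in $C^{0}([0,T];{\bf H}^{1}(\Omega))$. For the beam component, \eqref{teB4} gives $\widetilde{\eta}_n$ bounded in $L^{\infty}(0,T;H^{9/2}(\Gamma_s))$ and $(\widetilde{\eta}_n)_t$ bounded in $L^{\infty}(0,T;H^{3}(\Gamma_s))$, which by Aubin--Lions produces convergence in $C^{0}([0,T];H^{2}(\Gamma_s))$; simultaneously $(\widetilde{\eta}_n)_t$ is bounded in $L^{\infty}(0,T;H^{3}(\Gamma_s))$ with $(\widetilde{\eta}_n)_{tt}$ bounded in $L^{\infty}(0,T;H^{1}(\Gamma_s))$, giving convergence of $(\widetilde{\eta}_n)_t$ in $C^{0}([0,T];H^{1}(\Gamma_s))$. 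Extracting a common subsequence yields convergence in $\mathcal{X}$.

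\textbf{Closedness.} Let $(\widetilde{\sigma},\widetilde{\bf w},\widetilde{\eta})$ be the limit of such a convergent subsequence in $\mathcal{X}$. From the uniform bounds in \eqref{normbnd} and Banach--Alaoglu, extracting once more if needed, each higher-regularity norm is weakly--$\ast$ lower semicontinuous, so the limit satisfies \eqref{tsB12}--\eqref{teB4} with the same constants $B_1,B_2,B_3,B_4$. The pointwise inequalities \eqref{1etad0} and \eqref{tsgmM} are preserved because uniform convergence of $\widetilde{\eta}_n$ on $\overline{\Sigma^{s}_T}$ and of $\widetilde{\sigma}_n$ on $\overline{Q_T}$ (both resulting from the $C^{0}$--type convergence just established combined with Sobolev embedding) allows one to pass to pointwise limits in these closed inequalities. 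Finally the initial and boundary conditions \eqref{tweq0}--\eqref{twt0} pass to the limit: convergence in $C^{0}([0,T];H^{1}(\Omega))$, $C^{0}([0,T];{\bf H}^{1}(\Omega))$, and $C^{1}([0,T];H^{1}(\Gamma_s))\cap C^{0}([0,T];H^{2}(\Gamma_s))$ ensures that traces at $t=0$ and on $\Sigma_T$ of $\widetilde{\sigma}$, $\widetilde{\bf w}$, $\widetilde{\eta}$, $\widetilde{\eta}_t$, $\widetilde{\eta}_{tt}$, and $\widetilde{\bf w}_t$ converge to the prescribed values. Hence $(\widetilde{\sigma},\widetilde{\bf w},\widetilde{\eta})\in\mathscr{C}_T$, which completes the proof of compactness.

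\textbf{Expected main obstacle.} The delicate point is the passage to the limit for $\widetilde{\bf w}_t(\cdot,0)$ and $\widetilde{\eta}_{tt}(\cdot,0)$ in \eqref{twt0}--\eqref{etatt0}, since these are conditions on the second time derivatives at $t=0$; however, the $C^{0}([0,T];{\bf H}^{1}(\Omega))$-convergence of $\widetilde{\bf w}_{n,t}$ and the $C^{0}([0,T];H^{1}(\Gamma_s))$-convergence of $(\widetilde{\eta}_n)_{tt}$---which follow from the Aubin--Lions argument applied, respectively, to $\widetilde{\bf w}_{n,t}$ (bounded in $L^{2}(0,T;{\bf H}^{2})$ with $\widetilde{\bf w}_{n,tt}$ bounded in $L^{2}(0,T;{\bf L}^{2})$) and to $(\widetilde{\eta}_n)_{tt}$ (bounded in $L^{\infty}(0,T;H^{1})\cap L^{2}(0,T;H^{2})$ with $(\widetilde{\eta}_n)_{ttt}$ bounded in $L^{2}(0,T;L^{2})$)---allow evaluation at $t=0$ in the limit.
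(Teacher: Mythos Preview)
Your argument follows essentially the same route as the paper: Aubin--Lions--Simon for relative compactness in $\mathcal{X}$, and weak-$\ast$ lower semicontinuity together with strong $C^0$-in-time convergences to recover all the constraints \eqref{normbnd}--\eqref{inbndc} for closedness. Two technical points deserve correction, though neither changes the overall strategy.

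First, your justification of \eqref{tsgmM} for the limit is not quite right: you invoke ``uniform convergence of $\widetilde{\sigma}_n$ on $\overline{Q_T}$'' from Sobolev embedding, but in two space dimensions $H^{1}(\Omega)$ does \emph{not} embed into $L^{\infty}(\Omega)$, so convergence in $C^{0}([0,T];H^{1}(\Omega))$ does not yield uniform convergence. The paper bypasses this by observing that the closed pointwise bounds $\frac{m}{2}\leqslant\widetilde{\sigma}_n+\overline{\rho}\leqslant 2M$ are preserved already under weak-$\ast$ convergence in $L^{\infty}(Q_T)$; alternatively, strong convergence in $C^{0}([0,T];H^{1})$ gives a.e.\ convergence of a subsequence, which also suffices.

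Second, the regularity you claim for the convergence of $\widetilde{\bf w}_{n,t}$ and $(\widetilde{\eta}_n)_{tt}$ is slightly overstated. From $\widetilde{\bf w}_{n,t}$ bounded in $L^{2}(0,T;{\bf H}^{2})$ with $\widetilde{\bf w}_{n,tt}$ bounded in $L^{2}(0,T;{\bf L}^{2})$, Aubin--Lions gives compactness in $L^{2}(0,T;{\bf H}^{s})$, not in $C^{0}([0,T];{\bf H}^{1})$. What one actually uses (and what the paper does) is the pair $\widetilde{\bf w}_{n,t}\in L^{\infty}(0,T;{\bf H}^{1})$, $\widetilde{\bf w}_{n,tt}\in L^{2}(0,T;{\bf L}^{2})$, which by Simon's lemma yields compactness in $C^{0}([0,T];{\bf L}^{2})$; this is already enough to evaluate the trace at $t=0$ and recover \eqref{twt0}. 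The same remark applies to $(\widetilde{\eta}_n)_{tt}$, where one obtains $C^{0}([0,T];L^{2}(\Gamma_s))$ convergence, sufficient for \eqref{etatt0}.
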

			 	\begin{proof}
			 	We claim that the set $\mathscr{C}_{T}$ is closed in $\mathcal{X}.$ \\
			 	Assume that a sequence $(\widetilde{\sigma}_{n},\widetilde{\bf w}_{n},\widetilde\eta_{n})\in \mathscr{C}_{T}$ and that $(\widetilde{\sigma}_{n},\widetilde{\bf w}_{n},\widetilde\eta_{n})\rightarrow (\overline{\sigma},\overline{\bf w},\overline{\eta})$ in $\mathcal{X}.$ Now $\widetilde\eta_{n}\rightarrow \overline{\eta}$ in $C^{1}([0,T]; H^{1}(\Gamma_{s}))$ implies that $\widetilde\eta_{n,t}\rightarrow \overline{\eta}_{t}$ in $C^{0}([0,T]; H^{1}(\Gamma_{s})),$ $\widetilde\eta_{n,tt}\rightarrow \overline{\eta}_{tt}$ and $\widetilde\eta_{n,ttt}\rightarrow \overline{\eta}_{ttt}$  in $\mathcal{D}'(0,T;L^{2}(\Gamma_{s}))$ in particular, where $\mathcal{D}'(0,T;L^{2}(\Gamma_{s}))$ denotes the space of distributions on $(0,T)$ with values in $L^{2}(\Gamma_{s}).$ 
			 	\newline
			 	We recall the norm bounds over $\eta$ in the set $\mathscr{C}_{T}.$ Hence we have up to a subsequence (still denoted by $\widetilde\eta_{n}$) that $\widetilde\eta_{n}\rightarrow\overline{\eta}$ weak* in $L^{\infty}(0,T;H^{9/2}(\Gamma_{s})),$ $\widetilde\eta_{n,t}\rightarrow \overline{\eta}_{t}$ weakly in $L^{2}(0,T;H^{4}(\Gamma_{s}))$ and weak* in $L^{\infty}(0,T;H^{3}(\Gamma_{s})),$ $\widetilde\eta_{n,tt}\rightarrow \overline{\eta}_{tt}$ weakly in $L^{2}(0,T;H^{2}(\Gamma_{s}))$ and weak* in $L^{\infty}(0,T; H^{1}(\Gamma_{s})),$ $\widetilde\eta_{n,ttt}\rightarrow \overline{\eta}_{ttt}$ weakly in $L^{2}(0,T;L^{2}(\Gamma_{s})).$
			 	Also by the lower semi-continuity of the norms with respect to the above weak type convergences we get
			 	\begin{equation}\label{3.27}
			 	\begin{split}
			 	&\|\overline\eta\|_{L^{\infty}(0,T;H^{9/2}(\Gamma_{s}))}+	\|\overline\eta_{t}\|_{L^{\infty}(0,T;H^{3}(\Gamma_{s}))}+	\|\overline\eta_{t}\|_{L^{2}(0,T;H^{4}(\Gamma_{s}))}	+\|\overline\eta_{tt}\|_{L^{\infty}(0,T; H^{1}(\Gamma_{s}))}\\
			 	&+	\|\overline\eta_{tt}\|_{L^{2}(0,T;H^{2}(\Gamma_{s}))}
			 	+\|\overline\eta_{ttt}\|_{L^{2}(0,T;L^{2}(\Gamma_{s}))}\leqslant  B_{4}.
			 	\end{split}
			 	\end{equation}
			 As $\widetilde\eta_{n}\rightarrow \overline{\eta}$ in $C^{1}([0,T]; H^{1}(\Gamma_{s}))$ and $\widetilde\eta_{n,t}\rightarrow \overline{\eta}_{t}$ in $C^{0}([0,T]; H^{1}(\Gamma_{s})),$ hence 
			 	\begin{equation}\label{3.28}
			 	\begin{array}{l}
			 	\overline\eta(\cdot,0)=0\quad\mbox{and}\quad \overline{\eta}_{t}(0)=\eta_{1}.
			 	\end{array}
			 	\end{equation}
			 	The uniform bounds of $\|\widetilde{\eta}_{n,tt}\|_{L^{\infty}(0,T;H^{1}(\Gamma_{s}))}$ and $\|\widetilde{\eta}_{n,ttt}\|_{L^{2}(0,T;L^{2}(\Gamma_{s}))}$ and Aubin Lions lemma (\cite{aubin}) furnish that up to a subsequence (still denoted by $\widetilde{\eta}_{n}$), $\widetilde{\eta}_{n,tt}$ strongly converges to $\overline{\eta}_{tt}$ in $C^{0}([0,T];L^{2}(\Gamma_{s})).$ Hence
			 	\begin{equation}\label{inoeta}
			 	\begin{array}{l}
			 	\overline{\eta}_{tt}(\cdot,0)=\delta\eta_{1,xx}-(\mu+2\mu')(u_{0})_{2,z}+P(\rho_{0}).
			 	\end{array}
			 	\end{equation}
			 Similar arguments (used to show \eqref{3.27}) can be used to show that
			 \begin{equation}\label{3.29}
			 \begin{array}{ll}
			 \|\overline{\bf w}\|_{L^{\infty}(0,T;{\bf H}^{2}(\Omega))}+\|\overline{\bf w}\|_{{L^{2}}(0,T;{\bf H}^{3}(\Omega))}
			 +\|\overline{\bf w}_{t}\|_{L^{\infty}(0,T;{\bf H}^{1}(\Omega))}
			 &+\|\overline{\bf w}_{t}\|_{L^{2}(0,T;{\bf H}^{2}(\Omega))}\\[1.mm]
			  &+\|\overline{\bf w}_{tt}\|_{L^{2}(0,T;{\bf L}^{2}(\Omega))}\leqslant B_{3},
			 \end{array}
			 \end{equation} 	
			 \begin{equation}\label{3.30}
			 \begin{array}{l}
			 \|\overline{\sigma}\|_{L^{\infty}(0,T;H^{2}(\Omega))}\leqslant B_{1},\quad\|\overline\sigma_{t}\|_{L^{\infty}(0,T; H^{1}(\Omega))}\leqslant B_{2}.
			 \end{array}
			 \end{equation}
              Since $\widetilde{\eta}_{n}$ converges to $\overline{\eta}$ in $L^{\infty}(\Sigma^{s}_{T})$ (follows from the continuous embedding $H^{2}(\Gamma_{s})\hookrightarrow L^{\infty}(\Gamma_{s})$), one has the following (as $\widetilde{\eta}_{n}$ satisfies \eqref{1etad0})
			 	\begin{equation}\label{3.32}
			 	\begin{array}{l}
			 	1+\overline\eta(x,t)\geqslant\delta_{0}>0\quad \mbox{on}\quad \Sigma^{s}_{T}.
			 	\end{array}
			 	\end{equation}
			 	Observe that the weak* convergence of $\widetilde{\sigma}_{n}$ to $\overline{\sigma}$ in $L^{\infty}(0,T;L^{2}(\Omega))$ is enough to conclude that (since $\widetilde{\sigma}_{n}$ satisfies \eqref{tsgmM})
			 	\begin{equation}\label{bndsigma}
			 	\begin{array}{l}
			 	\displaystyle \frac{m}{2}\leqslant \overline\sigma(x,z,t)+\overline{\rho}\leqslant 2M\quad \mbox{in}\quad Q_{T}.
			 	\end{array}
			 	\end{equation}
			 	Using the strong convergence of $(\widetilde{\sigma}_{n},\widetilde{\bf w}_{n},\widetilde{\eta}_{n})$ to $(\overline{\sigma},\overline{\bf w},\overline{\eta})$ in $\mathcal{X}$ furnishes
			 	 \begin{equation}\label{3.31}
			 	 \begin{array}{ll}
			 	 \overline{\bf w}(\cdot,0)=({\bf u}_{0}-z\eta_{1}\vec{e_{2}})&\quad \mbox{in}\quad \Omega,\\
			 	 \overline{\sigma}(\cdot,0)=\sigma_{0}&\quad\mbox{in}\quad \Omega.
			 	 \end{array}
			 	 \end{equation}
			 	 Now we can use the uniform bounds of $\|\widetilde{\bf w}_{n,t}\|_{L^{\infty}(0,T;{\bf H}^{1}(\Omega))}$ and $\|\widetilde{\bf w}_{n,tt}\|_{L^{2}(0,T;{\bf L}^{2}(\Omega))}$ and the Aubin Lions lemma to have the convergence $\widetilde{\bf w}_{n,t}\rightarrow \overline{\bf w}_{t}$ in $C^{0}([0,T];{\bf L}^{2}(\Omega)).$ Consequently
			 	 \begin{equation}\label{icbwt}
			 	 \begin{split}
			 	 \overline{\bf w}_{t}(.,0)=\frac{1}{\rho_{0}}(G_{2}^0-(-\mu \Delta -(\mu+\mu{'})\nabla\mathrm{div})({\bf u}_{0}-z\eta_{1}\vec{e}_{2})).
			 	 \end{split}
			 	 \end{equation}
			 	So combining \eqref{3.27}-\eqref{3.28}-\eqref{inoeta}-\eqref{3.29}-\eqref{3.30}-\eqref{3.32}-\eqref{bndsigma}-\eqref{3.31}-\eqref{icbwt} we conclude that the limit point $(\overline{\sigma},\overline{\bf w},\overline{\eta})\in \mathscr{C}_{T}$ and hence $\mathscr{C}_{T}$ is closed in $\mathcal{X}$.
			 	\newline
			 	Once again using Aubin Lions lemma  we get that $\mathscr{C}_{T}$ is a compact subset of $\mathcal{X}.$ 
			    \end{proof}
			 	Now to apply Schauder's fixed point theorem one only needs to prove that $L$ is continuous on $\mathscr{C}_{T}.$ 
			 	\begin{lem}\label{cont}
			 		Let $\mathscr{C}_{T}$ be the set in \eqref{TCT}. The map $L$ is continuous from $\mathscr{C}_{T}$ into itself for the topology of $\mathcal{X}.$
			 	\end{lem}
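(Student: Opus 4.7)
The plan is to combine the compactness from Lemma \ref{com} with the uniqueness of the decoupled linear problem \eqref{3.2}. Let $(\widetilde{\sigma}_n,\widetilde{\bf w}_n,\widetilde{\eta}_n)\to(\widetilde{\sigma},\widetilde{\bf w},\widetilde{\eta})$ in $\mathcal{X}$ with all terms in $\mathscr{C}_T$; by Lemma \ref{com} the limit belongs to $\mathscr{C}_T$ as well. Set $(\sigma_n,{\bf w}_n,\eta_n)=L(\widetilde{\sigma}_n,\widetilde{\bf w}_n,\widetilde{\eta}_n)$ and $(\sigma,{\bf w},\eta)=L(\widetilde{\sigma},\widetilde{\bf w},\widetilde{\eta})$, all in $\mathscr{C}_T$. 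To prove that the whole sequence $(\sigma_n,{\bf w}_n,\eta_n)$ converges to $(\sigma,{\bf w},\eta)$ in $\mathcal{X}$, a standard subsequence argument suffices: I will show that any $\mathcal{X}$-limit point of $(\sigma_n,{\bf w}_n,\eta_n)$ equals $(\sigma,{\bf w},\eta)$.

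So pick an arbitrary subsequence of $(\sigma_n,{\bf w}_n,\eta_n)$. Since $\mathscr{C}_T$ is compact in $\mathcal{X}$ (Lemma \ref{com}), a further subsequence converges in $\mathcal{X}$ to some $(\sigma^*,{\bf w}^*,\eta^*)\in\mathscr{C}_T$. As in the proof of Lemma \ref{com}, the uniform norm bounds \eqref{normbnd} on $\mathscr{C}_T$ allow one to upgrade this convergence to weak/weak-$*$ convergences in all the ambient higher regularity spaces appearing in \eqref{dofYi}--\eqref{Z23}, so that $(\sigma^*,{\bf w}^*,\eta^*)$ inherits the same regularity as elements of $Z_1^T\times Y_2^T\times Z_3^T$ and the initial/boundary conditions \eqref{tweq0}--\eqref{twt0} hold in the limit.

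The core step is to pass to the limit in the equations \eqref{3.2}. For the beam equation this is straightforward because the equation is linear in $\eta_n$ and one only needs convergence of $G_3(\widetilde{\sigma}_n,\widetilde{\bf w}_n,\widetilde{\eta}_n)$ in, say, $\mathcal{D}'(\Sigma^s_T)$. For the continuity and velocity equations one needs to identify the limits of $G_1(\widetilde{\sigma}_n,\widetilde{\bf w}_n,\widetilde{\eta}_n)$, $G_2(\widetilde{\sigma}_n,\widetilde{\bf w}_n,\widetilde{\eta}_n)$ and $\widetilde{W}(\widetilde{\bf w}_n,\widetilde{\eta}_n)$. The $\mathcal{X}$-topology delivers strong convergence of $\widetilde{\sigma}_n\to\widetilde{\sigma}$ in $C^0([0,T];H^1(\Omega))$, of $\widetilde{\bf w}_n\to\widetilde{\bf w}$ in $C^0([0,T];{\bf H}^1(\Omega))$ and of $(\widetilde{\eta}_n,\widetilde{\eta}_{n,t})\to(\widetilde{\eta},\widetilde{\eta}_t)$ in $C^0([0,T];H^2(\Gamma_s)\times H^1(\Gamma_s))$. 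Combined with the uniform bounds \eqref{tsB12}--\eqref{teB4} in the stronger spaces and Lemma \ref{lfpss}, each product appearing in \eqref{F123} and \eqref{1.22} passes to the limit (using that one factor converges strongly and the other is uniformly bounded in a higher space, or weakly convergent). The pointwise bounds \eqref{1etad0}--\eqref{tsgmM} guarantee uniform convergence of the rational factors $1/(1+\widetilde{\eta}_n)$ via the embedding $H^2(\Gamma_s)\hookrightarrow C^0(\Gamma_s)$, and likewise for $(\widetilde{\sigma}_n+\overline{\rho})^{\gamma-1}$. Terms involving second time derivatives (such as $\widetilde{\eta}_{n,tt}$ in the expression of $G_2$) require using weak convergence, obtained from the uniform bounds in $L^\infty(0,T;H^1(\Gamma_s))$ together with the strong convergence in $\mathcal{X}$ of the other factors.

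Once convergence of all the source terms is established in the appropriate topologies, the triple $(\sigma^*,{\bf w}^*,\eta^*)$ is seen to solve the linear decoupled system \eqref{3.2} with the fixed data $(\widetilde{\sigma},\widetilde{\bf w},\widetilde{\eta})$. By the uniqueness statements in Theorems \ref{t21}, \ref{t22} and \ref{t23}, necessarily $(\sigma^*,{\bf w}^*,\eta^*)=L(\widetilde{\sigma},\widetilde{\bf w},\widetilde{\eta})=(\sigma,{\bf w},\eta)$. As this holds for every $\mathcal{X}$-convergent subsequence, the whole sequence $(\sigma_n,{\bf w}_n,\eta_n)$ converges to $(\sigma,{\bf w},\eta)$ in $\mathcal{X}$, proving continuity of $L$. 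The main technical obstacle lies in the passage to the limit for the most singular terms of $G_2$, such as $P'(\widetilde{\sigma}_n+\overline{\rho})\nabla\widetilde{\sigma}_n$, $\widetilde{\eta}_n\,\widetilde{\rho}_n\,\widetilde{\bf w}_{n,t}$ and $\widetilde{\bf w}_{n,zz}z^2\widetilde{\eta}_{n,x}^2/(1+\widetilde{\eta}_n)$: here one must carefully split each factor as either converging strongly in $\mathcal{X}$, or converging weakly while remaining bounded in the higher regularity spaces built into $\mathscr{C}_T$, exactly as in the bookkeeping already carried out in Lemmas \ref{eog1}--\ref{eoWs}.
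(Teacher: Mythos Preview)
Your proposal is correct and follows essentially the same route as the paper: extract a convergent subsequence of the images via the compactness of $\mathscr{C}_T$ in $\mathcal{X}$, pass to the limit in the source terms $G_1,G_2,G_3$ and in $\widetilde{W}$ by combining the strong $\mathcal{X}$-convergence of $(\widetilde{\sigma}_n,\widetilde{\bf w}_n,\widetilde{\eta}_n)$ with the uniform higher-order bounds from $\mathscr{C}_T$, and then invoke uniqueness for the three decoupled linear problems to identify the limit with $L(\widetilde{\sigma},\widetilde{\bf w},\widetilde{\eta})$. The paper carries out the passage to the limit term by term (e.g.\ for $\widetilde{\bf w}_{n,zz}z^{2}\widetilde{\eta}_{n,x}^{2}/(1+\widetilde{\eta}_n)$ and $P'(\widetilde{\sigma}_n+\overline{\rho})\widetilde{\sigma}_{n,z}z\widetilde{\eta}_{n,x}$) exactly along the lines you indicate, and your explicit framing via the ``every subsequence has a further subsequence with the same limit'' device is the standard way to conclude convergence of the full sequence, which the paper leaves implicit.
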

			 	\begin{proof}
			 	 Suppose that $(\widetilde{\sigma}_{n},\widetilde{\bf w}_{n},\widetilde{\eta}_{n})\in \mathscr{C}_{T},$ converges to $(\widetilde{\sigma},\widetilde{\bf w},\widetilde{\eta})$ strongly in $\mathcal{X}.$ Then, according to Lemma \ref{com}, $(\widetilde{\sigma},\widetilde{\bf w},\widetilde{\eta})\in \mathscr{C}_{T}.$ We thus set $(\widehat{\sigma}_{n},\widehat{\bf w}_{n},\widehat{\eta}_{n})= L(\widetilde{\sigma}_{n},\widetilde{\bf w}_{n},\widetilde{\eta}_{n}),$ $(\widehat{\sigma},\widehat{\bf w},\widehat{\eta})= L(\widetilde{\sigma},\widetilde{\bf w},\widetilde{\eta}).$ Our goal is to show that $(\widehat{\sigma}_{n},\widehat{\bf w}_{n},\widehat{\eta}_{n})$ strongly converges to $(\widehat{\sigma},\widehat{\bf w},\widehat{\eta})$ in $\mathcal{X}.$ Using that $(\widehat{\sigma}_{n},\widehat{\bf w}_{n},\widehat{\eta}_{n})$ belongs to $\mathscr{C}_{T}$ (see Lemma \ref{inclusion}) we get that there exists a triplet $(\overline{\sigma},\overline{\bf w},\overline{\eta})$ such that up to a subsequence 
			 	 \begin{equation}\label{weakconv}
			 	 \begin{array}{lll}
			 	 & \widehat{\sigma}_{n}\stackrel{\ast}{\rightharpoonup}\overline{\sigma}&\,\,\mbox{in}\,\, L^{\infty}(0,T;H^{2}(\Omega))\cap W^{1,\infty}(0,T;H^{1}(\Omega)),\\
			 	 & \widehat{\bf w}_{n}\rightharpoonup \overline{\bf w}&\,\,\mbox{in}\,\, L^{2}(0,T;{\bf H}^{3}(\Omega))\cap H^{1}(0,T;{\bf H}^{2}(\Omega))\cap H^{2}(0,T;{\bf L}^{2}(\Omega)),\\
			 	 & \widehat{\bf w}_{n}\stackrel{\ast}{\rightharpoonup} \overline{\bf w}&\,\,\mbox{in}\,\, L^{\infty}(0,T;{\bf H}^{2}(\Omega))\cap W^{1,\infty}(0,T;{\bf H}^{1}(\Omega)),\\
			 	 & \widehat{\eta}_{n}\rightharpoonup \overline{\eta}&\,\,\mbox{in}\,\, H^{1}(0,T;H^{4}(\Gamma_{s}))\cap H^{2}(0,T;H^{2}(\Gamma_{s}))\cap H^{3}(0,T;L^{2}(\Gamma_{s})),\\
			 	 & \widehat{\eta}_{n}\stackrel{\ast}{\rightharpoonup} \overline{\eta}&\,\,\mbox{in}\,\, L^{\infty}(0,T;H^{9/2}(\Gamma_{s}))\cap W^{1,\infty}(0,T;H^{3}(\Gamma_{s}))\cap W^{2,\infty}(0,T;H^{1}(\Gamma_{s})).
			 	 \end{array}
			 	 \end{equation}
			 	  The compactness result proved in Lemma \ref{com} provides the strong convergence in $\mathcal{X}$ i.e, up to a subsequence, $(\widehat{\sigma}_{n},\widehat{\bf w}_{n},\widehat{\eta}_{n})$ converges strongly in $\mathcal{X}$ to $(\overline{\sigma},\overline{\bf w},\overline{\eta}).$ It is clear that in order to prove that the map $L$ is continuous it is enough to show that $(\overline{\sigma},\overline{\bf w},\overline{\eta})=(\widehat{\sigma},\widehat{\bf w},\widehat{\eta}).$ This will be verified in the following steps.\\
			 	$(i)$ We first claim that $G_{2}(\widetilde{\sigma}_{n},\widetilde{\bf w}_{n},\widetilde{\eta}_{n})$ converges weakly to $G_{2}(\widetilde{\sigma},\widetilde{\bf w},\widetilde{\eta})$ in $L^{2}(0,T;{\bf L}^{2}(\Omega)).$\\
			 	 Since $(\widetilde{\sigma}_{n},\widetilde{\bf w}_{n},\widetilde{\eta}_{n})$ belongs to $\mathscr{C}_{T}$ and we have fixed $B_{i}$ (for all $1\leqslant i\leqslant 4$) and $T,$ one can use Lemma \ref{eog2} to show that $\|G_{2}(\widetilde{\sigma}_{n},\widetilde{\bf w}_{n},\widetilde{\eta}_{n})\|_{L^{2}(0,T;L^{2}(\Omega))}$ is uniformly bounded. Hence, to prove our claim it is enough to show that $G_{2}(\widetilde{\sigma}_{n},\widetilde{\bf w}_{n},\widetilde{\eta}_{n})$ converges to $G_{2}(\widetilde{\sigma},\widetilde{\bf w},\widetilde{\eta})$ in $\mathcal{D}'(Q_{T})$ ($\mathcal{D}'(Q_{T})$ is the space of distributions on $Q_{T}$).\\
			 	Let us consider the term $\displaystyle\frac{\widetilde{\bf w}_{n,zz}z^{2}\widetilde{\eta}^{2}_{n,x}}{(1+\widetilde{\eta}_{n})}.$
			 	From the uniform norm bound over $\|\widetilde{\bf w}_{n,zz}\|_{L^{2}(0,T;{\bf H}^{1}(\Omega))}$ we get that $\widetilde{\bf w}_{n,zz}$ converges weakly in $L^{2}(0,T;{\bf H}^{1}(\Omega))$ to $\widetilde{\bf w}_{zz}.$ Since $\widetilde{\eta}_{n}$ strongly converges to $\widetilde{\eta}$ in $C^{0}([0,T];H^{2}(\Gamma_{s}))$ and both $\widetilde{\eta}_{n}$ and $\widetilde{\eta}$ satisfy \eqref{1etad0},   $\displaystyle\frac{1}{(1+\widetilde{\eta}_{n})}$ and $\widetilde{\eta}_{n,x}$ converge strongly to $\displaystyle\frac{1}{(1+\widetilde{\eta})}$ and $\widetilde{\eta}_{x}$ respectively in the spaces $C^{0}([0,T];H^{2}(\Gamma_{s}))$ and $C^{0}([0,T];H^{1}(\Gamma_{s})).$
			 	Hence one gets in particular the strong convergence of $\widetilde{\eta}^{2}_{n,x}$ to $\widetilde{\eta}^{2}_{x}$ in the space  $C^{0}([0,T];L^{2}(\Gamma_{s})).$ This implies that $\displaystyle\frac{\widetilde{\bf w}_{n,zz}z^{2}\widetilde{\eta}^{2}_{n,x}}{(1+\widetilde{\eta}_{n})}$ converges to $\displaystyle\frac{\widetilde{\bf w}_{zz}z^{2}\widetilde{\eta}^{2}_{x}}{(1+\widetilde{\eta})}$ weakly in $L^{2}(0,T;L^{1}(\Omega))$ and hence particularly in the space $\mathcal{D}'(Q_{T}).$\\
			 	Now we consider the term $P'\widetilde{\sigma}_{n,z}z\widetilde{\eta}_{n,x}\vec{e}_{1}=(\widetilde{\sigma}_{n}+\overline{\rho})^{\gamma-1}\widetilde{\sigma}_{n,z}z\widetilde{\eta}_{n,x}\vec{e}_{1}.$ Since $\|(\widetilde{\sigma}_{n}+\overline{\rho})\|_{C^{0}(0,T;H^{2}(\Omega))}$ is uniformly bounded so is $\|(\widetilde{\sigma}_{n}+\overline{\rho})^{\gamma-1}\|_{C^{0}(0,T;H^{2}(\Omega))}$ and hence $(\widetilde{\sigma}_{n}+\overline{\rho})^{\gamma-1}$ converges weakly to $(\widetilde{\sigma}+\overline{\rho})^{\gamma-1}$ in $L^{2}(0,T;H^{2}(\Omega)).$ We also have that $\widetilde{\sigma}_{n,z}$ converges strongly to $\widetilde{\sigma}_{z}$ in $C^{0}([0,T];L^{2}(\Omega)).$ Hence $(\widetilde{\sigma}_{n}+\overline{\rho})^{\gamma-1}\widetilde{\sigma}_{n,z}$ converges weakly to $(\widetilde{\sigma}+\overline{\rho})^{\gamma-1}\widetilde{\sigma}_{z}$ in $L^{2}(0,T;L^{2}(\Omega)).$ Now the strong convergence of $\widetilde{\eta}_{n,x}$ to $\widetilde{\eta}_{x}$ in $C^{0}([0,T];H^{1}(\Gamma_{s}))$ furnish that $(\widetilde{\sigma}_{n}+\overline{\rho})^{\gamma-1}\widetilde{\sigma}_{n,z}z\widetilde{\eta}_{n,x}$ weakly converges to $(\widetilde{\sigma}+\overline{\rho})^{\gamma-1}\widetilde{\sigma}_{z}z\widetilde{\eta}_{x}$ in $L^{2}(0,T;L^{1}(\Omega)).$ Hence $(\widetilde{\sigma}_{n}+\overline{\rho})^{\gamma-1}\widetilde{\sigma}_{n,z}z\widetilde{\eta}_{n,x}\vec{e}_{1}$ converges to $(\widetilde{\sigma}+\overline{\rho})^{\gamma-1}\widetilde{\sigma}_{z}z\widetilde{\eta}_{x}\vec{e}_{1}$ in the space $\mathcal{D}'(Q_{T}).$\\
			 	We can apply similar line of arguments to prove that $G_{2}(\widetilde{\sigma}_{n},\widetilde{\bf w}_{n},\widetilde{\eta}_{n})$ converges to $G_{2}(\widetilde{\sigma},\widetilde{\bf w},\widetilde{\eta})$ in $\mathcal{D}'(Q_{T}).$ Hence we have proved that  $G_{2}(\widetilde{\sigma}_{n},\widetilde{\bf w}_{n},\widetilde{\eta}_{n})$ converges to $G_{2}(\widetilde{\sigma},\widetilde{\bf w},\widetilde{\eta})$ weakly in $L^{2}(0,T;{\bf L}^{2}(\Omega)).$
			 	\newline
			 	Also observe that $(\widetilde{\sigma}_{n}+\overline{\rho})$ converges strongly to $(\widetilde{\sigma}+\overline{\rho})$ in $C^{0}([0,T];H^{1}(\Omega))$ and $\widehat{\bf w}_{n,t},$ $(-\mu\Delta-(\mu'+\mu)\nabla(\mbox{div}))\widehat{\bf w}_{n}$ converge up to a subsequence weakly to $\displaystyle\overline{\bf w}_{t}$ and $\displaystyle(-\mu\Delta-(\mu'+\mu)\nabla(\mbox{div}))\overline{\bf w}$ respectively in the spaces $L^{2}(0,T;{\bf H}^{2}(\Omega))$ and $L^{2}(0,T;{\bf H}^{1}(\Omega)).$ Hence up to a subsequence one obtains in particular the following convergence
			 	$$(\widetilde\sigma_{n}+\overline{\rho})\widehat{\bf w}_{n,t}-\mu\Delta\widehat{\bf w}_{n}-(\mu'+\mu)\nabla(\mbox{div}\widehat {\bf w}_{n})\rightharpoonup (\widetilde\sigma+\overline{\rho})\overline{\bf w}_{t}-\mu\Delta\overline{\bf w}-(\mu'+\mu)\nabla(\mbox{div}\overline{\bf w})\quad\mbox{in}\quad L^{2}(0,T;{\bf L}^{2}(\Omega)).$$ 
			 	Now consider \eqref{3.2}$_{2}$ with $(\widetilde{\sigma},\widetilde{\bf w},\widetilde{\eta})$ and ${\bf w}$ replaced respectively by $(\widetilde{\sigma}_{n},\widetilde{\bf w}_{n},\widetilde{\eta}_{n})$ and $\widehat{\bf w}_{n}.$ The weak convergences discussed so far allow to pass to the limits in both sides of this equation. So using the uniqueness of weak solution for the linear problem \eqref{2.1.1} we conclude that $\overline{\bf w}=\widehat{\bf w}.$\\
			 	$(ii)$ Now we claim that $G_{1}(\widetilde{\sigma}_{n},\widetilde{\bf w}_{n},\widetilde{\eta}_{n})$ converges weakly to $G_{1}(\widetilde{\sigma},\widetilde{\bf w},\widetilde{\eta})$ in $L^{2}(0,T;L^{2}(\Omega)).$\\ 
			 	Let us consider the term $\displaystyle\frac{1}{(1+\widetilde{\eta}_{n})}(\widetilde{ w}_{n})_{1,z}z\widetilde{\eta}_{n,x}(\widetilde{\sigma}_{n}+\overline{\rho}).$ We already know that $\displaystyle\frac{1}{(1+\widetilde{\eta}_{n})}$ and $\widetilde{\eta}_{n,x}$ converge strongly to $\displaystyle\frac{1}{(1+\widetilde{\eta})}$ and $\widetilde{\eta}_{x}$ respectively in the spaces $C^{0}([0,T];H^{2}(\Gamma_{s})$ and $C^{0}([0,T];H^{1}(\Gamma_{s})).$ One also observes that $(\widetilde{w}_{n})_{1,z}$ weakly converges to $\widetilde{w}_{1,z}$ in $L^{2}(0,T;{H}^{2}(\Omega))$ (since $\widetilde{\bf w}_{n}\rightharpoonup \widetilde{\bf w}$ in $L^{2}(0,T;{\bf H}^{3}(\Omega))$). Finally the strong convergence of $(\widetilde{\sigma}_{n}+\overline{\rho})$ to $(\widetilde{\sigma}+\overline{\rho})$ in $C^{0}([0,T];H^{1}(\Omega))$ furnish the weak convergence of $\displaystyle\frac{1}{(1+\widetilde{\eta}_{n})}(\widetilde{ w}_{n})_{1,z}z\widetilde{\eta}_{n,x}(\widetilde{\sigma}_{n}+\overline{\rho})$ to $\displaystyle\frac{1}{(1+\widetilde{\eta})}(\widetilde{ w})_{1,z}z\widetilde{\eta}_{x}(\widetilde{\sigma}+\overline{\rho})$ in $L^{2}(0,T;L^{2}(\Omega)).$ We can apply similar arguments for other terms in the expression of $G_{1}(\widetilde{\sigma},\widetilde{\bf w},\widetilde{\eta})$ in order to prove the weak convergence of $G_{1}(\widetilde{\sigma}_{n},\widetilde{\bf w}_{n},\widetilde{\eta}_{n})$ to $G_{1}(\widetilde{\sigma},\widetilde{\bf w},\widetilde{\eta})$ in $L^{2}(0,T;L^{2}(\Omega)).$\\
			 	 We further observe that $\nabla\widehat{\sigma}_{n}$ strongly converges to $\nabla\overline{\sigma}$ in $C^{0}([0,T];L^{2}(\Omega)).$ Since $(\widetilde{ w}_{n})_{1}$ weakly converges to $\widetilde{w}_{1}$ in $L^{2}(0,T;H^{3}(\Omega)),$ $(\widetilde{\eta}_{n})_{x}$ strongly converges to $\widetilde{\eta}_{x}$ in $L^{\infty}(\Sigma^{s}_{T})$ (because $(\widetilde{\eta}_{n})_{x}$ strongly converges to $\widetilde{\eta}_{x}$ in $C^{0}([0,T];H^{1}(\Gamma_{s}))$ and the embedding $H^{1}(\Gamma_{s})\hookrightarrow L^{\infty}(\Gamma_{s})$ is continuous) and $\frac{1}{(1+\widetilde{\eta}_{n})}$ strongly converges to $\frac{1}{(1+\widetilde{\eta})}$ in $C^{0}([0,T];H^{2}(\Gamma_{s}))$, $\frac{1}{(1+\widetilde{\eta}_{n})}(\widetilde{w}_{n})_{1}z(\widetilde{\eta}_{n})_{x}(\widehat{\sigma}_{n})_{z}$ weakly converges to $\frac{1}{(1+\widetilde{\eta})}\widetilde{w}_{1}z\widetilde{\eta}_{x}\widehat{\sigma}_{z}$ in $L^{2}(0,T;L^{2}(\Omega)).$ Besides, up to a subsequence $(\widehat{\sigma}_{n})_{t}$ weakly converges to $\overline{\sigma}_{t}$ in $L^{2}(0,T;L^{2}(\Omega)).$ Hence up to a subsequence we have 
			    $$(\widehat{\sigma}_{n})_{t}+\begin{bmatrix}
			    (\widetilde{w}_{n})_{1}\\
			    \frac{1}{(1+\widetilde{\eta}_{n})}((\widetilde{w}_{n})_{2}-(\widetilde{ w}_{n})_{1}z(\widetilde{\eta}_{n})_{x})
			    \end{bmatrix}\cdot\nabla\widehat{\sigma}_{n}\rightharpoonup {\overline{\sigma}_{t}}+\begin{bmatrix}
			    \widetilde{ w}_{1}\\
			    \frac{1}{(1+\widetilde{\eta})}(\widetilde{w}_{2}-\widetilde{ w}_{1}z\widetilde\eta_{x})
			    \end{bmatrix}
			    \cdot \nabla\overline{\sigma}\quad\mbox{in}\quad L^{2}(0,T;L^{2}(\Omega)).$$
			 	Now consider \eqref{3.2}$_{1}$ with $(\widetilde{\sigma},\widetilde{\bf w},\widetilde{\eta})$ and ${\sigma}$ replaced respectively by $(\widetilde{\sigma}_{n},\widetilde{\bf w}_{n},\widetilde{\eta}_{n})$ and $\widehat{\sigma}_{n}.$ The weak type convergences discussed so far allow to pass to the limits in both sides of this equation. Hence from uniqueness of weak solution of the linear problem \eqref{2.2.1} we conclude that $\overline{\sigma}=\widehat{\sigma}.$\\
			 	$(iii)$ One can use similar line of arguments as used so far to show that $G_{3}{(\widetilde{\sigma}_{n},\widetilde{\bf w}_{n},\widetilde{\eta}_{n})}$ converges weakly to $G_{3}{(\widetilde{\sigma},\widetilde{\bf w},\widetilde{\eta})}$ in $L^{2}(0,T;L^{2}(\Gamma_{s})).$ Using the norm bounds of $\widehat{\eta}_{n}$ (since $(\widehat{\sigma}_{n},\widehat{\bf w}_{n},\widehat{\eta}_{n})\in \mathscr{C}_{T}$) we can prove that up to a subsequence the left hand side of \eqref{3.2}$_{6}$ with $\eta$ replaced by $\widehat{\eta}_{n}$ converges weakly to $$\displaystyle\overline{\eta}_{tt}-\beta \overline{\eta}_{xx}-  \delta\overline{\eta}_{txx}+\alpha\overline{\eta}_{xxxx}$$
			 	in $L^{2}(0,T;L^{2}(\Gamma_{s})).$ Now the uniqueness of weak solution to the problem \eqref{2.3.1} furnishes $\overline{\eta}=\widehat{\eta}.$
			 	Hence the proof of Lemma \ref{cont} is complete.
			 	\end{proof}
			 	\subsection{Conclusion}\label{conc}
			    The following properties hold\\
			 	(i) The convex set $\mathscr{C}_{T}$ is non-empty (Lemma \ref{nonempty}) and is a compact subset of $\mathcal{X}$ (Lemma \ref{com}).\\
			 	(ii) The map $L$, defined in \eqref{welposL}, is continuous on $\mathscr{C}_{T}$ in the topology of $\mathcal{X}$ (Lemma \ref{cont}).
				\\
				(iii) The map $L$ maps $\mathscr{C}_{T}$ to itself (Lemma \ref{inclusion}).\\ 
			 	Thus, all the assumptions of Schauder fixed point theorem are satisfied by the map $L$ on $\mathscr{C}_{T},$ endowed with the topology of $\mathcal{X}.$ Therefore, Schauder fixed point theorem yields a fixed point $(\sigma_{f},{\bf w}_{f},\eta_{f})$ of the map $L$ in $\mathscr{C}_{T}.$ From the definition of the map $L$, one has $(\sigma_{f},{\bf w}_{f},\eta_{f})\in Z^{T}_{1}\times Y^{T}_{2}\times Z^{T}_{3}.$ Hence we have the following time continuities (since still now one only has the regularities \eqref{eg1} of $G_{1}(\sigma_{f},{\bf w}_{f},\eta_{f}),$ \eqref{eg2} of $G_{2}(\sigma_{f},{\bf w}_{f},\eta_{f})$ and \eqref{eg3} of $G_{3}(\sigma_{f},{\bf w}_{f},\eta_{f})$)
			 	\begin{equation}\label{timecont}
			 	\begin{array}{l}
			 	{\sigma}_{f}\in C^{0}([0,T];H^{2}(\Omega)),\\
			 	{\bf w}_{f}\in C^{0}([0,T];{\bf H}^{5/2}(\Omega))\cap C^{1}([0,T];{\bf H}^{1}(\Omega)),\\
			 	\eta_{f} \in C^{0}([0,T];H^{4}(\Gamma_{s}))\cap C^{1}([0,T];H^{3}(\Gamma_{s}))\cap C^{2}([0,T];H^{1}(\Gamma_{s})).
			 	\end{array}
			 	\end{equation}
			 	The regularities \eqref{timecont} can be used to further check that $G_{1}(\sigma_{f},{\bf w}_{f},\eta_{f})\in C^{0}([0,T];H^{1}(\Omega))$ and $G_{3}(\sigma_{f},{\bf w}_{f},\eta_{f})\in C^{0}([0,T];H^{1/2}(\Gamma_{s})).$ Hence we use Corollary \ref{dencor} and the Corollary \ref{timebeam} to obtain the following
			 	\begin{equation}\nonumber
			 	\begin{array}{l}
			 	(\sigma_{f})_{t}\in C^{0}([0,T];H^{1}(\Omega))\,\,\mbox{and}\,\, \eta_{f}\in C^{0}([0,T];H^{9/2}(\Gamma_{s})).
			 	\end{array}
			 	\end{equation} 
			 	 Hence, $(\sigma_{f},{\bf w}_{f},\eta_{f})\in Y^{T}_{1}\times Y^{T}_{2}\times Y^{T}_{3}.$ The trajectory $(\sigma_{f},{\bf w}_{f},\eta_{f})$ solves the nonlinear problem \eqref{chdb} in $Y^{T}_{1}\times Y^{T}_{2}\times Y^{T}_{3}.$ Consequently the system \eqref{1.21} admits a solution. This further implies that the original system \eqref{1.1}-\eqref{1.2}-\eqref{1.3} admits a strong solution in sense of the Definition \ref{doss}. Finally the proof of Theorem \ref{main} is complete.
			 	\end{proof}

\bibliographystyle{plain}

\end{document}